\documentclass[11pt]{article}
\usepackage{graphicx}
\usepackage{amsmath}
\usepackage{amsfonts}
\usepackage{amssymb}
\usepackage{amsthm}
\usepackage[T1]{fontenc}
\usepackage{url}
\usepackage{color}
\usepackage{defs}
\usepackage{fonts}
\usepackage{esint}
\usepackage{dsfont}
\usepackage[margin=1in]{geometry}
\usepackage{amssymb,bm}
\usepackage{amsmath}
\usepackage{amsthm}
\usepackage{graphicx}
\usepackage[active]{srcltx} 
\usepackage{hyperref}
\hypersetup{pdfborder=0 0 0}

\newtheorem{theorem}{{\sc Theorem}}[section]
\newtheorem{proposition}[theorem]{{\sc Proposition}}

\newtheorem{lemma}[theorem]{{\sc Lemma}}
\newtheorem{corollary}[theorem]{Corollary}

\def\XXint#1#2#3{{\setbox0=\hbox{$#1{#2#3}{\int}$ }
\vcenter{\hbox{$#2#3$ }}\kern-.6\wd0}}

\bmdefine\BGa{\alpha}
\bmdefine\BGb{\beta}
\bmdefine\BGd{\delta}
\bmdefine\BGe{\epsilon}
\bmdefine\BGve{\varepsilon}
\bmdefine\BGf{\phi}
\bmdefine\BGvf{\varphi}
\bmdefine\BGg{\gamma}
\bmdefine\BGc{\chi}
\bmdefine\BGi{\iota}
\bmdefine\BGk{\kappa}
\bmdefine\BGl{\lambda}
\bmdefine\BGn{\eta}
\bmdefine\BGm{\mu}
\bmdefine\BGv{\nu}
\bmdefine\BGp{\pi}
\bmdefine\BGth{\theta}
\bmdefine\BGvth{\vartheta}
\bmdefine\BGr{\rho}
\bmdefine\BGvr{\varrho}
\bmdefine\BGs{\sigma}
\bmdefine\BGvs{\varsigma}
\bmdefine\BGt{\tau}
\bmdefine\BGj{\tau}
\bmdefine\BGu{\upsilon}
\bmdefine\BGo{\omega}
\bmdefine\BGx{\xi}
\bmdefine\BGy{\psi}
\bmdefine\BGz{\zeta}
\bmdefine\BGD{\Delta}
\bmdefine\BGF{\Phi}
\bmdefine\BGG{\Gamma}
\bmdefine\BGL{\Lambda}
\bmdefine\BGP{\Pi}
\bmdefine\BGT{\Theta}
\bmdefine\BGS{\Sigma}
\bmdefine\BGU{\Upsilon}
\bmdefine\BGO{\Omega}
\bmdefine\BGX{\Xi}
\bmdefine\BGY{\Psi}

\bmdefine\BCA{{\mathcal A}}
\bmdefine\BCB{{\mathcal B}}
\bmdefine\BCC{{\mathcal C}}
\bmdefine\BCD{{\mathcal D}}
\bmdefine\BCE{{\mathcal E}}
\bmdefine\BCF{{\mathcal F}}
\bmdefine\BCG{{\mathcal G}}
\bmdefine\BCH{{\mathcal H}}
\bmdefine\BCI{{\mathcal I}}
\bmdefine\BCJ{{\mathcal J}}
\bmdefine\BCK{{\mathcal K}}
\bmdefine\BCL{{\mathcal L}}
\bmdefine\BCM{{\mathcal M}}
\bmdefine\BCN{{\mathcal N}}
\bmdefine\BCO{{\mathcal O}}
\bmdefine\BCP{{\mathcal P}}
\bmdefine\BCQ{{\mathcal Q}}
\bmdefine\BCR{{\mathcal R}}
\bmdefine\BCS{{\mathcal S}}
\bmdefine\BCT{{\mathcal T}}
\bmdefine\BCU{{\mathcal U}}
\bmdefine\BCV{{\mathcal V}}
\bmdefine\BCW{{\mathcal W}}
\bmdefine\BCX{{\mathcal X}}
\bmdefine\BCY{{\mathcal Y}}
\bmdefine\BCZ{{\mathcal Z}}

\bmdefine\Bzr{ 0}
\bmdefine\Ba{ a}
\bmdefine\Bb{ b}
\bmdefine\Bc{ c}
\bmdefine\Bd{ d}
\bmdefine\Be{ e}
\bmdefine\Bf{ f}
\bmdefine\Bg{ g}
\bmdefine\Bh{ h}
\bmdefine\Bi{ i}
\bmdefine\Bj{ j}
\bmdefine\Bk{ k}
\bmdefine\Bl{ l}
\bmdefine\Bm{ m}
\bmdefine\Bn{ n}
\bmdefine\Bo{ o}
\bmdefine\Bp{ p}
\bmdefine\Bq{ q}
\bmdefine\Br{ r}
\bmdefine\Bs{ s}
\bmdefine\Bt{ t}
\bmdefine\Bu{ u}
\bmdefine\Bv{ v}
\bmdefine\Bw{ w}
\bmdefine\Bx{ x}
\bmdefine\By{ y}
\bmdefine\Bz{ z}
\bmdefine\BA{ A}
\bmdefine\BB{ B}
\bmdefine\BC{ C}
\bmdefine\BD{ D}
\bmdefine\BE{ E}
\bmdefine\BF{ F}
\bmdefine\BG{ G}
\bmdefine\BH{ H}
\bmdefine\BI{ I}
\bmdefine\BJ{ J}
\bmdefine\BK{ K}
\bmdefine\BL{ L}
\bmdefine\BM{ M}
\bmdefine\BN{ N}
\bmdefine\BO{ O}
\bmdefine\BP{ P}
\bmdefine\BQ{ Q}
\bmdefine\BR{ R}
\bmdefine\BS{ S}
\bmdefine\BT{ T}
\bmdefine\BU{ U}
\bmdefine\BV{ V}
\bmdefine\BW{ W}
\bmdefine\BX{ X}
\bmdefine\BY{ Y}
\bmdefine\BZ{ Z}

\usepackage{accents}


\usepackage{wrapfig}
\usepackage{tikz}
\usepackage{xcolor}

\numberwithin{equation}{section}

\begin{document}

\title{
Consistency in fractional Korn's inequalities on Lipschitz domains
}
\author{Tadele Mengesha\footnote{Department of Mathematics, University of Tennessee, Knoxville, TN 37996, USA. Email: \texttt{mengesha@utk.edu}} \ \and 
James M. Scott\footnote{Department of Mathematics and Statistics, Auburn University, Auburn, AL 36849, USA. Email: \texttt{james.m.scott@auburn.edu}}
}
\date{\today}
\maketitle

\begin{abstract}
    We investigate fractional analogs of Korn's inequality on bounded Lipschitz domains in several contexts. Constrained and unconstrained inequalities are established; constraints include quotient spaces and spaces incorporating various classes of boundary conditions or volume constraints. We also expand the parameters of integrability for which the inequalities hold. Specifically, a fractional Korn's first unconstrained inequality holds for vector fields in the fractional Sobolev space $W^{s,p}(\Omega;\bbR^n)$ for all $p \in [1,\infty]$ and all $s \in (0,1)$.
    In the cases $p = 1$ and $p = \infty$, the inequality has no local analog. We quantify this by investigating the consistency of the inequalities as the differentiability parameter $s \to 1$; the universal constant in the inequality remains stable for $p \in (1,\infty)$ but degenerates for $p \in \{1,\infty\}$.
    We introduce an explicit $L^p$-bounded projection operator onto infinitesimal rigid motions as a key tool to quantitatively strengthen the inequality.
\end{abstract}

\noindent \textbf{Keywords:} Fractional Korn inequality, Fractional Sobolev spaces, Nonlocal vector fields, Lipschitz domains, Rigid motions, Peridynamics. \\
\noindent \textbf{2020 Mathematics Subject Classification:} 26D10, 46E35, 35R11, 74B99.

\section{Introduction}\label{sec:intro}
In this paper, we establish fractional analogs of Korn's first and second inequalities in arbitrary bounded Lipschitz domains. 
Classically, Korn's inequalities bound the $L^p$-norm of the full gradient tensor $\nabla \bu$ of a Sobolev vector field $\bu: \Omega \to \mathbb{R}^n$ by the $L^p$-norm of the symmetric part of its gradient tensor, 
\begin{equation*}
    e(\bu)(\bx) := \frac{1}{2} \left(\nabla \bu(\bx) + \nabla \bu^\top(\bx)\right),
\end{equation*}
together with the $L^p$-norm of the vector field itself.

For $1 < p < \infty$, Korn's first (unconstrained) inequality \cite{Korn1,Korn2,Kond-Oleinik} states that there exists a constant $C = C(n,p,\Omega) > 0$ such that
\begin{equation}\label{eq:Intro-Korn1stCase2}
    \inf_{\bspsi \in \cR} \|\bu - \bspsi \|_{W^{1,p}(\Omega)} \leq C \|e(\bu)\|_{L^p(\Omega)} \quad \text{for all } \bu \in W^{1,p}(\Omega;\mathbb{R}^n),
\end{equation}
where 
$\cR$ denotes the space of infinitesimal rigid motions
\begin{equation*}
    \mathcal{R} := \{ \bspsi(\bx) = \mathbf{A}\bx + \mathbf{b} : \bA \in \bbR^{n \times n}, \, \mathbf{b} \in \mathbb{R}^n, \, \mathbf{A} + \bA^T = \bzero \}.
\end{equation*}
Consider any continuous projection map $\Pi$ from $L^p(\Omega;\bbR^n)$ onto $\cR$ (see \cite{Ciarlet2025} for the existence of a similar map); a consequence of \eqref{eq:Intro-Korn1stCase2} is
\begin{equation}\label{eq:Intro-Korn1stCase2:Proj}
    \vnorm{\bu - \Pi[\bu]}_{W^{1,p}(\Omega)}
    \leq C(n,p,\Omega) \|e(\bu)\|_{L^p(\Omega)} \quad \text{for all } \bu \in W^{1,p}(\Omega;\mathbb{R}^n).
\end{equation}
Using the continuity of $\Pi$, 
\eqref{eq:Intro-Korn1stCase2:Proj} leads to Korn's second inequality: there exists a constant $C = C(n,p,\Omega) > 0$ such that
\begin{equation}\label{eq:Intro-Korn2nd}
    \|\nabla \bu\|_{L^p(\Omega)} \leq C \left( \|e(\bu)\|_{L^p(\Omega)} + \|\bu\|_{L^p(\Omega)} \right) \quad \text{for all } \bu \in \mathcal{X}^{1,p}(\Omega),
\end{equation}
where $\mathcal{X}^{1,p}(\Omega) := \{ \bu \in L^p(\Omega;\mathbb{R}^n) : e(\bu) \in L^p(\Omega;\mathbb{R}^{n \times n}) \}$.
Inequality \eqref{eq:Intro-Korn2nd} establishes the equality $\mathcal{X}^{1,p}(\Omega) = W^{1,p}(\Omega;\mathbb{R}^n)$.
From \eqref{eq:Intro-Korn2nd}, a standard compactness argument yields Korn's first (constrained) inequality:
\begin{equation}\label{eq:Intro-Korn1stCase1}
    \|\bu\|_{W^{1,p}(\Omega)} \leq C \|e(\bu)\|_{L^p(\Omega)} \quad \text{for all } \bu \in \mathring{W}^{1,p}(\Omega;\mathbb{R}^n),
\end{equation}
where $\mathring{W}^{1,p}(\Omega;\mathbb{R}^n)$ is any closed subspace of $W^{1,p}(\Omega;\mathbb{R}^n)$ that has trivial intersection with $\cR$.
These classical inequalities play a fundamental role in continuum mechanics, linearized elasticity, and fluid dynamics \cite{Ciarlet,Kond-Oleinik,Temam-Miran}.
For details on the equivalence of \eqref{eq:Intro-Korn1stCase2}-\eqref{eq:Intro-Korn2nd} see \cite{Ciarlet2025}.


A ``fractional analog'' of these inequalities refers to estimates of this type for vector fields in the fractional Sobolev space $W^{s,p}(\Omega;\mathbb{R}^n)$ for $s \in (0,1)$. A vector field $\bu \in L^p(\Omega;\mathbb{R}^n)$ belongs to $W^{s,p}(\Omega;\mathbb{R}^n)$, $p < \infty$, if and only if its Gagliardo seminorm is finite:
\begin{equation*}
    [\bu]_{W^{s,p}(\Omega;\mathbb{R}^n)}^p = \int_{\Omega}\int_{\Omega} \frac{|\bu(\bx) - \bu(\by)|^p}{|\bx - \by|^{n+ps}} \, \mathrm{d}\by \, \mathrm{d}\bx < \infty. 
\end{equation*}
The space $W^{s,p}(\Omega;\mathbb{R}^n)$ is a Banach space equipped with the norm $\|\bu\|_{W^{s,p}(\Omega)}^p = \|\bu\|_{L^p(\Omega)}^p + [\bu]_{W^{s,p}(\Omega)}^p$. 
Locally, for a smooth vector field the seminorm behaves as the integrable function $|\bx-\by|^{-d-sp+p}$ multiplied by the quantity
\begin{equation*}
    \frac{\bu(\by)-\bu(\bx)}{|\by-\bx|} = \grad \bu(\bx) \cdot \frac{\by-\bx}{|\by-\bx|} + \cO(|\by-\bx|) \quad \text{ as } \by \to \bx.
\end{equation*}
Indeed, the rescaled seminorm $(1-s)^{\frac{1}{p}} [\bu]_{W^{s,p}(\Omega)}$ converges as $s \to 1$ to the $W^{1,p}$ seminorm $\vnorm{\grad \bu}_{L^p(\Omega)}$; see \cite{BourgainBrezisMironescu2001}.

An analogous characterization of $e(\bu)$ arises by projecting the difference quotient $\frac{\bu(\by) - \bu(\bx)}{\vert{}\by - \bx\vert{}}$ along unit vector $\frac{\by - \bx}{\vert{}\by - \bx\vert{}}$. Specifically, Taylor expansion yields
\begin{equation*}
\frac{\bu(\by) - \bu(\bx)}{|\by - \bx|} \cdot \frac{\by - \bx}{|\by - \bx|} = e(\bu)(\bx) \frac{\by - \bx}{|\by - \bx|} \cdot \frac{\by - \bx}{|\by - \bx|} + \mathcal{O}(|\by - \bx|) \quad \text{as } \by \to \bx.
\end{equation*}
This directional projection appears in applications where it serves as a nonlocal measure of linearized strain and forms a core building block of the elastic energies studied in linearized peridynamics \cite{Silling2000,Silling2007,Silling2010}. Our interest in norms based on these projected difference quotients is therefore driven by the need to understand the coercivity and fundamental analytical properties of the associated energy spaces. 

{
To that end, we introduce the analogous Gagliardo seminorm given by
\begin{equation}\label{eq:NonlocalSymSeminormDef}
    [\bu]_{\mathcal{X}^{s,p}(\Omega)}^p := \int_{\Omega} \int_{\Omega} \frac{\left| (\bu(\by) - \bu(\bx)) \cdot \frac{\by - \bx}{|\by - \bx|} \right|^p}{|\by - \bx|^{n+sp}} \, \mathrm{d}\by \, \mathrm{d}\bx,
\end{equation}
and the associated Banach space is
\begin{equation}
    \mathcal{X}^{s,p}(\Omega) := \left\{ \bu \in L^p(\Omega;\mathbb{R}^n) : [\bu]_{\mathcal{X}^{s,p}(\Omega)} < \infty \right\},
\end{equation}
equipped with the norm $\|\bu\|_{\mathcal{X}^{s,p}(\Omega)}^p := \|\bu\|_{L^p(\Omega)}^p + [\bu]_{\mathcal{X}^{s,p}(\Omega)}^p$. The rescaled seminorm $(1-s)^{\frac{1}{p}}[\bu]_{\mathcal{X}^{s,p}}$ is shown to converge as $s\to 1$ to a seminorm equivalent to $\|e(\bu)\|_{L^{p}(\Omega)}$, \cite{Mengesha2012}. Moreover, on any domain, the zero-set of $[\bu]_{\mathcal{X}^{s,p}(\Omega)}$ consists precisely of the space of infinitesimal rigid motions $\mathcal{R}$ \cite{Mengesha2012, Temam-Miran}.
\subsection{Statement of main results}
Our first main result is a fractional analog of Korn's first unconstrained inequality on arbitrary bounded Lipschitz domains. Furthermore, our estimate is stable in the limit $s \to 1$, which in turn implies the local inequality \eqref{eq:Intro-Korn1stCase2}. 
While we previously obtained this inequality in \cite{MS2022} for domains with sufficiently small Lipschitz constant, both the general domain case and the stability of the estimate as $s \to 1$ remained unanswered in that work. In the present work, we completely eliminate this geometric constraint and  establish stability in $s$.

\begin{theorem}\label{thm:mainKorn1st}
    Let $n \geq 2$, and let $\Omega \subset \mathbb{R}^n$ be a bounded Lipschitz domain. Let $1<p<\infty$ and $s \in (0,1)$.
    Then there exists a constant $C = C(n,p,\Omega) > 0$ such that for all vector fields $\bu \in W^{s,p}(\Omega;\bbR^n)$,
    \begin{equation}\label{eq:mainKorn1stCase2}
        \inf_{\bspsi\in \mathcal{R}} \left( (1-s)^{-1} \vnorm{\bu- \bspsi}_{L^p(\Omega)}^p + [\bu - \bspsi]_{W^{s,p}(\Omega)}^p \right) \leq C[\bu]_{\mathcal{X}^{s,p}(\Omega)}^p.
    \end{equation}
    Moreover, there exists a bounded linear projection $\Pi : L^p(\Omega) \to \cR$ with $\vnorm{\Pi[\bu]}_{\mathrm{aff}} \leq C \vnorm{\bu}_{L^p(\Omega)}$ such that \begin{equation}\label{eq:mainKorn1stCase2:Proj}
         (1-s)^{-1}\vnorm{\bu- \Pi[\bu]}_{L^p(\Omega)}^p + [\bu - \Pi[\bu]]_{W^{s,p}(\Omega)}^p  \leq C [\bu]_{\mathcal{X}^{s,p}(\Omega)}^p.
    \end{equation}
\end{theorem}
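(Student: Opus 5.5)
We first construct an explicit projection and then prove the estimate for it; the infimum bound \eqref{eq:mainKorn1stCase2} then follows immediately from \eqref{eq:mainKorn1stCase2:Proj}. Fix a ball $B \Subset \Omega$ and a smooth radial weight $\eta \in C_c^\infty(B)$ with $\int \eta = 1$. Define $\Pi[\bu](\bx) = \bA_\bu \bx + \mathbf{b}_\bu$ with
\[
\mathbf{b}_\bu = \int \eta\, \bu - \bA_\bu \int \eta\, \bx, \qquad
\bA_\bu = \text{skew part of } -\int \bu \otimes \nabla \eta ,
\]
where the skew matrix is suitably normalized so that $\Pi[\bpsi] = \bpsi$ for every $\bpsi \in \cR$. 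This is an integration-by-parts surrogate for the averaged curl. Each coefficient is an integral of $\bu$ against a bounded compactly supported kernel, so $\|\Pi[\bu]\|_{\mathrm{aff}} \le C\|\bu\|_{L^p(\Omega)}$ trivially, and $\Pi$ is a linear projection onto $\cR$.

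\textbf{Step 1: local estimate.} On a ball, or more generally a star-shaped region, we prove
\[
(1-s)^{-1}\|\bu - \Pi[\bu]\|^p_{L^p} + [\bu - \Pi[\bu]]^p_{W^{s,p}} \le C [\bu]^p_{\cX^{s,p}}
\]
with $C$ independent of $s \in (0,1)$. The plan is to mollify. Set $\bu_\varepsilon = \bu * \rho_\varepsilon$ at each scale, use the identity expressing $\nabla\bu$ through $e(\bu)$ via second derivatives,
\[
\partial_j\partial_k u_i = \partial_j e_{ik} + \partial_k e_{ij} - \partial_i e_{jk},
\]
and control $e(\bu_\varepsilon)$ by the projected difference quotients averaged over directions. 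Summing over dyadic scales $\varepsilon = 2^{-k}$ then yields a Littlewood--Paley/Besov-type bound $[\bu]_{W^{s,p}} \lesssim [\bu]_{\cX^{s,p}} + \|\bu\|$-lower-order terms. In this summation the factor $(1-s)^{-1}$ arises from $\sum_k 2^{-kp(1-s)}$, which is what yields stability as $s \to 1$.

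An alternative I would try first is the known small-Lipschitz result from \cite{MS2022}, combined with the convergence results of \cite{Mengesha2012} and Bourgain--Brezis--Mironescu, to check that the constant's $s$-dependence enters only through $(1-s)$. The $L^p$ term is handled by a fractional Poincaré inequality, $\|\bv\|^p_{L^p} \le C(1-s)\,[\bv]^p_{W^{s,p}}$-type after normalization. More precisely, we use a nonlocal Poincaré inequality for the projected seminorm on functions with $\Pi[\bv] = 0$, proved via a compactness-free argument: write $\bv(\bx) - \bv(\by)$ along segments and average over chains.

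\textbf{Step 2: globalization.} We cover the Lipschitz domain $\Omega$ by finitely many subdomains $U_j$, each of which is bi-Lipschitz to a region with small Lipschitz constant, star-shaped with respect to a ball, with consecutive overlaps containing a ball. On each $U_j$ we obtain a rigid motion $\bpsi_j$ with the local bound. On overlaps, $\|\bpsi_j - \bpsi_k\|_{L^p(U_j\cap U_k)}$ is controlled by the two local errors, and since $\cR$ is finite dimensional, this controls $\|\bpsi_j - \bpsi_k\|_{L^p(\Omega)}$. Chaining the pieces to a common $\bpsi$ gives the $L^p$ part. For the seminorm part, we split $\Omega\times\Omega$ into pairs lying in a common $U_j$, and pairs at distance at least $\delta > 0$; on the latter the integrand is bounded by $\delta^{-n-sp}$ times $L^p$ norms, and these are absorbed by the $(1-s)^{-1}$-weighted $L^p$ bound, since $(1-s) \le 1$. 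Finally we replace $\bpsi$ by $\Pi[\bu]$, using that $\Pi$ is a bounded projection: $\bu - \Pi[\bu] = (I - \Pi)(\bu - \bpsi)$, and the seminorm of $\Pi[\bu] - \bpsi$ is bounded by its affine norm.

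\textbf{Main obstacle.} I expect the main obstacle to be Step 1 with an $s$-uniform constant. Recovering the full difference $|\bu(\bx) - \bu(\by)|$ from its longitudinal component requires an integral identity: averaging over intermediate points $\bz$ the projected differences along $\bx\bz$ and $\bz\by$ reconstructs the full vector algebraically. We then need a Hardy/Schur-type bound for the resulting operator, with the constant of order $(1-s)^{-1}$ only where it is matched by the left side. I would attempt this first via the triangle-based identity, and fall back on the mollification/Besov route if the kernel bounds fail near $s \to 1$.
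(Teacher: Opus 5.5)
\textbf{Verdict: genuine gap.} Your proposal does not establish Step 1, the $s$-uniform estimate on a single ball or star-shaped piece, and that step is the entire difficulty of the theorem.

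\textbf{What works.} Your projection is fine: it is bounded on $L^1$, fixes $\cR$, and plays the role of the paper's $\Pi_B$. Step 2 is also sound. Chaining the $\bspsi_j$ through overlaps that contain balls works. So does treating pairs with $|\bx-\by|\ge\delta$ by a Lebesgue-number argument and the $(1-s)^{-1}$-weighted $L^p$ bound. So does swapping $\bspsi$ for $\Pi[\bu]$ via $[\bA\bx+\mathbf{b}]_{W^{s,p}(\Omega)}\le C(1-s)^{-1/p}|\bA|$. All of this requires Step 1 to hold on each $U_j$ itself. The inequality is not invariant under bi-Lipschitz changes of variables, since neither $\cR$ nor projected differences are preserved. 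The paper instead works directly on bi-Lipschitz images of balls, using a Whitney-type ball cover and Dyda's equivalence.

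\textbf{Why each of your routes to Step 1 fails.}
\begin{enumerate}
\item \emph{Mollification.} This breaks at ``control $e(\bu_\varepsilon)$ by projected difference quotients''. For radial $\rho$, $\partial_j u^i_\varepsilon(\bx)=\int\rho_\varepsilon'(|\bx-\by|)\frac{(\bx-\by)_j}{|\bx-\by|}(u^i(\by)-u^i(\bx))\,\rmd\by$. So $e(\bu_\varepsilon)$ averages the full symmetrized tensor $(\bu(\by)-\bu(\bx))\odot(\bx-\by)$, which the scalar $(\bu(\by)-\bu(\bx))\cdot(\bx-\by)$ does not determine. At a fixed scale the two are linked only through a matrix Fourier multiplier on all of $\bbR^n$. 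That brings back the extension and whole-space machinery whose known constants degenerate as $s\to1$.
\item \emph{Combining \cite{MS2022} with the limits of \cite{BourgainBrezisMironescu2001,Mengesha2012}.} A limit as $s\to1$ for each fixed $\bu$ says nothing about a constant uniform in $s$ and $\bu$.
\item \emph{The triangle identity.} It cannot reconstruct $\bu(\bx)-\bu(\by)$, because rigid motions have vanishing projected differences. Any reconstruction holds only modulo local rigid motions. Chaining those across dyadic scales with positive-kernel bounds is exactly the paper's $p=1$ argument, and it loses $(1-s)^{-1}$. Some ingredient specific to $1<p<\infty$ is needed, because an $s$-uniform bound at $p=1$ would give the false $W^{1,1}$ Korn inequality as $s\to1$. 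Your remark that the $(1-s)^{-1}$ is ``matched by the left side'' names no such ingredient.
\end{enumerate}

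\textbf{The missing idea.} The paper proceeds in four steps.
\begin{enumerate}
\item A single-scale coercivity bound $\vnorm{\bu-\Pi_{B(\bx_0,r)}[\bu]}^p_{L^p(B(\bx_0,r))}\le Cr^{-n}\cE_r(\bu;B(\bx_0,r))$. At unit scale this uses compactness: the zero set of $\cE_1$ is $\cR$, $\bbK$ is compact, and $\Pi_B$ is $L^p$-continuous. It extends to boundary pieces $B\cap B(\bx_0,2r)$ through the interior ball $B((1-r)\bx_0,r)$ and the uniform positivity of $\Phi$.
\item At each scale $r$, patch the local rigid motions, $\bv=\sum_i\varphi_i\bspsi_i$, rather than mollifying $\bu$. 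Since $e(\bspsi_i)=0$ and $\sum_i\grad\varphi_i=0$, $e(\bv)$ is the symmetric part of $\sum_i(\bspsi_i-\bu)\otimes\grad\varphi_i$. Hence $\vnorm{e(\bv)}^p_{L^p(B)}\le Cr^{-n-p}\cE_r(\bu;B)$.
\item Apply the classical Korn inequality to the smooth field $\bv$; this is where $1<p<\infty$ enters. It gives $\cD_r(\bu-\bspsi;B)+r^{n+p}\vnorm{\bu-\bspsi}^p_{L^p(B)}\le C\cE_r(\bu;B)$ with $C$ independent of $r$, and $L^p$-continuity lets $\bspsi$ be replaced by $\Pi_B[\bu]$.
\item Summing over $r=2^{-k}$ gives the $(1-s)$-stable estimate.
\end{enumerate}
Without this single-scale lemma, or a genuine substitute for it, your Step 1 and hence the theorem are not proved.
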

The norm of an affine map $\bspsi(\bx) = \bA \bx + \bfb$ is defined as the sum of Frobenius norm and Euclidean norm: $\vnorm{\bspsi}_{\mathrm{aff}} := |\bA| + |\bfb|$.
Notice that after multiplying through by $(1-s)$, 
 the inequality \eqref{eq:mainKorn1stCase2:Proj} is stable as $s\to 1$ and recovers the classical Korn's inequality \eqref{eq:Intro-Korn1stCase2:Proj} for functions $\bu$ in the local Sobolev spaces $W^{1,p}$.

Our second main result is the same fractional Korn's inequality in the borderline cases $p = 1$ and $p = \infty$. To the best of our knowledge, these inequalities are the first of their kind. Moreover, they have no classical local analog , as Korn's inequalities are well-known to fail for $p=1$ and $p=\infty$.
\begin{theorem}\label{thm:mainKorn1st:Endpoints}
    Let $n \geq 2$ and let $\Omega \subset \bbR^n$ be a bounded Lipschitz domain. Let $p=1$ or $p=\infty$
    and $s \in (0,1)$. Then there exists a constant $C = C(n,\Omega) > 0$ such that for all $\bu \in W^{s,p}(\Omega;\bbR^n)$
    \begin{equation}\label{eq:mainKorn1stCase2:Proj:Endpoints}
         (1-s)^{-1} \vnorm{\bu- \Pi[\bu]}_{L^p(\Omega)}^p + [\bu - \Pi[\bu]]_{W^{s,p}(\Omega)}^p \leq \frac{C}{1-s} [\bu]_{\mathcal{X}^{s,p}(\Omega)}^p, \quad \forall \bu \in W^{s,p}(\Omega;\bbR^n),
    \end{equation}
    where $\Pi$ is a bounded linear projection onto $\cR$ with $\vnorm{\Pi[\bu]}_{\mathrm{aff}} \leq C \vnorm{\bu}_{L^p(\Omega)}$.
\end{theorem}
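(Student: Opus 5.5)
The plan is to reduce the endpoint inequality \eqref{eq:mainKorn1stCase2:Proj:Endpoints} to two ingredients. The first is a Poincaré-type estimate for the projected seminorm, namely $\vnorm{\bu-\Pi[\bu]}_{L^p(\Omega)}^p\le C[\bu]_{\mathcal{X}^{s,p}(\Omega)}^p$, with the same projection $\Pi$ as in Theorem~\ref{thm:mainKorn1st}. The second is a Korn-type bound for the full Gagliardo seminorm, $[\bu]_{W^{s,p}(\Omega)}^p\le \frac{C}{1-s}\big([\bu]_{\mathcal{X}^{s,p}(\Omega)}^p+\vnorm{\bu}_{L^p(\Omega)}^p\big)$. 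Everything is invariant under adding rigid motions: $[\bspsi]_{\mathcal{X}^{s,p}}=0$ for $\bspsi\in\cR$, and $\Pi$ fixes $\cR$. So we may apply the second bound to $\bv=\bu-\Pi[\bu]$, and the first bound then controls $\vnorm{\bv}_{L^p}$. Multiplying the first bound by $(1-s)^{-1}$ gives the stated factor $C/(1-s)$. For $p=\infty$ the quantities $\vnorm{\cdot}^p$ and $[\cdot]^p$ are read as the sup-norm and the Hölder-type seminorm $\sup|\bu(\bx)-\bu(\by)|/|\bx-\by|^s$, and the exponents are dropped.

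For the projection I will use the explicit operator built for Theorem~\ref{thm:mainKorn1st}, which the abstract indicates is bounded on every $L^p$. Concretely, it is an averaged $L^2$-orthogonal projection onto $\cR$ against a fixed smooth weight. Such a map is bounded from $L^1$ to $\cR$, with $\vnorm{\Pi[\bu]}_{\mathrm{aff}}\le C\vnorm{\bu}_{L^1}\le C|\Omega|^{1-1/p}\vnorm{\bu}_{L^p}$. This covers $p=1$ and $p=\infty$ simultaneously.

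The Poincaré step goes by compactness-free direct estimates. I first localize to a ball $B\Subset\Omega$ and use the identity behind the zero-set characterization: on a ball, the quantity $(\bu(\by)-\bu(\bx))\cdot(\by-\bx)$ integrated against suitable test kernels recovers $\bu$ minus an affine skew field. This gives a local bound $\vnorm{\bu-\bspsi_B}_{L^p(B)}\le C_B[\bu]_{\mathcal{X}^{s,p}(B)}$, with $C_B$ uniform in $s$. I then chain balls through a Whitney/Boman covering of the Lipschitz domain, with overlapping rigid motions compared via finite-dimensionality of $\cR$. Each step is a linear estimate by kernels integrable in $L^1$ and $L^\infty$ alike, so it works at the endpoints. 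Finally, $\bspsi_B$ is replaced by $\Pi[\bu]$ using $\Pi$'s boundedness and $\Pi\bspsi=\bspsi$.

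The Korn step for the full seminorm is where $p\in\{1,\infty\}$ differs and where I expect the main obstacle. For $1<p<\infty$ the stable constant comes from singular-integral (Calderón–Zygmund) reconstruction of $\nabla\bu$ from $e(\bu)$, which fails at the endpoints. Instead, I plan to express $\bu(\bx)-\bu(\by)$ through projected increments $(\bu(\bz)-\bu(\bw))\cdot\frac{\bz-\bw}{|\bz-\bw|}$ along a few auxiliary points in a cone (a polarization/triangle identity in $n$ independent directions inside $\Omega$, made possible by the interior cone condition). This gives a pointwise bound of the increment by an average of projected increments at comparable scale. Integrating against $|\bx-\by|^{-n-sp}$ produces, at small scales $r<|\bx-\by|$, a factor $\int_0^{1} r^{p-1-sp}\,dr\sim (1-s)^{-1}$, because the averaging costs one power of scale. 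This is the source of the $(1-s)^{-1}$ degeneration, and it cannot be removed at the endpoints. Large scales are bounded by $\vnorm{\bv}_{L^p}$. The delicate part is making the geometric construction uniform near $\partial\Omega$. I would first try it on a Lipschitz graph domain via a reflection-free cone construction, then patch with a partition of unity, absorbing commutator terms $\vnorm{\bv}_{L^p}$ by the Poincaré step.
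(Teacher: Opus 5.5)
\textbf{There is a genuine gap, in the step you yourself flagged as the main obstacle.} The reduction itself is sound. A Poincaré bound $\vnorm{\bu-\Pi[\bu]}_{L^p(\Omega)}^p\le C[\bu]_{\mathcal{X}^{s,p}(\Omega)}^p$ with $s$-independent $C$, combined with a second Korn inequality with constant $C/(1-s)$ applied to $\bv=\bu-\Pi[\bu]$, does yield \eqref{eq:mainKorn1stCase2:Proj:Endpoints}. The Poincaré half is also plausible: locally it is Lemma \ref{lma:CoercivityUnscaled} for $p=1$ and Lemma \ref{lma:Unscaled:Infty} for $p=\infty$, where only scales comparable to the radius enter.

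The Korn step as you propose it cannot work. A polarization identity among finitely many points at scale comparable to $|\bx-\by|$ recovers only the \emph{symmetric} part of the local affine interpolant; this is exactly the $\bF_{sym}$ computation in Lemma \ref{lma:Unscaled:Infty}. The skew part is invisible to projected increments. For example, let $\bu$ equal one infinitesimal rigid motion $\bspsi_1$ on a subregion $U$ and a different one elsewhere, smoothly glued. For $\bx,\by$ deep in $U$, every projected increment among points of $U$ vanishes, yet $\bv(\bx)-\bv(\by)=(\grad\bspsi_1-\grad\Pi[\bu])(\bx-\by)\neq\mathbf{0}$. So no bound of the increment by projected increments at comparable (or smaller) scales exists. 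In particular, the factor $(1-s)^{-1}$ cannot come from scales $r<|\bx-\by|$.

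\textbf{What is missing is multiscale chaining of local rigid motions, which is the core of the paper's proof.} For $p=\infty$, around a point $\bx_0$ one takes rigid motions $\bspsi_j=\bA_j\bx+\bfb_j$ with $|\bu-\bspsi_j|\lesssim 2^{-js}[\bu]_{\cK^s(\Omega)}$ on $\Omega\cap B(\bx_0,2^{-j})$.
\begin{itemize}
\item Comparing consecutive ones on their overlap gives $|\bA_j-\bA_{j+1}|\lesssim 2^{j(1-s)}[\bu]_{\cK^s(\Omega)}$.
\item Telescoping from the domain scale, where the rotation is anchored by $\Pi$, down to $2^{-k}\sim|\bx-\by|$ gives $|\bA_k-\bA_0|\lesssim(1-2^{s-1})^{-1}2^{k(1-s)}[\bu]_{\cK^s(\Omega)}$ (Theorem \ref{thm:Scaled:Infty}).
\end{itemize}
The factor $(1-s)^{-1}$ is therefore the accumulation of rotations over all scales \emph{between} $|\bx-\by|$ and $\mathrm{diam}\,\Omega$, i.e.\ larger scales, not smaller ones.

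For $p=1$ the same idea must be run in integrated form, as in Lemma \ref{lma:CoercivityScaled:Korn1st:L1}:
$\cD_{2^{-k}}(\bu-\Pi_B[\bu];B)\le C\sum_{j\le k}2^{(j-k)(n+1)}\cE_{2^{-j}}(\bu;B)$.
This requires a dyadic cover whose nested chains have controlled multiplicity (Lemma \ref{lma:L1Cover}(iii)), so that summing over all cells at level $k$ does not lose powers of $2^{k-j}$. Tonelli in $(j,k)$ then produces $\sum_{\ell\ge0}2^{-\ell(1-s)}\sim(1-s)^{-1}$.

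Only once this interior estimate is in place does the boundary question arise. The paper handles it with Whitney-type ball chains in bi-Lipschitz images of balls plus Dyda's equivalence for the truncated seminorm. For $p=\infty$ it instead applies the interpolation lemma directly on $\Omega\cap B(\bx_0,r)$, using the interior cone condition. Your partition-of-unity patching might serve at that stage, but it cannot replace the missing multiscale estimate.

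A minor point: for $p=\infty$, chaining Whitney balls in your Poincaré step gives a constant of order $\sum_j r_j^s\sim 1/s$, which is not uniform as $s\to0$. Applying the interpolation lemma directly on $\Omega$, which contains a ball of radius comparable to its diameter, avoids this.
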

Since $\Omega$ is bounded we identify $W^{s,\infty}(\Omega;\bbR^n)$ as $C^s(\Omega;\bbR^n)$, the space of H\"older-continuous functions with exponent $s$. $\cX^{s,\infty}$ is defined analogously; the details are in Section \ref{subsec:infty}.
A fundamental difference between \eqref{eq:mainKorn1stCase2:Proj} and \eqref{eq:mainKorn1stCase2:Proj:Endpoints} is the constant's dependence on $s$.
For smooth functions, after multiplying \eqref{eq:mainKorn1stCase2:Proj:Endpoints} by $(1-s)$, the left-hand side remains bounded by the $W^{1,p}$ norm while the right-hand side is unbounded 
as $s \to 1$ (see \cite{Mengesha2012}). Thus \eqref{eq:mainKorn1stCase2:Proj:Endpoints} is consistent with the failure of the classical Korn's inequality for $W^{1,1}$ and $W^{1,\infty}$.

From Theorem \ref{thm:mainKorn1st} and Theorem \ref{thm:mainKorn1st:Endpoints}, we deduce the following fractional versions of Korn's second inequality and Korn's first constrained inequality, and establish the function space equivalence $\mathcal{X}^{s,p}(\Omega) = W^{s,p}(\Omega;\mathbb{R}^n)$.

\begin{theorem}\label{thm:mainKorn2nd}
    With $n$, $\Omega$, and $s$ as in Theorem \ref{thm:mainKorn1st}, let $1\leq p\leq \infty.$
    Then there exists a constant $C = C(n,p,\Omega) > 0$ such that for all $\bu \in \mathcal{X}^{s,p}(\Omega)$,
    \begin{equation}\label{eq:mainKorn2nd}
        [\bu]_{W^{s,p}(\Omega)}^p \leq C \left( \alpha_{s}(p) [\bu]_{\mathcal{X}^{s,p}(\Omega)}^p + (1-s)^{-1} \|\bu\|_{L^p(\Omega)}^p \right),
    \end{equation}
    where \[\alpha_s(p) = \begin{cases}
  1, & \text{if } 1 < p < \infty, \\[1ex]
  (1-s)^{-1}, & \text{if } p=1 \text{ or } p=\infty. 
\end{cases}\]
    Consequently, $\mathcal{X}^{s,p}(\Omega) = W^{s,p}(\Omega;\mathbb{R}^n)$ with equivalent norms.
\end{theorem}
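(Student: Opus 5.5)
Since Theorem \ref{thm:mainKorn1st} and Theorem \ref{thm:mainKorn1st:Endpoints} are available, the plan is to deduce \eqref{eq:mainKorn2nd} from the projection estimates \eqref{eq:mainKorn1stCase2:Proj} and \eqref{eq:mainKorn1stCase2:Proj:Endpoints} by the triangle inequality. The extra work is controlling the Gagliardo seminorm of the rigid motion $\Pi[\bu]$ by $\|\bu\|_{L^p}$ with the correct power of $(1-s)$. Both theorems are stated for $\bu\in W^{s,p}(\Omega;\bbR^n)$, while Theorem \ref{thm:mainKorn2nd} is stated for $\bu\in\mathcal{X}^{s,p}(\Omega)$. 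I would therefore first prove the inequality for $\bu \in W^{s,p}$ and then remove that assumption by approximation.

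\textbf{Step 1 (estimate for $\bu\in W^{s,p}$).} Write $\bu = (\bu-\Pi[\bu]) + \Pi[\bu]$, so that
\[
[\bu]_{W^{s,p}(\Omega)}^p \le 2^{p-1}\Big([\bu-\Pi[\bu]]_{W^{s,p}(\Omega)}^p + [\Pi[\bu]]_{W^{s,p}(\Omega)}^p\Big).
\]
The first term is at most $C\alpha_s(p)[\bu]_{\mathcal{X}^{s,p}}^p$ by \eqref{eq:mainKorn1stCase2:Proj} or \eqref{eq:mainKorn1stCase2:Proj:Endpoints}. For the second term, put $\bspsi=\Pi[\bu]=\bA\bx+\bfb$, so $|\bspsi(\bx)-\bspsi(\by)|\le |\bA||\bx-\by|$. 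Using $\Omega\subset B(\bx,\mathrm{diam}\,\Omega)$, we get
\[
[\bspsi]_{W^{s,p}}^p \le |\bA|^p |\Omega| \int_{B(0,\mathrm{diam}\,\Omega)} |\bh|^{p-n-sp}\,d\bh = |\bA|^p|\Omega|\,\frac{|S^{n-1}|\,\mathrm{diam}(\Omega)^{p(1-s)}}{p(1-s)}.
\]
Since $\mathrm{diam}(\Omega)^{p(1-s)}$ is bounded uniformly in $s$ and $|\bA|\le \|\Pi[\bu]\|_{\mathrm{aff}}\le C\|\bu\|_{L^p}$, this is at most $C(1-s)^{-1}\|\bu\|_{L^p}^p$, which gives \eqref{eq:mainKorn2nd}. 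For $p=\infty$ the same argument runs with Hölder seminorms: $[\bspsi]_{C^s}\le |\bA|\,\mathrm{diam}(\Omega)^{1-s}$, and the stated form holds in the convention where the $p$-th powers are read as the sup-norm quantities.

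\textbf{Step 2 (removing the a priori assumption $\bu\in W^{s,p}$).} This is the main obstacle, since \eqref{eq:mainKorn2nd} is claimed for every $\bu\in\mathcal{X}^{s,p}$ and we do not know in advance that such $\bu$ lies in $W^{s,p}$. I would approximate: on a Lipschitz domain, reduce to finitely many star-shaped pieces via a partition of unity. On each piece, use dilation about the star center followed by mollification, $\bu_\varepsilon = \rho_\varepsilon * (\bu\circ \tau_\varepsilon^{-1})$. Then:
\begin{itemize}
\item $\bu_\varepsilon$ is smooth up to the boundary, so $\bu_\varepsilon\in W^{s,p}$.
\item The quantity $(\bu(\by)-\bu(\bx))\cdot\frac{\by-\bx}{|\by-\bx|}$ is preserved by translations and dilations. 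Convolution is an average of translates, and Minkowski's inequality then gives $[\bu_\varepsilon]_{\mathcal{X}^{s,p}(\Omega')}\le C[\bu]_{\mathcal{X}^{s,p}}$ on subdomains, with $\bu_\varepsilon\to\bu$ in $L^p$.
\end{itemize}
The partition-of-unity step is delicate, because multiplying by a cutoff $\phi$ produces the commutator $(\phi(\by)-\phi(\bx))\bu(\by)\cdot\frac{\by-\bx}{|\by-\bx|}$. Its contribution to the seminorm is $\lesssim (1-s)^{-1}\|\bu\|_{L^p}^p$ by the same integral computed in Step 1, so it is harmless. Apply Step 1 to $\bu_\varepsilon$ and use Fatou's lemma on the Gagliardo integrand as $\varepsilon\to0$ (along an a.e.-convergent subsequence). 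This shows that $\bu\in W^{s,p}$ and that \eqref{eq:mainKorn2nd} holds.

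Once this density is settled, the final claim follows. The inclusion $W^{s,p}\subset\mathcal{X}^{s,p}$ is immediate from $|(\bu(\by)-\bu(\bx))\cdot\frac{\by-\bx}{|\by-\bx|}|\le|\bu(\by)-\bu(\bx)|$. The reverse inclusion, with norm equivalence, is \eqref{eq:mainKorn2nd} for fixed $s$.
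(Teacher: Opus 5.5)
Your Step 1 is exactly the paper's proof of Theorem \ref{thm:mainKorn2nd}. The paper uses the triangle inequality through $\Pi[\bu]$ and the bound $[\Pi[\bu]]_{W^{s,p}(\Omega)}\le C(1-s)^{-1/p}\vnorm{\Pi[\bu]}_{\mathrm{aff}}\le C(1-s)^{-1/p}\vnorm{\bu}_{L^p(\Omega)}$, which comes from the same integral as in \eqref{eq:NormCompEquiv}. For $p=\infty$ it uses \eqref{eq:NormCompEquiv:Infty} instead.

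The paper has no counterpart of your Step 2; it simply applies \eqref{eq:mainKorn1stCase2:Proj} and \eqref{eq:mainKorn1stCase2:Proj:Endpoints} to $\bu\in\cX^{s,p}(\Omega)$. This is justified because, although Theorems \ref{thm:mainKorn1st} and \ref{thm:mainKorn1st:Endpoints} are stated on $W^{s,p}$, their proofs never use a priori $W^{s,p}$ regularity:
\begin{itemize}
\item Lemma \ref{lma:CoercivityUnscaled} and the single-scale estimates for $\cE_r$ hold for every $\bu\in L^p$;
\item Theorems \ref{thm:Korns1stOnBall} and \ref{thm:KornBiLipBall} are formulated for $\bu\in\cX^{s,p}$;
\item Lemma \ref{lma:Unscaled:Infty} is formulated for $\bu\in\cK^s$.
\end{itemize}
This is exactly the advantage over \cite{Duran2026} that the authors emphasize. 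Your regularization argument is a legitimate alternative if you take the stated hypotheses at face value. It also has the merit that it would upgrade any Korn inequality known only on $W^{s,p}$. The cost is a localization--dilation--mollification argument that the paper's $L^p$-level approach makes unnecessary.

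Two details in your Step 2 need adjusting.

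First, the cutoff commutator adds $C(1-s)^{-1/p}\vnorm{\bu}_{L^p(\Omega)}$ to $[\bu_\varepsilon]_{\cX^{s,p}(\Omega)}$. For $p=1$ this term is multiplied by $\alpha_s(1)=(1-s)^{-1}$ when you apply Step 1. Fatou then gives $(1-s)^{-2}\vnorm{\bu}_{L^1(\Omega)}$ rather than the claimed $(1-s)^{-1}\vnorm{\bu}_{L^1(\Omega)}$. The fix is to use the approximation only to conclude $\bu\in W^{s,p}(\Omega)$, and then apply Step 1 to $\bu$ itself.

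Second, Step 1 needs a bound on $[\bu_\varepsilon]_{\cX^{s,p}(\Omega)}$ over all of $\Omega$, not just on the star-shaped pieces $\Omega'$. This does hold: take the cutoff supports compactly inside their charts, the dilation factor close to $1$, and the mollification radius small. Then each regularized piece stays at a fixed positive distance from the part of $\Omega$ outside its chart. The cross terms in the seminorm are therefore bounded by $C\vnorm{\bu}_{L^p(\Omega)}^p$, but this step should be stated.
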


\begin{corollary}\label{cor:mainKorn1st:Constrained}
    With $n$, $\Omega$, and $s$ as in Theorem \ref{thm:mainKorn1st}, let $p \in [1,\infty)$. 
    Suppose that  $\mathring{\mathcal{X}}^{s,p}(\Omega) \subset \mathcal{X}^{s,p}(\Omega)$ is any closed subspace equipped with the $\mathcal{X}^{s,p}(\Omega)$-norm such that $\mathring{\mathcal{X}}^{s,p}(\Omega) \cap \mathcal{R} = \{ \mathbf{0} \}$. Then there exists a constant $C = C (\mathring{\mathcal{X}}^{s,p}(\Omega)) > 0$ such that \begin{equation}\label{eq:mainKorn1stCase1}
            \|\bu\|_{W^{s,p}(\Omega)}^p \leq C [\bu]_{\mathcal{X}^{s,p}(\Omega)}^p \quad \text{for all } \bu \in \mathring{\mathcal{X}}^{s,p}(\Omega).
        \end{equation}
\end{corollary}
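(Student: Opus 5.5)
The plan is the standard contradiction-plus-compactness argument, using Theorem \ref{thm:mainKorn2nd} for the compactness and the fact that the zero set of $[\cdot]_{\mathcal{X}^{s,p}(\Omega)}$ is exactly $\mathcal{R}$. Here $s\in(0,1)$ is fixed, so the constant may depend on $s$ as well as on the subspace. By Theorem \ref{thm:mainKorn2nd}, $\mathcal{X}^{s,p}(\Omega)=W^{s,p}(\Omega;\bbR^n)$ with equivalent norms. It therefore suffices to prove
\begin{equation*}
\|\bu\|_{L^p(\Omega)}^p \leq C [\bu]_{\mathcal{X}^{s,p}(\Omega)}^p \quad \text{for all } \bu\in \mathring{\mathcal{X}}^{s,p}(\Omega).
\end{equation*}
Indeed, once this holds, \eqref{eq:mainKorn2nd} bounds $[\bu]_{W^{s,p}(\Omega)}^p$ by a constant times $[\bu]_{\mathcal{X}^{s,p}(\Omega)}^p+\|\bu\|_{L^p(\Omega)}^p$, and hence by a constant times $[\bu]_{\mathcal{X}^{s,p}(\Omega)}^p$. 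This yields \eqref{eq:mainKorn1stCase1}.

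Suppose the $L^p$ bound fails. Then there are $\bu_k\in\mathring{\mathcal{X}}^{s,p}(\Omega)$ with $\|\bu_k\|_{L^p(\Omega)}=1$ and $[\bu_k]_{\mathcal{X}^{s,p}(\Omega)}\to 0$. By \eqref{eq:mainKorn2nd}, the sequence $(\bu_k)$ is bounded in $W^{s,p}(\Omega;\bbR^n)$. Since $\Omega$ is a bounded Lipschitz domain and $p<\infty$, the embedding $W^{s,p}(\Omega;\bbR^n)\hookrightarrow L^p(\Omega;\bbR^n)$ is compact. One way to see this is to extend by a bounded $W^{s,p}$ extension operator and apply the Riesz--Kolmogorov criterion using the translation estimate from the Gagliardo seminorm. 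Passing to a subsequence, $\bu_k\to\bu$ in $L^p(\Omega)$ with $\|\bu\|_{L^p(\Omega)}=1$.

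Next I would identify the limit. Along a further subsequence $\bu_k\to\bu$ a.e., so Fatou's lemma applied to the double integral \eqref{eq:NonlocalSymSeminormDef} gives $[\bu]_{\mathcal{X}^{s,p}(\Omega)}\leq\liminf_k[\bu_k]_{\mathcal{X}^{s,p}(\Omega)}=0$, and hence $\bu\in\mathcal{R}$. Applying \eqref{eq:mainKorn2nd} to the differences $\bu_k-\bu$ shows that $\bu_k\to\bu$ in the full $\mathcal{X}^{s,p}(\Omega)$ norm:
\begin{equation*}
\|\bu_k-\bu\|_{W^{s,p}}^p\lesssim [\bu_k-\bu]_{\mathcal{X}^{s,p}}^p+\|\bu_k-\bu\|_{L^p}^p = [\bu_k]_{\mathcal{X}^{s,p}}^p+\|\bu_k-\bu\|_{L^p}^p\to0,
\end{equation*}
where the equality uses that $[\cdot]_{\mathcal{X}^{s,p}}$ is unchanged by adding a rigid motion. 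Since $\mathring{\mathcal{X}}^{s,p}(\Omega)$ is closed, $\bu$ lies in $\mathring{\mathcal{X}}^{s,p}(\Omega)\cap\mathcal{R}=\{\bzero\}$. This contradicts $\|\bu\|_{L^p(\Omega)}=1$.

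The only step that needs care is the compact embedding, since \eqref{eq:mainKorn2nd} supplies the uniform $W^{s,p}$ bound. The compactness is standard for Lipschitz domains when $1\le p<\infty$, including $p=1$, which is why $p=\infty$ is excluded from the corollary. The endpoint weight $\alpha_s(p)=(1-s)^{-1}$ is harmless here because $s$ is fixed.
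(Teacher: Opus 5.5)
Your argument is correct and follows the paper's proof: contradiction, compactness of $W^{s,p}(\Omega)\hookrightarrow L^p(\Omega)$, Theorem \ref{thm:mainKorn2nd} to upgrade to strong $W^{s,p}$ convergence, the zero-set characterization of $[\cdot]_{\mathcal{X}^{s,p}(\Omega)}$, and closedness of the subspace; normalizing in $L^p$ and identifying the limit by Fatou, rather than normalizing $\|\bu_k\|_{W^{s,p}(\Omega)}=1$ and running a Cauchy argument as the paper does, is only a cosmetic difference. One side remark is off but harmless: compactness is not what fails at $p=\infty$, since $C^s(\overline{\Omega})\hookrightarrow L^\infty(\Omega)$ is compact by Arzel\`a--Ascoli.
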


Efforts to establish estimates of the form \eqref{eq:mainKorn1stCase2}, \eqref{eq:mainKorn2nd}, and \eqref{eq:mainKorn1stCase1} have been incremental over recent years. Prior to \cite{MS2022}, such estimates were proved for restricted classes of functions such as $W^{s,p}_0(\Omega;\mathbb{R}^n)$ under specific parameter constraints on $s$, $p$, and the domain $\Omega$ \cite{M-half,Rut.2022,MScott2018Korn}. These earlier proofs relied fundamentally on the fractional Korn inequality on the entire space $\mathbb{R}^n$ established by the second and fourth authors in \cite{MScott2018Korn}. In a recent independent study, Acosta et al. \cite{Duran2026} also investigated fractional Korn inequalities on uniform domains.
For Lipschitz domains, the results in this work strengthen previous results in the setting of $p \in (1,\infty)$, and are completely new for $p = 1,\infty$.

\subsection{Constrained fractional Korn inequalities}\label{subsec:subspaces}
Using Corollary \ref{cor:mainKorn1st:Constrained}, we systematically examine several classes of constrained subspaces $\mathring{\mathcal{X}}^{s,p}(\Omega)$ that arise in nonlocal variational problems and physical applications. A prime motivational application is peridynamics \cite{Silling2000,Silling2007,Silling2010}, a nonlocal continuum mechanics theory whose linearized constitutive laws are typically characterized via projected difference quotients. In variational formulations of peridynamics, the nonlocal analog  of classical Dirichlet boundary conditions is customarily imposed through \textit{volume constraints} on $\mathbf{u}$, while for Neumann-type nonlocal problems, rigid motions are eliminated by restricting the fields to the kernel of an appropriate linear projection operator. The next two propositions establish the equivalence of norms on these classes of vector fields.

\paragraph{Nonlocal volume (peridynamic collar) constraints.}
In nonlocal models, minimizers can lack sufficient regularity in order to properly prescribe boundary constraints. Instead, a common practice is to state boundary conditions for $\bu$ on a ``collar'' of positive Lebesgue measure $\widetilde{\Omega} \subset \Omega$.

\begin{proposition}\label{prop:PeridynamicCollar}
    Under the hypotheses of Corollary \ref{cor:mainKorn1st:Constrained}, let $\widetilde{\Omega} \subset \Omega$ be a measurable set with $|\widetilde{\Omega}| > 0$. Define
    \begin{equation}
        \widetilde{\mathcal{X}}^{s,p}_{\widetilde{\Omega}}(\Omega) := \left\{ \bu \in \mathcal{X}^{s,p}(\Omega) : \bu = \mathbf{0} \text{ a.e. on } \widetilde{\Omega} \right\}.
    \end{equation}
    Then there exists $C = C(n,s,p,\Omega,\widetilde{\Omega}) > 0$ such that \eqref{eq:mainKorn1stCase1} holds for all $\bu \in \widetilde{\mathcal{X}}^{s,p}_{\widetilde{\Omega}}(\Omega)$.
\end{proposition}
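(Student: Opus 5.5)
The plan is to verify the two hypotheses of Corollary \ref{cor:mainKorn1st:Constrained} for $\mathring{\mathcal{X}}^{s,p}(\Omega) := \widetilde{\mathcal{X}}^{s,p}_{\widetilde{\Omega}}(\Omega)$: that it is a closed subspace of $\mathcal{X}^{s,p}(\Omega)$, and that it meets $\mathcal{R}$ only in $\mathbf{0}$. The conclusion \eqref{eq:mainKorn1stCase1} then follows directly. The constant depends on $s$ through the corollary, which matches the stated dependence $C(n,s,p,\Omega,\widetilde{\Omega})$.

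\emph{Subspace and closedness.} Linearity is clear, since the condition $\bu = \mathbf{0}$ a.e. on $\widetilde{\Omega}$ is preserved under linear combinations. For closedness, take $\bu_k \in \widetilde{\mathcal{X}}^{s,p}_{\widetilde{\Omega}}(\Omega)$ with $\bu_k \to \bu$ in $\mathcal{X}^{s,p}(\Omega)$. Then $\bu_k \to \bu$ in $L^p(\Omega;\bbR^n)$, so a subsequence converges a.e. Hence $\bu = \mathbf{0}$ a.e. on $\widetilde{\Omega}$. Since $\mathcal{X}^{s,p}(\Omega)$ is a Banach space, the limit $\bu$ lies in it, and therefore in $\widetilde{\mathcal{X}}^{s,p}_{\widetilde{\Omega}}(\Omega)$.

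\emph{Trivial intersection with $\mathcal{R}$.} Suppose $\bspsi(\bx) = \bA\bx + \bfb \in \mathcal{R}$ vanishes a.e. on $\widetilde{\Omega}$. The zero set $Z = \{\bx : \bA\bx + \bfb = \mathbf{0}\}$ is either empty or an affine subspace $\bx_0 + \ker \bA$. If $\bA \neq \bzero$, skew-symmetry forces $\operatorname{rank} \bA \geq 2$ (the rank of a nonzero skew matrix is even), so $\dim \ker \bA \leq n-2$. In that case $Z$ is a Lebesgue null set; in fact it suffices that $\dim \ker \bA < n$. Because $\bspsi$ is continuous and vanishes a.e. on $\widetilde{\Omega}$, the set $\widetilde{\Omega}$ is contained in $Z$ up to a null set, which contradicts $|\widetilde{\Omega}| > 0$. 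So $\bA = \bzero$, and then $\bfb = \mathbf{0}$ because $\bspsi \equiv \bfb$ vanishes on a set of positive measure. Thus $\widetilde{\mathcal{X}}^{s,p}_{\widetilde{\Omega}}(\Omega) \cap \mathcal{R} = \{\mathbf{0}\}$.

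Applying Corollary \ref{cor:mainKorn1st:Constrained} completes the proof. None of these steps is a real obstacle; the only point requiring care is the measure-zero argument for the zero set of a nonzero affine map, and it does not even need skew-symmetry.
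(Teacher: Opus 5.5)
Your proposal is correct and follows the paper's proof. The paper likewise observes that a rigid motion vanishing a.e. on a set of positive measure must be zero, and then invokes Corollary \ref{cor:mainKorn1st:Constrained}; your explicit closedness check (via a.e.-convergent subsequences) and the null-set argument for the zero set of a nonzero affine map fill in details the paper leaves implicit.
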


\paragraph{Continuous linear functional constraints.}
For Neumann-type nonlocal problems where rigid motions cannot be eliminated by vanishing trace or volume  conditions on the vector field $\bu$, we have the following result.  

\begin{proposition}\label{prop:FunctionalConstraint}
    Under the hypotheses of Corollary \ref{cor:mainKorn1st:Constrained}, let $P : \mathcal{X}^{s,p}(\Omega) \to \mathcal{R}$ be any continuous linear operator such that $P|_{\mathcal{R}} = \mathrm{id}_{\mathcal{R}}$. Define
    \begin{equation}
        \dot{\mathcal{X}}^{s,p}(\Omega) := \left\{ \bu \in \mathcal{X}^{s,p}(\Omega) : P[\bu] = \mathbf{0} \right\}.
    \end{equation}
    Then there exists $C = C(n,s,p,\Omega,P) > 0$ such that \eqref{eq:mainKorn1stCase1} holds for all $\bu \in \dot{\mathcal{X}}^{s,p}(\Omega)$.
\end{proposition}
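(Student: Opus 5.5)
The plan is to deduce Proposition \ref{prop:FunctionalConstraint} directly from Corollary \ref{cor:mainKorn1st:Constrained} by checking its two hypotheses for $\mathring{\mathcal{X}}^{s,p}(\Omega) := \dot{\mathcal{X}}^{s,p}(\Omega) = \ker P$.

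\emph{Closedness.} Since $P : \mathcal{X}^{s,p}(\Omega) \to \mathcal{R}$ is linear and continuous, and $\mathcal{R}$ is finite-dimensional and hence Hausdorff (every norm on it is equivalent, e.g.\ $\vnorm{\cdot}_{\mathrm{aff}}$), the set $\ker P = P^{-1}(\{\mathbf{0}\})$ is a closed linear subspace of $\mathcal{X}^{s,p}(\Omega)$ in the $\mathcal{X}^{s,p}(\Omega)$-norm.

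\emph{Trivial intersection with $\mathcal{R}$.} We first note that $\mathcal{R} \subset \mathcal{X}^{s,p}(\Omega)$. Indeed, affine maps are bounded on the bounded set $\Omega$, and $[\bspsi]_{\mathcal{X}^{s,p}(\Omega)} = 0$ because $(\bA(\by-\bx))\cdot(\by-\bx) = 0$ for skew-symmetric $\bA$. Now let $\bspsi \in \dot{\mathcal{X}}^{s,p}(\Omega) \cap \mathcal{R}$. Then $\bspsi = P[\bspsi] = \mathbf{0}$, using $P|_{\mathcal{R}} = \mathrm{id}_{\mathcal{R}}$ for the first equality and the defining constraint of $\dot{\mathcal{X}}^{s,p}(\Omega)$ for the second. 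Hence $\dot{\mathcal{X}}^{s,p}(\Omega) \cap \mathcal{R} = \{\mathbf{0}\}$.

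\emph{Conclusion.} Both hypotheses of Corollary \ref{cor:mainKorn1st:Constrained} hold, so \eqref{eq:mainKorn1stCase1} holds on $\dot{\mathcal{X}}^{s,p}(\Omega)$ with a constant depending on the subspace, that is, on $n,s,p,\Omega$ and $P$.

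No step here is a genuine obstacle; the substantive work sits inside the corollary. If one wanted a more explicit constant, an alternative route is available. Write $\bu = (\bu - \Pi[\bu]) + \Pi[\bu]$, where $\Pi$ is the projection from Theorem \ref{thm:mainKorn1st} or Theorem \ref{thm:mainKorn1st:Endpoints}. Since $P[\bu] = \mathbf{0}$ and $P$ fixes $\mathcal{R}$, we have $\Pi[\bu] = -P[\bu - \Pi[\bu]]$. This gives $\vnorm{\Pi[\bu]}_{\mathcal{X}^{s,p}} \le \vnorm{P}\, \vnorm{\bu - \Pi[\bu]}_{\mathcal{X}^{s,p}}$, using the equivalence of norms on the finite-dimensional space $\mathcal{R}$. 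One then bounds $\vnorm{\bu - \Pi[\bu]}_{W^{s,p}}$, which controls its $\mathcal{X}^{s,p}$ norm, by the estimate of Theorem \ref{thm:mainKorn1st} or Theorem \ref{thm:mainKorn1st:Endpoints}. The only point needing care in this route is to track the $s$-dependent factors for $p = 1$.
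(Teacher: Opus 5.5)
Your proof is correct and is essentially the paper's. The paper likewise observes that any $\bspsi \in \dot{\mathcal{X}}^{s,p}(\Omega)\cap\mathcal{R}$ satisfies $\bspsi = P[\bspsi] = \mathbf{0}$ and then invokes Corollary~\ref{cor:mainKorn1st:Constrained}, leaving implicit the closedness of $\ker P$ that you check explicitly; your alternative route through $\Pi[\bu] = -P[\bu - \Pi[\bu]]$ is also sound and avoids the compactness argument, giving an explicit constant in terms of $\|P\|$ and the constant of Theorem~\ref{thm:mainKorn1st} (or Theorem~\ref{thm:mainKorn1st:Endpoints}).
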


\paragraph{Trace-zero vector fields.}
Let $\Gamma \subset \partial\Omega$ be a measurable boundary subset with positive $(n-1)$-dimensional Hausdorff measure $\mathcal{H}^{n-1}(\Gamma) > 0$. We define $\mathcal{X}^{s,p}_{0,\Gamma}(\Omega)$ as the closure in $\mathcal{X}^{s,p}(\Omega)$ of the space of smooth vector fields vanishing in a neighborhood of $\overline{\Gamma}$. 
When $\Gamma = \partial\Omega$, we denote $\mathcal{X}^{s,p}_{0,\Gamma}(\Omega)$ simply by $\mathcal{X}^{s,p}_0(\Omega)$. The validity of Korn's first inequality \eqref{eq:mainKorn1stCase1} on $\mathcal{X}^{s,p}_0(\Omega)$ was analyzed by the first and third authors in \cite{HM2023} for $C^1$ domains when $sp > 1$. They also demonstrated via a Kirszbraun-type construction \cite{Kirzbraun1934} that \eqref{eq:mainKorn1stCase1} fails for \textit{any} open bounded $C^1$ domain whenever $sp < 1$.

Through Theorem \ref{thm:mainKorn1st} and Theorem \ref{thm:mainKorn2nd}, we recover and extend these trace results to arbitrary bounded Lipschitz domains, including the critical boundary case $sp = 1$. By norm equivalence, $\mathcal{X}^{s,p}_0(\Omega) = W^{s,p}_0(\Omega;\mathbb{R}^n)$. When $sp > 1$, vector fields in $W^{s,p}_0(\Omega;\mathbb{R}^n)$ possess vanishing trace on $\partial\Omega$, which forces $\mathcal{R} \cap \mathcal{X}^{s,p}_0(\Omega) = \{\mathbf{0}\}$ and yields \eqref{eq:mainKorn1stCase1}. Conversely, when $sp \le 1$, 
$W^{s,p}_0(\Omega;\mathbb{R}^n) = W^{s,p}(\Omega;\mathbb{R}^n)$ (see \cite{leoni2023first}), which implies $\mathcal{R} \subset \mathcal{X}^{s,p}_0(\Omega)$ and causes \eqref{eq:mainKorn1stCase1} to fail. We state this dichotomy precisely:

\begin{proposition}\label{prop:ZeroTraceSubspace}
    Under the hypotheses of Corollary \ref{cor:mainKorn1st:Constrained}, let $\Gamma \subset \partial\Omega$ satisfy $\mathcal{H}^{n-1}(\Gamma) > 0$.
    \begin{enumerate}
        \item[\textbf{(a)}] If $sp > 1$, there exists $C = C(n,s,p,\Omega,\Gamma) > 0$ such that
        \begin{equation}\label{eq:ZeroTraceEst}
            \|\bu\|_{W^{s,p}(\Omega)}^p \leq C [\bu]_{\mathcal{X}^{s,p}(\Omega)}^p \quad \text{for all } \bu \in \mathcal{X}^{s,p}_{0,\Gamma}(\Omega).
        \end{equation}
        \item[\textbf{(b)}] If $sp \le 1$, inequality \eqref{eq:mainKorn1stCase1} fails on $\mathcal{X}^{s,p}_{0,\Gamma}(\Omega)$. 
    \end{enumerate}
\end{proposition}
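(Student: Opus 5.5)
Part (a) follows from Corollary \ref{cor:mainKorn1st:Constrained}, so I will check its two hypotheses for $\mathcal{X}^{s,p}_{0,\Gamma}(\Omega)$: that it is a closed subspace, and that it meets $\mathcal{R}$ only in $\mathbf{0}$.

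\emph{Closedness.} This holds by definition, since $\mathcal{X}^{s,p}_{0,\Gamma}(\Omega)$ is a closure in $\mathcal{X}^{s,p}(\Omega)$.

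\emph{Reduction to $W^{s,p}$.} By Theorem \ref{thm:mainKorn2nd}, the $\mathcal{X}^{s,p}$ and $W^{s,p}$ norms are equivalent. Hence $\mathcal{X}^{s,p}_{0,\Gamma}(\Omega)$ coincides with the $W^{s,p}$-closure $W^{s,p}_{0,\Gamma}(\Omega;\bbR^n)$ of smooth fields vanishing near $\overline{\Gamma}$.

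\emph{Trivial intersection for $sp>1$.} On a bounded Lipschitz domain the trace operator $T: W^{s,p}(\Omega)\to L^p(\partial\Omega)$ is continuous when $sp>1$. It vanishes on $\Gamma$ for smooth fields vanishing near $\overline\Gamma$, so by continuity $T\bu=0$ $\mathcal{H}^{n-1}$-a.e. on $\Gamma$ for every $\bu\in\mathcal{X}^{s,p}_{0,\Gamma}(\Omega)$. Now take $\bspsi(\bx)=\bA\bx+\bfb\in\mathcal{R}$ in this space. It is continuous up to the boundary, so its trace is its restriction, and therefore $\bA\bx+\bfb=0$ on $\Gamma$.

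\emph{Forcing $\bspsi=0$.} This is the step needing care. If $\bA\neq 0$, then $\bA$ is skew, so its kernel $\ker\bA$ has dimension at most $n-2$. The zero set of $\bspsi$ is either empty or an affine translate of $\ker\bA$, hence of dimension at most $n-2$. Such a set has $\mathcal{H}^{n-1}$-measure zero, contradicting $\mathcal{H}^{n-1}(\Gamma)>0$. So $\bA=0$, and then $\bfb=0$ as well. With both hypotheses verified, Corollary \ref{cor:mainKorn1st:Constrained} gives \eqref{eq:ZeroTraceEst}. The constant depends on the subspace, and hence on $n,s,p,\Omega,\Gamma$.

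\emph{Part (b).} For $sp\le 1$ I would show that $\mathcal{R}\subset\mathcal{X}^{s,p}_{0,\Gamma}(\Omega)$. Every rigid motion $\bspsi\neq0$ satisfies $[\bspsi]_{\mathcal{X}^{s,p}}=0$ while $\|\bspsi\|_{W^{s,p}}>0$, so \eqref{eq:mainKorn1stCase1} cannot hold. The key input is density: for bounded Lipschitz $\Omega$ and $sp\le1$, $C_c^\infty(\Omega)$ is dense in $W^{s,p}(\Omega)$ \cite{leoni2023first}, so $W^{s,p}_0=W^{s,p}$. Since $C_c^\infty(\Omega;\bbR^n)$ fields vanish near $\partial\Omega\supset\overline\Gamma$, it follows that $W^{s,p}(\Omega;\bbR^n)\subset W^{s,p}_{0,\Gamma}\subset W^{s,p}$. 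By the norm equivalence of Theorem \ref{thm:mainKorn2nd}, this gives $\mathcal{X}^{s,p}_{0,\Gamma}(\Omega)=\mathcal{X}^{s,p}(\Omega)\supset\mathcal{R}$.

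The main obstacle is the critical case $sp=1$, where I rely on the cited density result holding for Lipschitz (not just smooth) domains. If it is only available for smoother domains, I would instead approximate the constant and linear fields directly by cutoffs $\bspsi\,\eta_\varepsilon$, using the standard logarithmic cutoff adapted to the boundary distance, whose $W^{1/p,p}$ seminorm of $1-\eta_\varepsilon$ tends to zero.
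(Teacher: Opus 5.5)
Your proof is correct and follows the paper's argument. For $sp>1$ you use the trace to get $\mathcal{X}^{s,p}_{0,\Gamma}(\Omega)\cap\mathcal{R}=\{\mathbf{0}\}$ and then invoke Corollary~\ref{cor:mainKorn1st:Constrained}; for $sp\le 1$ you use $W^{s,p}_0=W^{s,p}$ to place $\mathcal{R}$ inside the space, and your kernel-dimension argument for a nonzero skew $\mathbf{A}$ supplies the step the paper leaves implicit (why a rigid motion vanishing on a set of positive $\mathcal{H}^{n-1}$-measure must be zero).
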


\paragraph{The critical case \texorpdfstring{$sp=1$}{sp=1} and Lions-Magenes spaces.}
When $sp \neq 1$, vector fields belonging to $W^{s,p}_0(\Omega;\mathbb{R}^n)$ extended by zero outside $\Omega$ belong to $W^{s,p}(\mathbb{R}^n;\mathbb{R}^n)$. However, in the critical case $sp=1$, extension by zero of functions in $W^{s,1/s}_0(\Omega;\mathbb{R}^n)$ may fail to lie in $W^{s,1/s}(\mathbb{R}^n;\mathbb{R}^n)$. On Lipschitz domains, the subspace of vector fields whose zero extensions belong to $W^{s,1/s}(\mathbb{R}^n;\mathbb{R}^n)$ forms a proper subspace denoted by $W^{s,1/s}_{00}(\Omega;\mathbb{R}^n)$, known as the Lions-Magenes space \cite{LionsPeetre1964,LionsMagenes1968}. Equipped with the norm
\begin{equation}
    \|\bu\|_{W^{s,1/s}_{00}(\Omega)}^{1/s} := \int_{\Omega} \int_{\mathbb{R}^n \setminus \Omega} \frac{|\bu(\bx)|^{1/s}}{|\bx - \by|^{n+1}} \, \mathrm{d}\by \, \mathrm{d}\bx + [\bu]_{W^{s,1/s}(\Omega)}^{1/s}, 
\end{equation}
$W^{s,1/s}_{00}(\Omega;\mathbb{R}^n)$ is a Banach space. The $W^{s,1/s}_{00}(\Omega)$-norm is strictly stronger than the standard $W^{s,1/s}(\Omega)$-norm, see \cite[Section 2.7]{leoni2023first} for a recent and self-contained reference. 
As a consequence, an estimate of the form \eqref{eq:mainKorn1stCase1} on $W^{s,1/s}(\Omega)$ would not capture the full $W^{s,1/s}_{00}(\Omega)$-norm. Nevertheless, we establish a specialized fractional Korn's inequality controlling the complete $W^{s,1/s}_{00}(\Omega)$ norm:

\begin{proposition}\label{prop:LionsMagenesKorn}
    Under the hypotheses of Corollary \ref{cor:mainKorn1st:Constrained}, let $sp=1$. Define $\mathcal{X}^{s,1/s}_{00}(\Omega) := \{ \bu \in \mathcal{X}^{s,1/s}(\Omega) : \widetilde{\bu} \in \mathcal{X}^{s,1/s}(\mathbb{R}^n) \}$, where $\widetilde{\bu}$ denotes the extension of $\bu$ to $\mathbb{R}^n \setminus \Omega$ by $\mathbf{0}$. Then there exists $C = C(n,s,\Omega) > 0$ such that
    \begin{equation}
        \|\bu\|_{W^{s,1/s}_{00}(\Omega)} \leq C \|\bu\|_{\mathcal{X}^{s,1/s}_{00}(\Omega)} \quad \text{for all } \bu \in \mathcal{X}^{s,1/s}_{00}(\Omega),
    \end{equation}
    where $\displaystyle \|\bu\|_{\mathcal{X}^{s,1/s}_{00}(\Omega)}^{1/s} := \int_{\Omega} \int_{\mathbb{R}^n \setminus \Omega} \frac{\left| \bu(\bx) \cdot \frac{\bx-\by}{|\bx-\by|} \right|^{1/s}}{|\bx-\by|^{n+1}} \, \mathrm{d}\by \, \mathrm{d}\bx + [\bu]_{\mathcal{X}^{s,1/s}(\Omega)}^{1/s}$.
\end{proposition}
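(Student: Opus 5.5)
The natural route is to apply the whole-space fractional Korn inequality to the zero extension $\widetilde{\bu}$; alternatively one can work on $\Omega$ using Theorem \ref{thm:mainKorn2nd} (with $p=1/s$) and control the boundary term separately. The plan goes through the full space. For $\bu \in \mathcal{X}^{s,1/s}_{00}(\Omega)$, the hypothesis says $\widetilde{\bu} \in \mathcal{X}^{s,1/s}(\mathbb{R}^n)$. Splitting $\mathbb{R}^n\times\mathbb{R}^n$ into $\Omega\times\Omega$, the two mixed pieces $\Omega\times\Omega^c$ and $\Omega^c\times\Omega$, and $\Omega^c\times\Omega^c$ (where the integrand vanishes), we get
\begin{equation*}
[\widetilde{\bu}]_{\mathcal{X}^{s,1/s}(\mathbb{R}^n)}^{1/s} = [\bu]_{\mathcal{X}^{s,1/s}(\Omega)}^{1/s} + 2\int_\Omega\int_{\mathbb{R}^n\setminus\Omega} \frac{\left|\bu(\bx)\cdot\frac{\bx-\by}{|\bx-\by|}\right|^{1/s}}{|\bx-\by|^{n+1}}\,\mathrm{d}\by\,\mathrm{d}\bx \le 2\|\bu\|_{\mathcal{X}^{s,1/s}_{00}(\Omega)}^{1/s}.
\end{equation*}
The same splitting applied to the full Gagliardo seminorm gives $[\widetilde{\bu}]_{W^{s,1/s}(\mathbb{R}^n)}^{1/s} \ge \|\bu\|_{W^{s,1/s}_{00}(\Omega)}^{1/s}$. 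So it suffices to prove
\begin{equation*}
[\widetilde{\bu}]_{W^{s,1/s}(\mathbb{R}^n)} \le C [\widetilde{\bu}]_{\mathcal{X}^{s,1/s}(\mathbb{R}^n)}.
\end{equation*}

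This is the fractional Korn inequality on all of $\mathbb{R}^n$ from \cite{MScott2018Korn}, valid for $1<p<\infty$ and $s\in(0,1)$, hence for $p=1/s$. It is stated for functions in $W^{s,p}(\mathbb{R}^n)$, or more generally for $L^p$ fields with finite projected seminorm. We need $\widetilde{\bu}\in L^{p}(\mathbb{R}^n)$, which holds since $\bu\in L^p(\Omega)$. If the cited inequality requires a priori $\widetilde{\bu}\in W^{s,p}(\mathbb{R}^n)$, I would instead argue as follows. By Theorem \ref{thm:mainKorn2nd}, $\bu\in W^{s,p}(\Omega)$. The remaining task is to show $\int_\Omega |\bu|^p \int_{\Omega^c}|\bx-\by|^{-n-1}\,\mathrm{d}\by\,\mathrm{d}\bx<\infty$, and this quantity is exactly the one we are trying to bound. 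I would handle it by mollification/approximation: compare $\widetilde{\bu}$ with $\widetilde{\bu}_\varepsilon = \widetilde{\bu}*\rho_\varepsilon$. These lie in $W^{s,p}(\mathbb{R}^n)$ as long as $\widetilde{\bu}\in L^p$ with compact support. Convolution does not increase the projected seminorm, because the difference $\widetilde{\bu}(\bx-\bz)-\widetilde{\bu}(\by-\bz)$ has the same direction vector $\bx-\by$, and Minkowski's inequality applies. Applying the whole-space inequality to $\widetilde{\bu}_\varepsilon$ and letting $\varepsilon\to0$ with Fatou's lemma then gives the bound for $\widetilde{\bu}$.

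Combining the two displays:
\begin{equation*}
\|\bu\|_{W^{s,1/s}_{00}(\Omega)}^{1/s} \le [\widetilde{\bu}]^{1/s}_{W^{s,1/s}(\mathbb{R}^n)} \le C[\widetilde{\bu}]^{1/s}_{\mathcal{X}^{s,1/s}(\mathbb{R}^n)} \le 2C\|\bu\|^{1/s}_{\mathcal{X}^{s,1/s}_{00}(\Omega)}.
\end{equation*}
This is the claim, with a constant depending only on $n$ and $s$.

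The main obstacle is making the whole-space Korn inequality applicable without assuming the conclusion ($\widetilde{\bu}\in W^{s,p}$), which the mollification argument addresses. If the norm in \cite{MScott2018Korn} is only homogeneous, one must also check that no $L^p$ term is needed. That holds because the zero set of the seminorm on $\mathbb{R}^n$ within $L^p(\mathbb{R}^n)$ is trivial: rigid motions are not in $L^p$ unless they vanish.
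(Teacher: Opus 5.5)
Your proof is correct and is essentially the paper's: you extend by zero and split the whole-space seminorms over $\Omega\times\Omega$ and the two mixed pieces, which gives $\|\bu\|_{W^{s,1/s}_{00}(\Omega)}^{1/s}\le[\widetilde{\bu}]_{W^{s,1/s}(\mathbb{R}^n)}^{1/s}$ and $[\widetilde{\bu}]_{\mathcal{X}^{s,1/s}(\mathbb{R}^n)}^{1/s}\le 2\|\bu\|_{\mathcal{X}^{s,1/s}_{00}(\Omega)}^{1/s}$, and then insert the whole-space fractional Korn inequality of \cite{MScott2018Korn} in between. Your mollification step (convolution does not increase $[\cdot]_{\mathcal{X}^{s,1/s}(\mathbb{R}^n)}$, followed by Fatou) justifies applying the whole-space inequality without knowing $\widetilde{\bu}\in W^{s,1/s}(\mathbb{R}^n)$ in advance, which the paper passes over; your closing remark is unnecessary because the cited inequality is already homogeneous, and its justification is wrong (a trivial zero set does not remove an $L^p$ term on $\mathbb{R}^n$, whereas scaling $\bu(\lambda\,\cdot)$ with $\lambda\to\infty$ would).
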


\subsection{Quantitative improvements of fractional Korn's inequalities}\label{subsec:narrative-strategy}
We describe the roadmap followed in this work to establish the fractional Korn's inequalities. First, the fractional Korn's second inequality and the resulting space equivalence (Theorem~\ref{thm:mainKorn2nd}) are deduced from \eqref{eq:mainKorn1stCase2:Proj} via the boundedness of the explicit projection operator from $L^p$ onto $\cR$.
The fractional Korn's first constrained inequalities (Corollary \ref{cor:mainKorn1st:Constrained} and Propositions \ref{prop:PeridynamicCollar}--\ref{prop:LionsMagenesKorn}) are then derived from Theorem \ref{thm:mainKorn2nd} via compactness arguments. The primary analytical challenge is therefore proving the fractional Korn's first unconstrained inequalities \eqref{eq:mainKorn1stCase2:Proj} for $p \in (1,\infty)$ and \eqref{eq:mainKorn1stCase2:Proj:Endpoints} for $p \in \{1,\infty\}$.

We prove these inequalities in two major steps. First, we prove \eqref{eq:mainKorn1stCase2:Proj} and \eqref{eq:mainKorn1stCase2:Proj:Endpoints} in the case that $\Omega$ is a Euclidean ball. 
Qualitatively, the related estimate \eqref{eq:mainKorn1stCase2} has already been established in previous works by (a subset of) the authors \cite{MS2022}. In those works, the result relies on constructing a bounded linear extension operator from $B$ to all of $\bbR^n$ that preserves the $\cX^{s,p}$-seminorm, and then appealing to a fractional Korn-type inequality on $\bbR^n$ proved by (a subset of) the authors in \cite{MScott2018Korn}. The universal constants in this entire space Korn inequality blow up as $s \to 1$, and so any inequality whose proof makes use of the whole-space estimate as an ingredient inherits this degeneracy. For that reason, none of the results in this work make use of any extension arguments. Our methods depart from existing techniques in the literature. The inequality is ``built from scratch'' via Fredholm theory on $L^p$-bounded operators which characterize projected differences. These operators are then localized on dyadic decompositions around the diagonal $\bx=\by$ of the Gagliardo seminorms; the resulting scaling gives the explicit dependence on the differentiability $s$.

To further illustrate the dependence of \eqref{eq:mainKorn1stCase2:Proj} and \eqref{eq:mainKorn1stCase2:Proj:Endpoints} on $s$, the explicitly-constructed projection operator $\Pi : L^1(\Omega) \to \cR$ defined in Lemma \ref{lem:ProjectionBall} is shown to be continuous. Therefore, $\bu - \Pi[\bu]$ belongs to $\cX^{s,p}$ whenever $\bu \in \cX^{s,p}$, and the inequalities hold for the same projection $\Pi$ for the entire range of $s$ and $p$. This is necessary for showing consistency in the nonlocal-to-local limit as well as passing to the limit in the compactness arguments of the constrained Korn inequalities.

The second major step is to prove
use \eqref{eq:mainKorn1stCase2:Proj} and \eqref{eq:mainKorn1stCase2:Proj:Endpoints} in the case of bounded Lipschitz domains. For $p = \infty$ the Korn inequality is proved directly, while for $p < \infty$ the inequality is first proved on the bi-Lipschitz image of a ball, making use of a locally-finite covering argument. It bears similarity to a Whitney decomposition, but uses Euclidean balls rather than disjoint cubes, so that the same Korn inequalities established in the first major step can be used. The result (Theorem \ref{thm:KornBiLipBall}) additionally uses a fractional norm characterization for matrices in order to control the difference in rigid motions across overlapping elements of the cover.

We highlight key differences between our approach and the recent preprint \cite{Duran2026}, which proves a fractional Korn's inequality of the form \eqref{eq:mainKorn1stCase2} for $p \in (1, \infty)$ on the more general class of uniform domains.
A primary distinction is the choice of projection onto $\mathcal{R}$. In \cite{Duran2026}, the projection is bounded only on the smaller space $W^{s,p}$, making it unclear how to directly deduce the second fractional Korn's inequality from \eqref{eq:mainKorn1stCase2}.  Moreover, their result applies solely to vector fields  
in $W^{s,p}(\Omega)$,  
and not to those that \textit{a priori} reside only in $\cX^{s,p}(\Omega)$. The use of the projection $\Pi$ removes both these difficulties in our arguments. 
The global covering arguments in this work and \cite{Duran2026} also differ in their implementation. Rather than smoothing Whitney cubes and controlling energies on the chains via a global discrete Poincar{\'e} inequality as in \cite{Duran2026}, we employ a covering using Euclidean balls from \cite{Kohn1982} with established chaining properties and rely on local matrix norm comparisons on intersecting elements. 

The paper is organized as follows:
In Section \ref{sec:ball}, we construct the explicit projection operator $\Pi_B$ and prove the strengthened fractional Korn's first unconstrained inequality on Euclidean balls (Theorem \ref{thm:KornBallStrong}) for $p \in (1,\infty)$. The cases $p = 1$ and $p = \infty$ are proved in Sections \ref{subsec:1} and \ref{subsec:infty}, respectively.
In Section \ref{sec:lipschitz}, we establish the Korn inequality on bi-Lipschitz images of balls (Theorem \ref{thm:KornBiLipBall}) and complete the proofs of Theorems \ref{thm:mainKorn1st}-\ref{thm:mainKorn2nd}
for bounded Lipschitz domains.
Finally, in Section \ref{sec:subspaces}, we present the proofs of the subspace Korn inequalities and characterizations (Corollary \ref{cor:mainKorn1st:Constrained} and Propositions \ref{prop:ZeroTraceSubspace}--\ref{prop:LionsMagenesKorn}).

\paragraph{Acknowledgments} 
J.M.S. acknowledges support from NSF DMS-2606364. T.M.'s research is supported by NSF DMS-2509059. The authors would like to thank Davit Harutyunyan and Hayk Mikayelyan for their contributions during the initial stages of portions of the work presented in Section \ref{sec:lipschitz}.

\paragraph{AI Disclosure} The results of Sections \ref{subsec:p}, \ref{subsec:1}, and \ref{subsec:infty} were found by modifying outputs from a series of prompts placed into ChatGPT-5.6 Sol; all other results of the manuscript were found without the use of LLMs. The entire manuscript was written by the authors. The authors take full responsibility for the correctness of all the results and proofs of this paper.

\section{Fractional Korn's first inequality on Euclidean balls}\label{sec:ball}

In \cite{MS2022-HMMS}, it was shown that for any ball $B(\bx_0, R) \subset \mathbb{R}^n$, a fractional analog  of Korn's first 
unconstrained inequality holds: 
specifically, for $p \in (1,\infty)$, there exists $C = C(n, p,s) > 0$ such that  \[\inf_{\bspsi\in \mathcal{R}} \|\bu - \bspsi\|_{W^{s,p}(B(\bx_0, R))} \leq C [\bu]_{\mathcal{X}^{s,p}(B(\bx_0, R))}\]
for all $\bu \in W^{s,p}(B(\bx_0, R))$.
Here we strengthen this inequality in three ways. First, we show that the dependence of the constant $C$ on $s$ can be chosen so that the fractional inequality recovers the classical Korn's inequality for $\bu \in W^{1,p}(B(\bx_0, R);\bbR^n)$ as $s\to 1$. 
Second, we improve the fractional Korn's first inequality by replacing the infimum over $\mathcal{R}$ with an explicit constructed projection operator $\Pi_{B(\bx_0, R)}$, while at the same time still ensuring the constant is stable as $s\to 1$. Third, we show that the inequality additionally holds true for $p = 1$ and $p = \infty$, while capturing the blow up of the constant $C$ as $s \to 1$, thus remaining consistent with the lack of classical Korn's inequalities in those cases.

Throughout this section, we set  $B = B(0,1) \subset \mathbb{R}^n$ be the unit ball. 

\subsection{Construction of the projection operator \texorpdfstring{$\Pi_{B}$}{}}
We recall the space of infinitesimal rigid motions 
\begin{equation*}
    \mathcal{R} := \{ \bspsi(\bx) = \mathbf{A}\bx + \mathbf{b} : \bA \in \bbR^{n \times n}, \, \mathbf{b} \in \mathbb{R}^n, \, \mathbf{A} + \bA^T = \bzero \}.
\end{equation*}
This is a finite dimensional space with norm $\|\mathbf{A}\bx+\mathbf{b}\|_{\mathrm{aff}} := |\mathbf{A}| + |\mathbf{b}|$, where $|\mathbf{A}|$ represents any matrix norm. Moreover, since all norms are equivalent in this space, for any affine map $\bspsi = \bA \bx + \bfb$, elementary estimates and scaling arguments show that for all $n \in \bbN$ and for all $1\leq p\leq \infty$
    \begin{equation}
        \frac{1}{C(n,p)} \vnorm{\bspsi}_{L^p(B}  \leq \vnorm{\bspsi}_{\mathrm{aff}} \leq C(n,p) \vnorm{\bspsi}_{L^p(B} 
    \end{equation}
    and for all $R > 0$
\begin{equation}\label{eq:AffineMapEquivalence:Rescaled}
        \frac{1}{C(n,p) R^{n/p} \max\{1,R\} } \vnorm{\bspsi}_{L^p(B(0,R))} \leq  \vnorm{\bspsi}_{\mathrm{aff}} \leq \frac{C(n,p)}{ R^{n/p} \min\{1,R\} } \vnorm{\bspsi}_{L^p(B(0,R))}.
    \end{equation}
    By simple translation, we also have that for $\bx_0 \in \bbR^n$
\begin{equation}\label{eq:AffineMapEquivalence:Rescaled:MatrixPart}
        \frac{1}{C(n,p) R^{n/p+1}} \vnorm{\bspsi}_{L^p(B(\bx_0,R))} \leq  |\bA| \leq \frac{C(n,p)}{R^{n/p+1} } \vnorm{\bspsi}_{L^p(B(\bx_0,R))},
    \end{equation}
    where we take $n/p = 0$ in the case $p = \infty$.

We are now ready to define the projection operator $\Pi_{B} : L^1(B;\mathbb{R}^n) \to \mathcal{R}$. 
First, define the average of $\bu$ on $B$ as 
    \begin{equation}
        \mathbf{\varrho}(\bu) := \fint_{B} \bu(\bx) \, \mathrm{d}\bx = \frac{1}{|B|}\int_{B} \bu(\bx) \, \mathrm{d}\bx,
    \end{equation}
    and let $\mathbf{\Sigma}(\bu) = (\sigma^{ij}(\bu))_{1 \leq i,j \leq n}$ be the matrix with entries
    \begin{equation}
        \sigma^{ij}(\bu) := \frac{1}{\kappa_n} \int_{B(0,1) \cap \{x_j > 0\}} \big( u^i(\bx) - \varrho^i(\bu) \big) \, \mathrm{d}\bx,
    \end{equation}
    where $\kappa_n$ is a normalization constant  \footnote{In fact, $\kappa:= \frac{\omega_{n-2}}{(n-1)(n+1)}$, where $\omega_d$ being the surface area of $\mathbb{S}^{d-1} \subset \mathbb{R}^n$} so that 
   $\displaystyle\int_{B \cap \{x_j > 0\}} x_j \, \mathrm{d}\bx = \kappa_n$. Set
    \begin{equation}\label{eq:CanonProjFormula}
        \Pi_{B}[\bu](\bx) := \frac{\mathbf{\Sigma}(\bu) - \mathbf{\Sigma}(\bu)^\top}{2} \bx + \mathbf{\varrho}(\bu),\qquad \bx\in \mathbb{R}^n.
    \end{equation}
For a general ball $B(\bx_0, R) \subset \mathbb{R}^n$, the operator $\Pi_{B(\bx_0,R)} : L^1(B(\bx_0,R);\mathbb{R}^n) \to \mathcal{R}$ defined by
    \begin{equation}
        \Pi_{B(\bx_0,R)}[\bu](\bx) := \Pi_{B}[\mathbf{v}]\left( \frac{\bx - \bx_0}{R} \right), \quad \text{where } \mathbf{v}(\mathbf{z}) = \bu(\bx_0 + R\mathbf{z}). 
    \end{equation}
    \begin{lemma}\label{lem:ProjectionBall}
    The operator  $\Pi_{B}$ is a continuous linear operator from $L^1(B;\mathbb{R}^n)$ onto $\mathcal{R}$ satisfying
    \begin{equation}
        \|\Pi_{B}[\bu]\|_{\mathrm{aff}} \leq C(n) \|\bu\|_{L^1(B)}.
    \end{equation}
  Furthermore, for any affine map $\bspsi(\bx) = \mathbf{A}\bx + \mathbf{b}$,
    \begin{equation}
        \Pi_{B}[\bspsi](\bx) = \frac{\mathbf{A} - \mathbf{A}^\top}{2} \bx + \mathbf{b}.
    \end{equation}
    In particular, $\Pi_{B}[\bspsi] = \bspsi$ for all $\bspsi\in \mathcal{R}$.
    
    For a general ball $B(\bx_0, R)$, similar results hold: $\Pi_{B(\bx_0,R)}[\bspsi] = \bspsi$ for all $\bspsi\in \mathcal{R}$ and for all $1\leq p\leq \infty$, 
    \begin{equation}\label{eq:ProjContLp}
        \|\Pi_{B(\bx_0,R)}[\bu]\|_{L^p(B(\bx_0,R))} \leq  C(n,p) \|\bu\|_{L^p(B(\bx_0,R))}. 
    \end{equation}
   
\end{lemma}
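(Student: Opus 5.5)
The plan is a direct computation on the unit ball $B$, followed by a scaling argument for general balls $B(\bx_0,R)$.

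\emph{Linearity and the $L^1$ bound.} Both $\varrho$ and $\Sigma$ are linear in $\bu$, so $\Pi_B$ is linear. The bound follows from
\[
|\varrho(\bu)| \le |B|^{-1}\|\bu\|_{L^1(B)}, \qquad |\sigma^{ij}(\bu)| \le \kappa_n^{-1}\bigl(\|u^i\|_{L^1(B)} + |B|\,|\varrho^i(\bu)|\bigr) \le 2\kappa_n^{-1}\|\bu\|_{L^1(B)} .
\]
Since the skew part of a matrix has norm at most that of the matrix, this gives $\|\Pi_B[\bu]\|_{\mathrm{aff}}\le C(n)\|\bu\|_{L^1(B)}$. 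The range lies in $\cR$ because $(\Sigma-\Sigma^\top)/2$ is skew by construction. Surjectivity will follow from the reproduction property below.

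\emph{Action on affine maps.} Take $\bspsi(\bx)=\bA\bx+\bfb$. Because $\int_B x_k\,d\bx=0$ by symmetry, we get $\varrho(\bspsi)=\bfb$. Hence
\[
\sigma^{ij}(\bspsi)=\kappa_n^{-1}\sum_k A_{ik}\int_{B\cap\{x_j>0\}} x_k\,d\bx .
\]
For $k\ne j$ the integral vanishes, since the half-ball is invariant under $x_k\mapsto -x_k$. For $k=j$ it equals $\kappa_n$ by the choice of normalization. Thus $\Sigma(\bspsi)=\bA$, and therefore $\Pi_B[\bspsi]=\frac{\bA-\bA^\top}{2}\bx+\bfb$. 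If $\bA$ is skew, this is just $\bspsi$. This is the only genuinely computational step, and it is where the normalization $\kappa_n$ enters; the rest is bookkeeping.

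\emph{General balls and $L^p$ continuity.} Write $\bv(\mathbf z)=\bu(\bx_0+R\mathbf z)$. If $\bspsi\in\cR$, then $\bv(\mathbf z)=R\bA\mathbf z+(\bA\bx_0+\bfb)$ is again a rigid motion, so $\Pi_B[\bv]=\bv$. Substituting $\mathbf z=(\bx-\bx_0)/R$ then returns $\bspsi$, which proves reproduction on $B(\bx_0,R)$. For \eqref{eq:ProjContLp}, first work on $B$. By the norm equivalence on affine maps stated before the lemma, then the bound just proved, then Hölder on the bounded set $B$,
\[
\|\Pi_B[\bv]\|_{L^p(B)} \le C(n,p)\,\|\Pi_B[\bv]\|_{\mathrm{aff}} \le C(n)\,\|\bv\|_{L^1(B)} \le C(n,p)\,\|\bv\|_{L^p(B)} .
\]
Under the change of variables,
\[
\|\Pi_{B(\bx_0,R)}[\bu]\|_{L^p(B(\bx_0,R))}=R^{n/p}\,\|\Pi_B[\bv]\|_{L^p(B)}, \qquad \|\bu\|_{L^p(B(\bx_0,R))}=R^{n/p}\,\|\bv\|_{L^p(B)} .
\]
The factors $R^{n/p}$ cancel, so the constant is independent of $\bx_0$ and $R$. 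For $p=\infty$ the same argument works with $n/p=0$.
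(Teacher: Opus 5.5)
Your proposal is correct and follows the paper's own argument. Both use the moment identity $\int_{B\cap\{x_j>0\}} x_k\,\mathrm{d}\bx=\kappa_n\delta_{jk}$ to get $\Sigma(\bspsi)=\mathbf{A}$, and both derive continuity from the triangle and H\"older inequalities. Your explicit rescaling to $B(\bx_0,R)$, where you check that the factors $R^{n/p}$ cancel so the constant in \eqref{eq:ProjContLp} is uniform in $\bx_0$ and $R$, fills in a step the paper only asserts as ``similar results hold.''
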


\begin{proof}
    Direct computation in polar coordinates verifies $\displaystyle\int_{B(0,1) \cap \{x_j > 0\}} x_k \, \mathrm{d}\bx = \kappa_n \delta_{jk}$. For an affine function $\bspsi(\bx) = \mathbf{A}\bx + \mathbf{b}$, we have $\mathbf{\varrho}(\bspsi) = \mathbf{b}$ and $\displaystyle\sigma^{ij}(\bspsi) = \frac{1}{\kappa_n} \int_{B(0,1) \cap \{x_j > 0\}} [\mathbf{A}\bx]^i \, \mathrm{d}\bx = A_{ij}$. Thus $\mathbf{\Sigma}(\bspsi) = \mathbf{A}$, so \eqref{eq:CanonProjFormula} yields $\displaystyle\Pi_{B(0,1)}[\bspsi] = \frac{\mathbf{A}-\mathbf{A}^\top}{2}\bx + \mathbf{b}$. If $\mathbf{A} \in \mathrm{Skew}(n)$, then $\displaystyle\frac{\mathbf{A}-\mathbf{A}^\top}{2} = \mathbf{A}$, so $\displaystyle\Pi_{B(0,1)}[\bspsi] = \bspsi$. Continuity on $L^1$ follows immediately from the triangle and Hölder inequalities.
\end{proof}

We will also use the following relationship between the $W^{s,p}$ seminorm and the affine norm.

\begin{lemma}
    For any matrix $\bA \in \bbR^{n \times n}$, $s \in (0,1)$, and $p \in [1,\infty)$,
    \begin{equation}\label{eq:NormCompEquiv}
        C_1(n,p) |\bA| \leq \left( (1-s) \int_{B} \fint_{B} \frac{|\bA \bx - \bA \by|^p}{|\bx-\by|^{n+sp} } \, \rmd \by \, \rmd \bx \right)^{1/p} \leq C_2(n,p) |\bA|.
    \end{equation}
\end{lemma}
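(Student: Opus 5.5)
By linearity, $|\bA\bx-\bA\by| = |\bA(\bx-\by)|$, so the quantity inside the parentheses is
\[
I(\bA) := (1-s)\int_B \fint_B \frac{|\bA(\bx-\by)|^p}{|\bx-\by|^{n+sp}}\,\rmd\by\,\rmd\bx .
\]
The plan is to reduce the double integral to the integral over directions of $|\bA\boldsymbol{\theta}|^p$, with the factor $(1-s)$ exactly cancelling the radial singularity. Once this is done, both inequalities follow from the equivalence of norms on the finite-dimensional space $\bbR^{n\times n}$.

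\textbf{Upper bound.} Use $|\bA(\bx-\by)|\le |\bA|\,|\bx-\by|$ (up to a constant depending on the chosen matrix norm). This gives
\[
I(\bA)\le C|\bA|^p (1-s)\int_B\fint_B |\bx-\by|^{p-sp-n}\,\rmd\by\,\rmd\bx .
\]
For fixed $\bx\in B$ we have $B\subset B(\bx,2)$, so the inner integral is at most
\[
|B|^{-1}\omega_{n-1}\int_0^2 r^{p(1-s)-1}\,\rmd r = C\,\frac{2^{p(1-s)}}{p(1-s)} .
\]
Multiplying by $(1-s)$ then yields $I(\bA)\le C_2(n,p)^p|\bA|^p$, uniformly in $s\in(0,1)$.

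\textbf{Lower bound.} Restrict $\bx$ to $B(0,1/2)$. For such $\bx$ we have $B(\bx,1/2)\subset B$. Passing to polar coordinates $\by=\bx+r\boldsymbol{\theta}$ with $r<1/2$ and using homogeneity,
\[
I(\bA)\ge (1-s)\,|B|^{-1}\,|B(0,1/2)|\int_0^{1/2} r^{p(1-s)-1}\,\rmd r\int_{\mathbb{S}^{n-1}}|\bA\boldsymbol{\theta}|^p\,\rmd\sigma(\boldsymbol{\theta}).
\]
The radial integral equals $\frac{2^{-p(1-s)}}{p(1-s)}$. After multiplying by $(1-s)$ this is at least $\frac{2^{-p}}{p}$, again bounded below uniformly in $s$.

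It remains to see that $\bA\mapsto\left(\int_{\mathbb{S}^{n-1}}|\bA\boldsymbol{\theta}|^p\,\rmd\sigma\right)^{1/p}$ is a norm on $\bbR^{n\times n}$. Homogeneity and the triangle inequality come from Minkowski's inequality. For definiteness, suppose the integral vanishes; then $\bA\boldsymbol{\theta}=0$ for a.e.\ $\boldsymbol{\theta}$, hence for all $\boldsymbol{\theta}$ by continuity, so $\bA=0$. Since all norms on a finite-dimensional space are equivalent, this norm is bounded below by $c(n,p)|\bA|$, which gives the lower bound.

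The only point needing care is that the constants be independent of $s$. This is exactly ensured by the explicit radial integrals: both behave like $(p(1-s))^{-1}$ times factors in $[2^{-p},2^{p}]$. No step is a real obstacle; the slight subtlety is choosing the inner ball $B(0,1/2)$ so that a full sphere of directions is available in the lower bound.
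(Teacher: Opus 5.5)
Your proof is correct and follows essentially the same route as the paper. The upper bound uses $|\mathbf{A}(\mathbf{x}-\mathbf{y})|\le|\mathbf{A}|\,|\mathbf{x}-\mathbf{y}|$ with $B\subset B(\mathbf{x},2)$, and the lower bound restricts to $\mathbf{x}\in B(0,1/2)$, $\mathbf{y}\in B(\mathbf{x},1/2)$ and passes to polar coordinates. The only difference is the last step: you compare $\int_{\mathbb{S}^{n-1}}|\mathbf{A}\omega|^p\,\mathrm{d}\sigma$ with $|\mathbf{A}|^p$ via equivalence of norms on $\mathbb{R}^{n\times n}$, whereas the paper uses the identity $\int_{\mathbb{S}^{n-1}}\omega\otimes\omega\,\mathrm{d}\sigma(\omega)=C(n)\mathbf{I}$ together with H\"older's inequality, which gives an explicit constant; both arguments are valid.
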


\begin{proof}
    To prove the upper bound in \eqref{eq:NormCompEquiv}, using $|\mathbf{A}\bx - \mathbf{A}\by| \leq |\mathbf{A}| |\bx-\by|$, we integrate:
    \begin{equation}
        \fint_{B} \int_{B} \frac{|\mathbf{A}\bx - \mathbf{A}\by|^p}{|\bx-\by|^{n+sp}} \, \mathrm{d}\by \, \mathrm{d}\bx \leq |\mathbf{A}|^p \fint_{B)} \int_{B(\bx,2)} \frac{1}{|\bx-\by|^{n-(1-s)p}} \, \mathrm{d}\by \, \mathrm{d}\bx \leq \frac{C(n,p)}{1-s} |\mathbf{A}|^p.
    \end{equation}
    To prove the lower bound in \eqref{eq:NormCompEquiv}, we restrict the domain of integration:
    \begin{equation}\label{eq:NormCompEquiv:Pf1}
        \begin{aligned}
            \fint_{B} \int_{B} \frac{|\mathbf{A}\bx - \mathbf{A}\by|^p}{|\bx-\by|^{n+sp}} \, \mathrm{d}\by \, \mathrm{d}\bx
            &\geq \fint_{B(0,1/2)} \int_{B(\bx,1/2)} \frac{|\mathbf{A}(\bx-\by)|^p}{|\bx-\by|^{n+sp}} \, \mathrm{d}\by \, \mathrm{d}\bx \\
            &= \frac{C(n)}{2^{(1-s)p}(1-s)p} \int_{\mathbb{S}^{n-1}} |\mathbf{A}\mathbf{\omega}|^p \, \mathrm{d}\sigma(\mathbf{\omega}).
        \end{aligned}
    \end{equation}
    Now, $\displaystyle\int_{\bbS^{n-1}} \omega \otimes \omega \, \rmd \sigma(\omega) = C(n) \bI$, so by H\"older's inequality
    \begin{equation*}
        |\bA|^p = C(n,p) \left| \int_{\bbS^{n-1}} (\bA \omega) \otimes \omega \, \rmd \sigma(\omega) \right|^p \leq C(n,p) \int_{\bbS^{n-1}} |\bA \omega|^p \, \rmd \sigma(\omega).
    \end{equation*}
    This inequality combined with \eqref{eq:NormCompEquiv:Pf1} completes the proof of the lower bound in \eqref{eq:NormCompEquiv}.
\end{proof}

The same result for the case $p = \infty$ follows easily from the equivalence of the matrix norms $|\bA|$ and $\sup_{|\bx| = 1} |\bA\bx|$:

\begin{lemma}
    For any matrix $\bA \in \bbR^{n \times n}$ and $s \in (0,1]$
    \begin{equation}\label{eq:NormCompEquiv:Infty}
        n^{-1/2} |\bA| \leq \sup_{\substack{\bx,\by \in B(0,1) \\ \bx \neq \by}} \frac{|\bA(\bx-\by)|}{|\bx-\by|^s} \leq 2 |\bA|.
    \end{equation}
\end{lemma}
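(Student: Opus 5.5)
The plan is to prove the two inequalities in \eqref{eq:NormCompEquiv:Infty} separately, with $|\bA|$ the Frobenius norm. Throughout we use the operator norm $\|\bA\|_{\mathrm{op}} := \sup_{|\bz| = 1} |\bA\bz|$ and the standard equivalence
\[
\|\bA\|_{\mathrm{op}} \leq |\bA| \leq \sqrt{n}\,\|\bA\|_{\mathrm{op}}.
\]
The second inequality holds because $|\bA|^2 = \sum_{k=1}^n |\bA\be_k|^2 \leq n\|\bA\|_{\mathrm{op}}^2$, where $\be_k$ are the standard basis vectors.

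\emph{Upper bound.} For $\bx \neq \by$ in $B(0,1)$ we have $0 < |\bx - \by| < 2$. Since $1-s \geq 0$, this gives $|\bx-\by|^{1-s} \leq 2^{1-s} \leq 2$. Therefore
\[
\frac{|\bA(\bx-\by)|}{|\bx-\by|^s} \leq \|\bA\|_{\mathrm{op}}\, |\bx-\by|^{1-s} \leq 2\|\bA\|_{\mathrm{op}} \leq 2|\bA|.
\]
Taking the supremum over $\bx \neq \by$ yields the right-hand inequality.

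\emph{Lower bound.} Fix a unit vector $\bz$ and choose $\by = 0$ and $\bx = t\bz$ with $t \in (0,1)$. Then the quotient equals $t^{1-s}|\bA\bz|$, so the supremum is at least $t^{1-s}|\bA\bz|$ for every such $t$. Letting $t \to 1^-$, the supremum is at least $|\bA\bz|$. (When $s = 1$ the factor $t^{1-s}$ is identically $1$, so no limit is needed.) Taking the supremum over unit vectors $\bz$ gives
\[
\sup_{\substack{\bx,\by \in B(0,1)\\ \bx \neq \by}} \frac{|\bA(\bx-\by)|}{|\bx-\by|^s} \geq \|\bA\|_{\mathrm{op}} \geq n^{-1/2}|\bA|.
\]

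The one point requiring care is that the ball is open, so the pairs used must stay strictly inside $B(0,1)$. The choice $\bx = t\bz$, $\by = 0$ with $t<1$ handles this, and the limit $t \to 1^-$ recovers the full operator norm. Apart from this, the argument only uses the elementary comparison between the Frobenius and operator norms.
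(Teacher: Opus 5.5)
Your proof is correct and follows the route the paper indicates: it rests on the equivalence $\|\bA\|_{\mathrm{op}} \leq |\bA| \leq \sqrt{n}\,\|\bA\|_{\mathrm{op}}$ between the Frobenius and operator norms. The upper bound then uses $|\bx-\by|^{1-s} \leq 2$, and the lower bound uses the pairs $\bx = t\mathbf{z}$, $\by = 0$ with $t \to 1^-$, which correctly respects the openness of the ball.
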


\subsection{Fractional Korn's first inequality on Balls: the case \texorpdfstring{$1 < p < \infty$}{1<p<infinity}}\label{subsec:p}

Throughout this subsection, we assume $1\leq p < \infty$ unless otherwise specified.
We first have several lemmas concerning the coercivity of functionals:

For $r>0$,  a measurable set $\Omega \subset \bbR^n$, $p \in [1,\infty)$ and vector field $\bu : \Omega \to \bbR^n$, define the functionals  
$$
\begin{aligned}
\cD_r(\bu;\Omega)&:=\int_\Omega \int_\Omega
\mathbf 1_{\{r/2<|\bx-\by|<r\}}|\bu(\bx)-\bu(\by)|^p\, \rmd \by\,\rmd \bx,\\
\cE_r(\bu;\Omega)&:=\int_\Omega \int_\Omega
\mathbf 1_{\{r/8<|\bx-\by|<4r\}}
\left|(\bu(\bx)-\bu(\by))\cdot\frac{\bx-\by}{|\bx-\by|}\right|^p\, \rmd \by\,\rmd \bx .
\end{aligned}
$$ 

The following lemma characterizes $\cR$ as a zero sent of $\cE_r(\bu;B)$. Characterization of this type via averaging of projected difference quotients are given in \cite{Mengesha2012, KassmannMengeshaScott2019}. See also \cite{Temam-Miran} for continuous vector fields. 

\begin{lemma}\label{lma:ZeroSet}
    For $1\leq p<\infty$, suppose that $\bu \in L^p(B;\bbR^n)$ satisfies $\cE_1(\bu;B) = 0$. Then $\bu \in \cR$. 
\end{lemma}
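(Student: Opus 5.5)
Since $\cE_1(\bu;B)=0$, the projected difference vanishes, i.e.
\begin{equation*}
(\bu(\bx)-\bu(\by))\cdot(\bx-\by)=0
\end{equation*}
for a.e.\ pair $(\bx,\by)\in B\times B$ with $1/8<|\bx-\by|<4$. Since $B$ has diameter $2<4$, this covers a.e.\ pair with $|\bx-\by|>1/8$. The plan is first to upgrade this to all pairs in $B\times B$, and then to identify the zero set by mollification and the classical characterization of $\cR$.

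\emph{Step 1: pairs at distance $\le 1/8$.} Fix $\bx,\by\in B$ with $|\bx-\by|\le 1/8$. I would pick an auxiliary point $\bz\in B$ with $|\bz-\bx|>1/8$ and $|\bz-\by|>1/8$; the set of such $\bz$ has positive measure, uniformly for $\bx,\by\in B$. Applying the identity to the pairs $(\bx,\bz)$ and $(\by,\bz)$ and expanding gives
\begin{equation*}
(\bu(\bx)-\bu(\by))\cdot(\bx-\by) = \bu(\bx)\cdot(\bx-\bz)+\bu(\bx)\cdot(\bz-\by)-\cdots .
\end{equation*}
It is cleaner to use the polarized form. Define $F(\bx,\by):=(\bu(\bx)-\bu(\by))\cdot(\bx-\by)$. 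Then for any four points $\bx,\by,\bz,\bw$,
\begin{equation*}
F(\bx,\bz)+F(\by,\bw)-F(\bx,\bw)-F(\by,\bz) = -(\bu(\bx)-\bu(\by))\cdot(\bz-\bw)-(\bu(\bz)-\bu(\bw))\cdot(\bx-\by).
\end{equation*}
Choose $\bz,\bw$ far from both $\bx$ and $\by$. The left side then vanishes for a.e.\ such configuration. Integrating this over a positive-measure set of $(\bz,\bw)$ shows, by Fubini, that $\bu(\bx)-\bu(\by)$ is determined affinely by $\bx-\by$ via averaged quantities. Concretely, fix a small ball $D$ of centers $\bz,\bw$ far from a region containing $\bx$ and $\by$. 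Averaging in $\bw$ and integrating in $\bz$ against test weights yields $\bu(\bx)=\bM\bx+\bc$ a.e.\ on that region, for a matrix $\bM$ and vector $\bc$ built from integrals of $\bu$ over $D$. This already gives local affineness.

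\emph{Step 2 (simpler alternative, which I would actually pursue).} Mollify. Note that $\cE_1(\bu;B)=0$ is preserved under translation-invariant averaging on a slightly smaller ball: for $\bu_\varepsilon=\bu*\eta_\varepsilon$ on $B_{1-\varepsilon}$,
\begin{equation*}
F_\varepsilon(\bx,\by)=\int \eta_\varepsilon(\bh)\,F(\bx-\bh,\by-\bh)\,\rmd\bh=0
\end{equation*}
whenever $1/8<|\bx-\by|$. Thus $\bu_\varepsilon$ is smooth and satisfies $F_\varepsilon=0$ for separated pairs. Differentiate $F_\varepsilon(\bx,\by)=0$ in $\bx$ twice and in $\by$ once (all allowed since the constraint is an open condition). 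From
\begin{equation*}
\partial_{x_k}F = \partial_k\bu(\bx)\cdot(\bx-\by)+u^k(\bx)-u^k(\by),
\end{equation*}
a further $\partial_{y_l}$ gives $-\partial_k u^l(\bx)-\partial_l u^k(\by)=0$. Therefore $e(\bu_\varepsilon)$ takes a constant value $S$ on separated points, and $S=-S$, i.e.\ $e(\bu_\varepsilon)(\bx)=0$ for every $\bx\in B_{1-\varepsilon}$. This holds because every $\bx$ has some $\by$ at distance greater than $1/8$ in the ball.

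\emph{Step 3: conclusion.} The classical fact (Temam--Miranville) that $e(\bv)=0$ on a connected open set forces $\bv\in\cR$ gives $\bu_\varepsilon\in\cR$ on $B_{1-\varepsilon}$. Since $\cR$ is finite dimensional and $\bu_\varepsilon\to\bu$ in $L^p_{loc}$, the limit $\bu$ lies in $\cR$ a.e.\ on $B$.

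The main obstacle is justifying the derivative computation in Step 2: it needs the vanishing to hold pointwise on an open set of pairs, which mollification provides, together with checking that the separation condition survives the shift by $\bh$. It does, since the shift moves both points by the same $\bh$.
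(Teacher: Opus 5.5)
Your argument (Steps 2--3; Step 1 is an unfinished sketch you rightly set aside) is correct, but in the core step it takes a different route from the paper.

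\textbf{How the two routes differ.} The paper treats a continuous $\bu$ by pure algebra. It subtracts the orthogonality relations at $\bx_0$ and at $\bx_0+\delta\be_i$ to show that each component $\bu\cdot\be_i$ is affine on $B\setminus\overline{B(\bx_0,1/4)}$. It then patches two such regions, with disjoint excluded balls, into a single affine map on $B$, and obtains skewness by taking $\by=0$. Only after that does it mollify.

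You mollify first and differentiate $F_\varepsilon=0$ once in $x_k$ and once in $y_l$. This gives $\partial_k u_\varepsilon^l(\bx)=-\partial_l u_\varepsilon^k(\by)$ on the open set of separated pairs. Your route is shorter and avoids the geometric patching. It needs $C^1$ regularity, which mollification supplies anyway. The paper's argument works for merely continuous fields without differentiating. The final limiting step is essentially the same in both: closedness of the finite-dimensional space $\cR$ restricted to $B_r$, followed by $r\uparrow 1$.

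\textbf{Two points to tighten.}
\begin{enumerate}
\item For fixed $(\bx,\by)$, the map $\bh\mapsto(\bx-\bh,\by-\bh)$ parametrizes a null subset of $B\times B$. So ``$F=0$ a.e.'' does not by itself make $\int\eta_\varepsilon(\bh)F(\bx-\bh,\by-\bh)\,\mathrm{d}\bh$ vanish for every pair. Instead, integrate $\int\eta_\varepsilon(\bh)|F(\bx-\bh,\by-\bh)|\,\mathrm{d}\bh$ over the open set of separated pairs in $B_{1-\varepsilon}\times B_{1-\varepsilon}$. This gives $F_\varepsilon=0$ a.e.\ there, and continuity of $F_\varepsilon$ then gives it everywhere. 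The paper glosses over this too.
\item Your justification ``every $\bx$ has a separated $\by$'' only yields $e(\bx)=-e(\by)$. Constancy also needs the fact that any two points of $B_{1-\varepsilon}$ have a common separated partner, which holds because two balls of radius $1/8$ cannot cover $B_{1-\varepsilon}$. Applied to $\nabla\bu_\varepsilon(\bx)=-(\nabla\bu_\varepsilon(\by))^{T}$, the same reasoning shows directly that $\nabla\bu_\varepsilon$ is a constant skew matrix. So you do not even need the Temam--Miranville characterization.
\end{enumerate}
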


\begin{proof}
The proof is similar to that of \cite[Lemma 2.8]{KassmannMengeshaScott2019}. 
Assume first that $\bu$ is continuous on $\overline{B}$. 
Since the integrand is non-negative and $|\bx-\by| < 2 < 4$ for all $\bx, \by \in B$, the condition $\cE_1(\bu;B) = 0$ implies that for every $(\bx, \by) \in B \times B$ with $|\bx-\by| > 1/8$,
\begin{equation}\label{eq:orthog}
    (\bu(\bx)-\bu(\by)) \cdot (\bx-\by) = 0, \qquad \forall \bx \in B \text{ and } \forall \by \in \Gamma(\bx):= \{ \by \in B : |\bx-\by| > 1/8 \}.
\end{equation}
Note that $\Gamma(\bx) = B \setminus \overline{B(\bx,1/8)}$ for all $\bx \in B$.
Now fix a parameter $\delta \in (0, 1/8)$ and let $\{\be_i\}_{i=1}^n$ denote the standard orthonormal basis of $\mathbb{R}^n$.
Fix $\bx_0 \in B(0,5/8)$.
Then $\overline{B(\bx_0,1/4)} \subset B$ and $\overline{B(\bx_0+\delta \be_i,1/8)} \subset B$ for each $i$.
Moreover, for any $i$ and for any $\bx \in \overline{B(\bx_0 + \delta \be_i,1/8)}$, $|\bx-\bx_0| \leq |\bx-\bx_0 - \delta \be_i| + \delta \leq 1/8 + \delta < 1/4$. Thus $\cup_{i=1}^n \overline{B(\bx_0 + \delta \be_i,1/8))} \subset \overline{B(\bx_0,1/4)}$, and so
\begin{equation*}
    B \setminus \overline{B(\bx_0,1/4)} \subset \wt{\Gamma}(\bx_0), \qquad \text{ where } \wt{\Gamma}(\bx_0) := \Gamma(\bx_0) \cap \bigcap_{i=1}^n \Gamma(\bx_0 + \delta \be_i).
\end{equation*}
For every $\bx \in \widetilde{\Gamma}(\bx_0)$, \eqref{eq:orthog} holds at both $\bx_0$ and $\bx_0 + \delta \be_i$:
\begin{align}
    (\bu(x) - \bu(\bx_0)) \cdot (\bx - \bx_0) &= 0, \label{eq:pt0} \\
    (\bu(\bx) - \bu(\bx_0 + \delta \be_i)) \cdot (\bx - \bx_0 - \delta \be_i) &= 0. \label{eq:pti}
\end{align}
Subtracting \eqref{eq:pt0} from \eqref{eq:pti} yields
\begin{equation}
    -\delta (\bu(x) - \bu(x_0)) \cdot \be_i + (\bu(x_0) - \bu(\bx_0 + \delta \be_i)) \cdot (\bx - \bx_0 - \delta \be_i) = 0.
\end{equation}
Rearranging terms, we obtain for each $i \in \{1, \dots, n\}$:
\begin{equation}\label{eq:coord-linear}
    \bu(\bx) \cdot \be_i = \frac{1}{\delta} \big( \bu(\bx_0 + \delta \be_i) - \bu(\bx_0) \big) \cdot (\bx - \bx_0) + \bu(\bx_0) \cdot \be_i.
\end{equation}
Because $\bu(\bx) \cdot \be_i$ are affine functions of $\bx$ on $\widetilde{\Gamma}(\bx_0)$ for each $i$, it follows that $\bu(\bx) = \bA \bx  + \bfb$ almost everywhere on $\widetilde{\Gamma}(\bx_0)$, hence on $B \setminus \overline{B(\bx_0,1/4)}$. 
To see that $\bu$ must be the same affine map on all of $B$, note that 
we can choose two points $\bx_0, \tilde{\bx}_0 \in B(0,5/8)$ such that $\overline{B(\bx_0,1/4)} \cap \overline{B(\tilde{\bx}_0,1/4)} = \emptyset$. Then it follows that $\wt{\Gamma}(\bx_0) \cap \wt{\Gamma}(\tilde{\bx}_0) \neq \emptyset$ and $\wt{\Gamma}(\bx_0) \cup \wt{\Gamma}(\tilde{\bx}_0) = B$. The nonempty intersection implies that $\bu$ must be the same affine map on both regions, hence on all of $B$. 
Finally, substituting $\bu(\bx) = \bA \bx + \bfb$ into \eqref{eq:orthog} and choosing $\by = 0$ gives
\begin{equation}
    (\bA \bx) \cdot \bx = \left( \frac{\bA+\bA^T}{2} \bx \right) \cdot \bx = 0, \qquad  \frac{1}{8} < |\bx| < 1.
\end{equation}
Therefore $\bA + \bA^T = 0$.

Now assume $\bu\in L^{p}(B; \mathbb{R}^n)$.
Let $\varphi$ be a standard radially symmetric $C^\infty$ mollifier with support in $B(0,1)$ and $\int_{B(0,1)} \varphi = 1$. For $\veps > 0$ sufficiently small, let $\varphi_\veps(\bx) = \veps^{-n} \varphi(\bx/\veps)$. Define $\bu_\veps = \varphi_\veps \ast \bu$, where the convolution is defined by extending $\bu$ outside of $B$ by $0$. Then by definition of $\bu_\veps$ and by the assumption \eqref{eq:orthog}
\begin{equation}\label{eq:orthog-lp}
    (\bu_\veps(\bx)-\bu_\veps(\by)) \cdot (\bx-\by) = 0 \qquad 
    \begin{gathered}
        \forall \bx \in B(0,1-\veps) \text{ and}  \\ \forall \by \in \Gamma_\veps(\bx) := \{ \by \in B(0,1-\veps) \, : \, |\bx-\by| > 1/8 \}.
    \end{gathered}
\end{equation}
We can now apply the previous argument to the smooth function $\bu_\veps$ (modifying choices of $\bx_0$ to account for the reduced radius $1-\veps$ and sets $\Gamma_\veps$) to conclude that $\bu_\veps$ must be an affine map on the set $B(0, 1-\veps)$.
Call this affine map $\bspsi_\veps$. 
Note that $\{\bspsi_\veps\}_\veps$ forms a bounded sequence in $\cR$; by \eqref{eq:AffineMapEquivalence:Rescaled},
\begin{equation*}
    \vnorm{\bspsi_\veps}_{\mathrm{aff}} \leq C \vnorm{\bspsi_\veps}_{L^p(B(0,1-\veps))} \leq C \vnorm{\bu_\veps}_{L^p(B)} \leq C \vnorm{\bu}_{L^p(B)},
\end{equation*}
where $C$ is independent of $\veps$.
Therefore since $\bspsi_\veps$ belongs to the finite-dimensional space $\cR$, there exists a subsequence (not relabeled) that converges in $\vnorm{\cdot}_{\mathrm{aff}}$ to an affine map $\bspsi$. Clearly $\bspsi \in \cR$ as well. Now, for a fixed $\veps_0$, we have for $\veps < \veps_0$ the identity $\bu_\veps = \bspsi_\veps$ on $B(0,1-\veps)$ and so
\begin{equation*}
    \begin{split}
        \vnorm{\bu - \bspsi}_{L^p(B(0,1-\veps_0))} &\leq \vnorm{\bu - \bu_\veps}_{L^p(B)} + \vnorm{\bspsi_\veps - \bspsi}_{L^p(B(0,1-\veps))} \\
        &\leq \vnorm{\bu - \bu_\veps}_{L^p(B)} + \vnorm{\bspsi_\veps - \bspsi}_{L^p(B)}.
    \end{split}
\end{equation*}
Since $\bu_\veps \to \bu$ in $L^p(B;\bbR^n)$ and since the $L^p(B;\bbR^n)$ norm is equivalent to the $\vnorm{\cdot}_{\mathrm{aff}}$ norm on $\cR$, we deduce that $\vnorm{\bu - \bspsi}_{L^p(B(0,1-\veps_0))} = 0$. The dominated convergence theorem as $\veps_0 \to 0$ implies that $\bu = \bspsi$ almost everywhere on $B$. 
\end{proof}

The next lemma shows the coercivity of an $L^p$ functional of the projected difference quotient, with no smoothness assumed on $\bu$.

\begin{lemma}\label{lma:CoercivityUnscaled}
For $1\leq p<\infty$, there exists $C_2 = C_2(p, n) > 0$ such that the following holds: for all $\bu \in L^p(B;\bbR^n)$
    \begin{equation*}
        \vnorm{ \bu - \Pi_B[\bu]}_{L^p(B)}^p \leq C_2 \cE_1(\bu;B).
    \end{equation*}
    As a consequence, for any $r > 0$ and $\bx_0 \in \bbR^n$,and $\bu \in L^p(B(\bx_0,r),\bbR^n)$, we obtain 
\begin{equation}\label{eq:CoercivityScaled:1}
     \|\bu-\Pi_{B(\bx_0,r)}[\bu]\|_{L^{p}(B(\bx_0,r))}^p  \leq C_2\, r^{-n} \cE_r(\bu;B(\bx_0,r)).
\end{equation}
\end{lemma}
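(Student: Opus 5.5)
We argue by contradiction and compactness, using Lemma \ref{lma:ZeroSet} to identify the limit. Suppose the inequality fails. Then there are $\bu_k \in L^p(B;\bbR^n)$ with
\[
\vnorm{\bu_k - \Pi_B[\bu_k]}_{L^p(B)} = 1 \quad\text{and}\quad \cE_1(\bu_k;B) \to 0.
\]
Set $\bv_k := \bu_k - \Pi_B[\bu_k]$. Since $\Pi_B$ is a projection onto $\cR$ (Lemma \ref{lem:ProjectionBall}), we have $\Pi_B[\bv_k] = 0$. Moreover $\cE_1(\bv_k;B) = \cE_1(\bu_k;B)$, because $\cE_1$ vanishes on $\cR$: for a rigid motion, $(\bA(\bx-\by))\cdot(\bx-\by) = 0$ when $\bA$ is skew. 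We therefore have a sequence that is normalized in $L^p$, lies in the kernel of $\Pi_B$, and has vanishing energy.

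\textbf{Main obstacle: strong $L^p$ compactness.} Nothing is assumed about smoothness, and $\cE_1$ only sees separations $1/8<|\bx-\by|<4$, so there is no small-scale regularity. I would first try the Fredholm viewpoint mentioned in the introduction. Write
\[
\cE_1(\bu;B) = \int_B \int_B k(\bx-\by)\, \bigl|(\bu(\bx)-\bu(\by))\cdot \bn\bigr|^p \,\rmd \by\,\rmd \bx, \qquad \bn = \tfrac{\bx-\by}{|\bx-\by|},
\]
with a bounded kernel $k$. In the case $p=2$, expanding the square gives
\[
\cE_1(\bu;B) = \langle (\mathbf{M} - \mathbf{K})\bu, \bu\rangle .
\]
Here $\mathbf{M}$ is the multiplication operator by $\mathbf{M}(\bx) = \int_B k(\bx-\by)\,\bn\otimes\bn\,\rmd\by$. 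This matrix is uniformly positive definite on $B$, since for every $\bx\in B$ the set of admissible $\by$ contains a cone of directions of positive measure in every direction. The operator $\mathbf{K}$ is an integral operator with bounded kernel, hence compact on $L^p$.

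For general $p$ I would avoid the quadratic structure. Using $|a-b|^p \ge 2^{1-p}|a|^p - |b|^p$, one gets a lower bound on $\cE_1(\bv_k)$ in terms of $\int \mathbf{M}$-weighted $|\bv_k(\bx)\cdot\bn|^p$ minus terms involving $\bv_k(\by)$ integrated against a bounded kernel. Extracting a weak limit $\bv_k \rightharpoonup \bv$ in $L^p$ (for $p=1$, first use equi-integrability or pass through weak-$*$ measures), the compact parts converge strongly. The coercive multiplication part then upgrades weak convergence to strong convergence: $\limsup$ of $\int |\bv_k-\bv|^p$ is controlled by $\cE_1(\bv_k-\bv)$ plus compact terms that tend to zero.

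If this direct route proves messy, the alternative is to mollify $\bv_k$ at scale $1/16$. The projected differences at scales in $(1/8,4)$ control $\bv_k - \varphi\ast\bv_k$ via the chaining identity used in the proof of Lemma \ref{lma:ZeroSet}: $\bu(\bx)\cdot\be_i$ is expressed through differences of $\bu$ at points separated by more than $1/8$. This gives strong compactness, since the mollified sequence is equicontinuous.

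\textbf{Identifying the limit.} Either way, we obtain $\bv_k \to \bv$ strongly in $L^p(B)$. By Fatou's lemma $\cE_1(\bv;B) = 0$, so $\bv\in\cR$ by Lemma \ref{lma:ZeroSet}. By continuity of $\Pi_B$ on $L^1$, $\Pi_B[\bv] = \lim_k \Pi_B[\bv_k] = 0$, while also $\Pi_B[\bv] = \bv$; hence $\bv = 0$. This contradicts $\vnorm{\bv}_{L^p(B)} = 1$, which proves the first inequality.

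\textbf{Scaled version \eqref{eq:CoercivityScaled:1}.} Apply the first inequality to $\bv(\bz) = \bu(\bx_0 + r\bz)$, using the definition $\Pi_{B(\bx_0,r)}[\bu](\bx) = \Pi_B[\bv]\bigl(\tfrac{\bx-\bx_0}{r}\bigr)$. Under the change of variables, the left side scales by $r^{n}$, and the double integral $\cE_r(\bu;B(\bx_0,r))$ equals $r^{2n}\cE_1(\bv;B)$. The projected difference itself is scale invariant, since $\bn$ is unchanged. Combining these factors gives exactly the factor $r^{-n}$ in \eqref{eq:CoercivityScaled:1}.
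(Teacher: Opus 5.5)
\textbf{There is a genuine gap in the compactness step for $p\neq 2$.} The other parts of your proof are fine and match the paper: the reduction to $\bv_k$ with $\Pi_B[\bv_k]=0$, $\vnorm{\bv_k}_{L^p(B)}=1$ and $\cE_1(\bv_k;B)\to 0$; the identification of a strong limit via Fatou, Lemma~\ref{lma:ZeroSet} and continuity of $\Pi_B$; and the rescaling to \eqref{eq:CoercivityScaled:1}. For $p=2$ your quadratic-form argument also closes. The problem is the step you flag as the main obstacle, and neither route you sketch establishes it.

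Write $\chi(\bx-\by):=\mathds{1}_{\{1/8\le|\bx-\by|\le 4\}}$. The inequality $|a-b|^p\ge 2^{1-p}|a|^p-|b|^p$ gives nothing here. Both $\chi$ and $\bigl|\,\cdot\,\cdot\frac{\bx-\by}{|\bx-\by|}\bigr|$ are symmetric under $\bx\leftrightarrow\by$. So the subtracted term $\iint\chi\,\bigl|\bv(\by)\cdot\frac{\bx-\by}{|\bx-\by|}\bigr|^p$ equals $\iint\chi\,\bigl|\bv(\bx)\cdot\frac{\bx-\by}{|\bx-\by|}\bigr|^p$, and your lower bound becomes $(2^{1-p}-1)\iint\chi\,\bigl|\bv(\bx)\cdot\frac{\bx-\by}{|\bx-\by|}\bigr|^p\le 0$. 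That trick works in Lemma~\ref{lma:CoercivityScaled} only because there the $\bv(\by)$-term lives on a different ball, where it is already controlled by the present lemma.

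Your claim that $\limsup\int|\bv_k-\bv|^p$ is bounded by $\cE_1(\bv_k-\bv)$ plus compact terms is exactly what must be proved. Without quadratic structure you give no mechanism for it. At $p=1$ there is a further problem: bounded sequences need not have weakly convergent subsequences, and a weak-$*$ limit may be a measure, to which Fatou and Lemma~\ref{lma:ZeroSet} do not apply.

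The mollification fallback is misdescribed. The difference $\bv_k-\varphi*\bv_k$ at scale $1/16$ is governed by differences at scales that $\cE_1$ never sees. What would work is averaging the chaining identity from Lemma~\ref{lma:ZeroSet} over base points $\bx_0$ in a small ball. On each of two regions covering $B$, this writes $\bv_k$ as an affine map (coefficients bounded by $\vnorm{\bv_k}_{L^1(B)}$) plus an error of $L^p$-norm $\lesssim\cE_1(\bv_k;B)^{1/p}$, which yields compactness. But that is not the argument you wrote.

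\textbf{The missing idea:} use your $p=2$ operators \emph{linearly} rather than through the quadratic form. The key identity is $\big((\bx-\by)\otimes(\bx-\by)\big)(\bv(\bx)-\bv(\by))=(\bx-\by)\big[(\bv(\bx)-\bv(\by))\cdot(\bx-\by)\big]$. Set $\mathbf{M}(\bx)=\int_B\chi(\bx-\by)\,(\bx-\by)\otimes(\bx-\by)\,\rmd\by$, which is uniformly positive definite by Lemma~\ref{lma:PositiveFunctional} with $p=2$. Define $\mathbb{K}\bv(\bx)=\mathbf{M}(\bx)^{-1}\int_B\chi(\bx-\by)\,(\bx-\by)\otimes(\bx-\by)\,\bv(\by)\,\rmd\by$. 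Then pointwise
\[
\bv(\bx)-\mathbb{K}\bv(\bx)=\mathbf{M}(\bx)^{-1}\int_B\chi(\bx-\by)\,(\bx-\by)\big[(\bv(\bx)-\bv(\by))\cdot(\bx-\by)\big]\,\rmd\by ,
\]
and Jensen gives $\vnorm{\bv-\mathbb{K}\bv}_{L^p(B)}^p\le C\,\cE_1(\bv;B)$ for every $p\in[1,\infty)$.

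The operator $\mathbb{K}$ is an $L^\infty$ matrix times a convolution-type operator, so it is compact on $L^p(B)$ for all $1\le p<\infty$ by Kolmogorov--Riesz. Hence $\mathbb{K}\bv_k\to\bw$ strongly along a subsequence, and
\[
\vnorm{\bv_k-\bw}_{L^p}\le\vnorm{\bv_k-\mathbb{K}\bv_k}_{L^p}+\vnorm{\mathbb{K}\bv_k-\bw}_{L^p}\to 0 .
\]
This gives strong convergence directly, with no weak limits and no special treatment of $p=1$. It is exactly the paper's proof.
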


\begin{proof}
    Suppose to the contrary; then there exists $\{\bu_k\} \subset L^p(B;\bbR^n)$ such that for each $k \in \bbN$
    \begin{equation*}
        \vnorm{\bu_k}_{L^p(B)} = 1, \qquad \Pi_B[\bu_k] = 0, \qquad \text{ with } \cE_1(\bu_k;B) \to 0 \text{ as } k \to \infty.
    \end{equation*}
    We will show that $\{\bu_k \}$ converges in $L^p(B;\bbR^n)$ and derive a contradiction.
    
   For $x\in B$, we define the symmetric matrix 
    \begin{equation*}
        \bM(\bx) = \int_B \bb1_{ \{ \frac{1}{8} \leq |\bx-\by| \leq 4\} } (\bx-\by) \otimes (\bx-\by) \, \rmd \by.
    \end{equation*}
    It then follows from Lemma~\ref{lma:PositiveFunctional} in the case $p = 2$ that there exists a constant $c > 0$, independent of $\bx$, such that
\begin{equation*}
    (\bM(\bx)\bsxi) \cdot \bsxi \geq \frac{1}{8} \Phi \left( \frac{\bsxi}{|\bsxi|}, \bx \right) |\bsxi|^2 \geq c |\bsxi|^2 \quad \text{for all } \bsxi \in \bbR^n \setminus \{\mathbf{0}\},
\end{equation*}
where $\Phi$ is the auxiliary functional defined in Lemma~\ref{lma:PositiveFunctional}. 
    Thus, $\bM$ is positive definite and so the inverse matrix $\bM^{-1} : B \to \bbR^{n \times n}$ exists with $\bM^{-1} \in L^\infty(B)$.
We now define the integral operator
    \begin{equation*}
        \bbK\bv(\bx) = \bM(\bx)^{-1} \int_B \bb1_{ \{ \frac{1}{8} \leq |\bx-\by| \leq 4 \} } \big( (\bx-\by) \otimes (\bx-\by) \big) \bv(\by) \, \rmd \by.
    \end{equation*}
    The operator $\bbK$ is a product of an $L^{\infty}$ function with a convolution operator.
    Thus, an argument similar to that of the Riesz-Frechet-Kolmogorov theorem \cite{Brezis2011} (see also the proof of \cite[Lemma 5.4]{Mengesha2012}) shows that $\bbK : L^p(B;\bbR^n) \to L^p(B;\bbR^n)$ is a compact operator. Moreover, by Minkowski's convolution inequality and the bound on $\bM^{-1}$
    \begin{equation*}
        \vnorm{\bv - \bbK \bv}_{L^p(B)}^p \leq C(n,p) \cE_1(\bv;B), \qquad \forall \bv \in L^p(B;\bbR^n).
    \end{equation*}
    Now, since $\vnorm{\bu_k}_{L^p(B)}$ is a bounded sequence, a subsequence (not relabeled) of $(\bbK \bu_k)$ converges strongly in $L^p(B;\bbR^n)$, say to a function $\bw$. Then
    \begin{equation*}
        \begin{split}
            \vnorm{\bu_k - \bw}_{L^p(B)}^p &\leq C(n,p) \vnorm{\bu_k - \bbK \bu_k}_{L^p(B)}^p + C(n,p) \vnorm{\bbK \bu_k - \bw }_{L^p(B)}^p \\
            &\leq C(n,p) \cE_1(\bu_k;B) + C(n,p) \vnorm{ \bbK \bu_k - \bw }_{L^p(B)}^p \\
            &\to 0 \text{ as } k \to \infty.
        \end{split}
    \end{equation*}
    Thus, $\bu_k \to \bw$ strongly in $L^p(B;\bbR^n)$. There exists a subsequence (again not relabeled) that converges to $\bw$ almost everywhere, and so by Fatou's lemma
    \begin{equation*}
        \cE_1(\bw;B) \leq \liminf_{k \to \infty} \cE_1(\bu_k;B) = 0.
    \end{equation*}
    Therefore, $\bw \in \cR$ by Lemma \ref{lma:ZeroSet}.  
    However, $\Pi_B$ is continuous on $L^p(B;\bbR^n)$ and fixes the elements of $\cR$, so
    \begin{equation*}
        \bw = \Pi_B[\bw] = \lim\limits_{k \to \infty} \Pi_B[\bu_k] = 0.
    \end{equation*}
    Thus, $\bw = 0$ almost everywhere. But $\vnorm{\bw}_{L^p(B)} = \lim\limits_{k \to \infty} \vnorm{\bu_k}_{L^p(B)} = 1$, a contradiction.

    Finally, \eqref{eq:CoercivityScaled:1} follows by applying the first part 
    to the function $\bu(\bx_0 + r \bx)$ and rescaling.  
\end{proof}

Without a projection operator that is continuous on $L^p$, the above proof would not be possible. Projections that appear in Korn's inequalities typically appear via minimization of the seminorms, and so are bounded only on Sobolev spaces. Fixing the projection explicitly will be used to our advantage many times in the subsequent arguments.

We will apply the scaled estimate \eqref{eq:CoercivityScaled:1} to an open cover of $B(0,1)$. The estimate needs to be adapted to treat sets that intersect the boundary of $B(0,1)$. This argument is similar to that of \cite{Ponce2004} used to establish a nonlocal Poincar\'e inequality.

\begin{lemma}\label{lma:CoercivityScaled}
    Let $\bx_0 \in \overline{B}$ and let $0 < r < 1$, so that $B((1-r)\bx_0,r) \subset B \cap B(\bx_0,2r)$. For $1\leq p<\infty)$, there exists $C=  C(n, p)>0$ such that for all $\bu \in L^p(B;\bbR^n)$
    \begin{equation}\label{eq:CoercivityScaled:Pf2}
    \vnorm{\bu - \Pi_{B((1-r)\bx_0,r)}[\bu] }^p_{L^p(B \cap B(\bx_0,2r))} \leq C r^{-n} \cE_r(\bu;B \cap B(\bx_0,2r)).
\end{equation}
\end{lemma}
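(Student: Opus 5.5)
The plan is to prove \eqref{eq:CoercivityScaled:Pf2} by rescaling to unit size and running the same contradiction/compactness argument as in Lemma~\ref{lma:CoercivityUnscaled}, now on the fixed family of sets $D_{\bz} := B((1-r)\bx_0/r... )$. Concretely, set $\bz_0 = \bx_0/r$ and change variables $\bx = r\bw$. The set $B\cap B(\bx_0,2r)$ becomes $D := B(0,1/r)\cap B(\bz_0,2)$, which contains the unit ball $B^\ast := B((1-r)\bz_0,1)$. Under this scaling $\cE_r$ on the original set corresponds to $r^{2n}\cE_1$ on $D$, and $\Pi_{B((1-r)\bx_0,r)}$ corresponds to $\Pi_{B^\ast}$. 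It therefore suffices to show
\[
\vnorm{\bv-\Pi_{B^\ast}[\bv]}_{L^p(D)}^p \le C\,\cE_1(\bv;D)
\]
with $C$ independent of $r\in(0,1)$ and $\bx_0\in\overline B$.

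The difficulty is that $D$ varies with $(r,\bx_0)$. I would handle this with a transport argument rather than compactness over domains. First, $D$ is the intersection of two balls, of radii $2$ and $1/r>1$, and it contains the unit ball $B^\ast$; moreover every point of $D$ lies within distance $2$ of $B^\ast$'s center region. Second, Lemma~\ref{lma:CoercivityUnscaled} (after translation) gives
\[
\vnorm{\bv-\Pi_{B^\ast}[\bv]}^p_{L^p(B^\ast)}\le C\,\cE_1(\bv;B^\ast)\le C\,\cE_1(\bv;D).
\]
Third, the bound has to be extended from $B^\ast$ to the rest of $D$. I would mimic Ponce's argument. Cover $D\setminus B^\ast$ by a bounded number $N(n)$ of balls $B_j$ of radius $1/4$ (say) such that each $B_j\cap D$ is "uniformly fat": it contains a ball of radius $c(n)$, since $D$ is convex with inradius at least $1$. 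Then chain each of them to $B^\ast$ through intermediate small balls with overlaps of comparable measure.

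On each pair $(\bx,\by)$ with $\bx$ in one piece and $\by$ in a region at distance in $(1/8,4)$, the projected differences are controlled by $\cE_1$. To compare rigid motions on overlapping pieces I would apply Lemma~\ref{lma:CoercivityUnscaled} to each small ball $B'\subset D$ of radius $\rho\ge c(n)$, rescaled. This gives $\vnorm{\bv-\Pi_{B'}\bv}_{L^p(B')}^p\le C\,\cE_{\rho}(\bv;B')\le C\,\cE_1(\bv;D)$, because the annulus $\{\rho/8<|\cdot|<4\rho\}$ is contained in a bounded number of annuli of the form $\{1/8<|\cdot|<4\}$ up to a constant. Strictly speaking $\cE_\rho$ and $\cE_1$ use different annuli, so I would instead choose the chain balls with radius exactly $1$, contained in $D$; this is possible because $D$ is a convex set containing a unit ball and of diameter at most $4$. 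Differences of rigid motions on overlaps of measure $\ge c(n)$ are then bounded via \eqref{eq:AffineMapEquivalence:Rescaled} by the $L^p$ norms on those overlaps, which sum to $C\,\cE_1(\bv;D)$.

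The main obstacle is the geometric step: showing that $D$ is covered by a bounded number of unit balls contained in $D$ (or sets where Lemma~\ref{lma:CoercivityUnscaled} applies with uniform constant), connected by chains with uniformly large overlaps, all uniformly in $r$ and $\bx_0$. Unit balls inside $D$ need not cover points near the "corner" where the two spheres meet. If that fails, I would first try the fallback of proving the estimate on the whole of $D$ directly by the compactness argument of Lemma~\ref{lma:CoercivityUnscaled}. In that approach the domains $D$ are parametrized by a compact parameter set, with the limits $r\to0$ giving half-space–ball intersections. One argues by contradiction with sequences $(r_k,\bx_{0,k},\bv_k)$, using that the operator $\bbK$ and $\bM^{-1}$ are uniformly controlled (by Lemma~\ref{lma:PositiveFunctional}) on convex sets containing a unit ball, and closes the argument with Lemma~\ref{lma:ZeroSet} on the limiting domain.
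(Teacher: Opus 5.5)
\textbf{There is a gap in your extension step, and the missing idea is the pointwise averaging you mention but never carry out.}

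Your rescaling to $D=B(0,1/r)\cap B(\bz_0,2)\supset B^\ast=B((1-r)\bz_0,1)$ is correct. So is the reduction to $\vnorm{\bv-\Pi_{B^\ast}[\bv]}^p_{L^p(D)}\le C\,\cE_1(\bv;D)$, and so is the bound on $B^\ast$ from Lemma~\ref{lma:CoercivityUnscaled}. The gap is in passing from $B^\ast$ to all of $D$, where your primary route fails for two reasons.

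\begin{itemize}
\item Balls of radius $\rho<1$ produce $\cE_\rho$. This sees pairs at distance in $(\rho/8,1/8]$, which $\cE_1(\bv;D)$ does not control, so the claimed containment of annuli is false.
\item Unit balls contained in $D$ cannot reach points near the edge where $\partial B(0,1/r)$ and $\partial B(\bz_0,2)$ meet, since $D$ is locally a wedge of opening $<\pi$ there.
\end{itemize}

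Your fallback, a contradiction argument over the parameters $(r,\bx_0)$, can probably be completed, but as written it is only a sketch. It needs a Riesz--Fr\'echet--Kolmogorov argument that is uniform over varying domains. It also needs a zero-set statement on the limit domain $D_\infty$. Lemma~\ref{lma:ZeroSet} is proved only on a ball, so you would have to pass from $B(0,1)\subset D_\infty$ to all of $D_\infty$ separately.

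The missing idea is the ``Ponce'' averaging you allude to; used quantitatively, it makes both the covering and the second compactness argument unnecessary. Put $\bw=\bv-\Pi_{B^\ast}[\bv]$. Fix $\bx\in D$, write $\omega=(\bx-\by)/|\bx-\by|$, and use
\[
|\bw(\bx)\cdot\omega|^p\le 2^p\bigl(|(\bw(\bx)-\bw(\by))\cdot\omega|^p+|\bw(\by)|^p\bigr).
\]
Integrate this over $\by\in B^\ast$ with $|\bx-\by|\ge 1/8$.

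\begin{itemize}
\item The left side equals $|\bw(\bx)|^p\,\Phi\bigl(\bw(\bx)/|\bw(\bx)|,\ \bx-(1-r)\bz_0\bigr)$.
\item Since $|\bx-(1-r)\bz_0|<2+|\bx_0|\le 3$, Lemma~\ref{lma:PositiveFunctional} bounds it below by $C_1|\bw(\bx)|^p$.
\item This bound is uniform in $\bx$, $r$ and $\bx_0$, including near the edge. The point is that $\Phi$ only uses the fixed ball $B^\ast$, not a ball around $\bx$.
\end{itemize}

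Now integrate over $\bx\in D$. Use $|\bx-\by|<4$ (both points lie in $B(\bz_0,2)$), the fact that rigid motions have zero projected differences (so $\cE_1(\bw;D)=\cE_1(\bv;D)$), and $|D|\le|B(0,2)|$. This gives $\vnorm{\bw}^p_{L^p(D)}\le C\,\cE_1(\bv;D)+C\vnorm{\bw}^p_{L^p(B^\ast)}$. The last term is controlled by Lemma~\ref{lma:CoercivityUnscaled}.

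This is exactly the paper's proof, carried out there in unscaled variables. Your fallback would in any case need the same fact to identify its limit: that $\{\bx-\by:\by\in B^\ast,\ |\bx-\by|>1/8\}$ spans $\mathbb{R}^n$. The direct quantitative use of it is strictly simpler.
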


\begin{proof}
    Write $\bv = \bu - \Pi_{B((1-r)\bx_0,r)}[\bu]$. Since
\begin{equation*}
    \left| \bv(\bx) \cdot \frac{\bx-\by}{|\bx-\by|} \right|^p \leq 2^p \left( \left| (\bv(\bx) - \bv(\by)) \cdot \frac{\bx-\by}{|\bx-\by|} \right|^p + \left| \bv(\by) \cdot \frac{\bx-\by}{|\bx-\by|} \right|^p \right),
\end{equation*}
integrating over $\by \in B((1-r)\bx_0,r) \cap \{ |\bx - \by| \geq r/8 \}$ gives
\begin{equation*}
    \begin{split}
        &\int_{B((1-r)\bx_0,r)} \bb1_{ \{ |\bx-\by| \geq r/8 \} } \left| \bv(\bx) \cdot \frac{\bx-\by}{|\bx-\by|} \right|^p \, \rmd \by \\
        \leq& C \int_{B((1-r)\bx_0,r)} \bb1_{ \{ |\bx-\by| \geq r/8 \} }  \left| (\bv(\bx) - \bv(\by)) \cdot \frac{\bx-\by}{|\bx-\by|} \right|^p \, \rmd \by \\
        &+ C \int_{B((1-r)\bx_0,r)} \bb1_{ \{ |\bx-\by| \geq r/8 \} } \left| \bv(\by) \cdot \frac{\bx-\by}{|\bx-\by|} \right|^p \, \rmd \by.
    \end{split}
\end{equation*}
After rescaling the integral, the left-hand side is seen to be equal to
\begin{equation*}
    r^n |\bv(\bx)|^p \Phi \left( \frac{\bv(\bx)}{|\bv(\bx)|}, \frac{\bx - (1-r) \bx_0}{r} \right), 
\end{equation*}
where the function $\Phi$ is as defined in Lemma \ref{lma:PositiveFunctional}.
Note that $\frac{\bx - (1-r) \bx_0}{r} \in \overline{B(0,3)}$ whenever $\bx_0 \in \overline{B}$ and $\bx \in B(\bx_0,2r)$. Thus, Lemma \ref{lma:PositiveFunctional} implies that there is a constant $C_1$ independent of $r$, $\bx_0$, and $\bu$ such that
\begin{equation*}
    \begin{split}
    C_1 r^n |\bv(\bx)|^p \leq  C \int_{B((1-r)\bx_0,r)} \bb1_{ \{ |\bx-\by| \geq r/8 \} }  \left| (\bv(\bx) - \bv(\by)) \cdot \frac{\bx-\by}{|\bx-\by|} \right|^p \rmd \by +  C \int_{B((1-r)\bx_0,r)} \left| \bv(\by) \right|^p \rmd \by.
    \end{split}
\end{equation*}
Integrating over $\bx \in B \cap B(\bx_0,2r)$ and using that $|\bx-\by| \leq 4r$ on the domain of integration, we obtain 
\begin{equation*}
    \int_{B \cap B(\bx_0,2r)} |\bv(\bx)|^p \, \rmd \bx \leq Cr^{-n} \cE_r(\bv;B \cap B(\bx_0,2r)) + C \frac{|B \cap B(\bx_0,2r)|}{r^n} \int_{B((1-r)\bx_0,r)} \left| \bv(\by) \right|^p \, \rmd \by.
\end{equation*}
Finally, we recall that $\bv = \bu - \Pi_{B((1-r)\bx_0,r)}[\bu]$ and use \eqref{eq:CoercivityScaled:1} on the last integral to get the result.
\end{proof}

Building on the previous coercivity estimates, the following lemma compares the nonlocal functionals $\cD_r(\bu; B)$ and $\cE_r(\bu; B)$.

\begin{lemma}\label{lma:CoercivityScaled:Korn1st}For $1<p < \infty$
    There exists $C = C(n,p) > 0$ such that the following holds: for all $\bu \in L^p(B;\bbR^n)$ and for each $0 < r < 1$ there exists $\bspsi \in \cR$ such that 
    \begin{equation}\label{eq:CoercivityScaled:Korn1st}
        \cD_r(\bu- \bspsi;B) + r^{n+p} \vnorm{\bu - \bspsi}_{L^p(B)}^p \leq C(n,p) \cE_r(\bu;B)
    \end{equation}
    
\end{lemma}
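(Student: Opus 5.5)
The plan is to build a smooth, piecewise-rigid interpolant of $\bu$ at scale $r$ and apply the classical Korn inequality on $B$ to it. The classical inequality \eqref{eq:Intro-Korn1stCase2:Proj} is exactly where $1<p<\infty$ is used. The local pieces come from Lemma~\ref{lma:CoercivityScaled}, and the interpolant converts "local rigid motions that nearly agree on overlaps" into a single global rigid motion with no loss in $r$. This matters because a discrete Poincar\'e chaining argument on the rigid motions would lose a factor $r^{-p}$ in the $\cD_r$ term.

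\emph{Step 1 (local rigid motions).} Fix $0<r<1$ and let $\{\bx_k\}\subset\overline B$ be a maximal $r/4$-separated set. Then the sets $U_k:=B\cap B(\bx_k,r/2)$ cover $B$, and the enlarged sets $B\cap B(\bx_k,2r)$ have overlap multiplicity bounded by $N(n)$. Set $\bspsi_k:=\Pi_{B((1-r)\bx_k,r)}[\bu]=\bA_k\bx+\bfb_k\in\cR$. Lemma~\ref{lma:CoercivityScaled} gives
\[
\vnorm{\bu-\bspsi_k}^p_{L^p(B\cap B(\bx_k,2r))}\le C r^{-n}\cE_r(\bu;B\cap B(\bx_k,2r)).
\]
If $B(\bx_k,r/2)\cap B(\bx_j,r/2)\neq\emptyset$, then $B((1-r)\bx_k,r)$ lies in both enlarged sets. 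The triangle inequality then bounds $\vnorm{\bspsi_k-\bspsi_j}^p_{L^p(B((1-r)\bx_k,r))}$ by $Cr^{-n}$ times the sum of the two local energies. Summing over $k$ with bounded overlap gives $\sum_k\vnorm{\bu-\bspsi_k}^p_{L^p(B\cap B(\bx_k,2r))}\le Cr^{-n}\cE_r(\bu;B)$, and similarly for the neighbor mismatches. Since everything is an $L^p$ norm of an affine map on a ball of radius $r$, I can pass between balls of comparable size via \eqref{eq:AffineMapEquivalence:Rescaled}.

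\emph{Step 2 (interpolant and classical Korn).} Take a smooth partition of unity $\{\varphi_k\}$ on $B$ subordinate to $\{B(\bx_k,r/2)\}$ with $|\nabla\varphi_k|\le C/r$, and define $\bw:=\sum_k\varphi_k\bspsi_k$. On $U_j$ we have $\sum_k\nabla\varphi_k=0$ and $e(\bspsi_k)=0$. Hence
\[
e(\bw)=\sum_k \big(\nabla\varphi_k\otimes(\bspsi_k-\bspsi_j)\big)^{\mathrm{sym}},
\]
so $\vnorm{e(\bw)}_{L^p(B)}^p\le Cr^{-p}\sum_{k\sim j}\vnorm{\bspsi_k-\bspsi_j}^p_{L^p}\le Cr^{-n-p}\cE_r(\bu;B)$. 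Also $\vnorm{\bu-\bw}_{L^p(B)}^p\le C\sum_k\vnorm{\bu-\bspsi_k}^p_{L^p(U_k)}\le Cr^{-n}\cE_r(\bu;B)$. Let $\bspsi:=\Pi_B[\bw]$. The classical Korn inequality \eqref{eq:Intro-Korn1stCase2:Proj} on $B$ then yields $\vnorm{\bw-\bspsi}_{W^{1,p}(B)}^p\le Cr^{-n-p}\cE_r(\bu;B)$.

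\emph{Step 3 (conclusion).} For the $L^p$ term,
\[
r^{n+p}\vnorm{\bu-\bspsi}^p_{L^p(B)}\le C r^{n+p}\big(\vnorm{\bu-\bw}^p_{L^p(B)}+\vnorm{\bw-\bspsi}^p_{L^p(B)}\big)\le C\cE_r(\bu;B),
\]
using $r<1$. For $\cD_r$, I split $\bu-\bspsi=(\bu-\bw)+(\bw-\bspsi)$. First, $\cD_r(\bu-\bw;B)\le 2^{p}\cdot 2|B(0,r)|\,\vnorm{\bu-\bw}^p_{L^p(B)}\le C\cE_r(\bu;B)$. Second, since $B$ is convex, the difference-quotient bound gives $\cD_r(\bw-\bspsi;B)\le Cr^{n+p}\vnorm{\nabla(\bw-\bspsi)}^p_{L^p(B)}\le C\cE_r(\bu;B)$. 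Adding the two pieces gives \eqref{eq:CoercivityScaled:Korn1st}.

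The main obstacle is the bookkeeping in Steps 1--2. The boundary balls $B((1-r)\bx_k,r)$ must be shown to give comparable affine norms on overlaps. The multiplicity bounds must also be checked uniformly in $r$, so that the mismatch estimate $\sum_{k\sim j}\vnorm{\bspsi_k-\bspsi_j}^p\lesssim r^{-n}\cE_r(\bu;B)$ holds with constants independent of $r$.
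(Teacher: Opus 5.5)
Your proposal follows the paper's proof essentially step for step: the local rigid motions $\Pi_{B((1-r)\bx_k,r)}[\bu]$ from Lemma~\ref{lma:CoercivityScaled}, the partition-of-unity interpolant $\sum_k\varphi_k\bspsi_k$, the classical Korn inequality on $B$ applied to that interpolant, and the splitting of $\cD_r(\bu-\bspsi;B)$ into an $L^p$ piece and a gradient piece. The one difference is that the paper writes $e(\bv)=\frac12\sum_i\big((\bspsi_i-\bu)\otimes\nabla\varphi_i+\nabla\varphi_i\otimes(\bspsi_i-\bu)\big)$ using $\sum_i\nabla\varphi_i=0$, which avoids the neighbor-mismatch bookkeeping you identified as the main obstacle; in your version, $B((1-r)\bx_k,r)$ need not lie in $B\cap B(\bx_j,2r)$ (take $\bx_k,\bx_j\in\partial B$ at distance close to $r$), but this does not matter, because $U_j\subset B\cap B(\bx_k,3r/2)$ for $k\sim j$ and the triangle inequality on $U_j$ gives the mismatch bound directly.
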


\begin{proof}
For $r > 0$, there exists an integer $N=N(r)$ and points $\{ \bx_i \}_{i=1}^{N} \subset \overline{B}$ such that $\overline{B} \subset \cup_{i=1}^N B(\bx_i,r/4)$ and the enlarged cover is locally finite: $\sum_{i=1}^N \bb1_{B(\bx_i,2r)}(x) \leq C(n)$.
Let $\varphi_i$ be a $C^\infty$ partition of unity satisfying $0 \leq \varphi_i \leq 1$, $\supp \varphi_i \subset B(\bx_i,r)$, and $|\grad \varphi_i| \leq Cr^{-1}$.
For the sets $B_i = B \cap B(\bx_i,2r)$, define the infinitesimal rigid motion $\bspsi_i = \Pi_{B((1-r)\bx_i,r))}[\bu]$ so that \eqref{eq:CoercivityScaled:Pf2} is satisfied on $B_i$. Define the function
\begin{equation*}
    \bv = \sum_{i=1}^N \varphi_i \bspsi_i.
\end{equation*}
This function will serve as the global approximation of $\bu$ on $B$. Note that $\bv \in C^\infty(\overline{B})$. Let $\bspsi \in \cR$ satisfy
    \begin{equation}\label{eq:CoercivityScaled:Korn1st:Pf1}
        \vnorm{\bv - \bspsi}_{W^{1,p}(B)} \leq C(n,p) \vnorm{e(\bv)}_{L^p(B)}^p;
    \end{equation}
    such a $\bspsi$ exists because the classical Korn's first unconstrained inequality holds on $B$.

We begin the estimate with
\begin{equation}\label{eq:CoercivityScaled:Pf:3}
    \cD_r(\bu-\bspsi;B) \leq C(p) (\cD_r(\bu-\bv;B) + \cD_r(\bv-\bspsi;B)).
\end{equation}
and estimate the terms in the right hand side separately. Let us estimate the first term as  
\begin{equation}\label{eq:CoercivityScaled:Pf:4}
\begin{split}
    \cD_r(\bu-\bv;B) &\leq 2^{p-1} \int_B \int_B \mathds{1}_{ \{ r/2 \leq |\bx-\by| \leq r \} } (|\bu(\bx)-\bv(\bx)|^p + |\bu(\by)-\bv(\by)|^p )\, \rmd \by \, \rmd \bx \\
    &\leq C(n,p) r^n \vnorm{\bu-\bv}_{L^p(B)}^p.
\end{split}
\end{equation}
Now, $\bu-\bv = \sum_i \varphi_i(\bu - \bspsi_i)$, and Jensen's inequality along with \eqref{eq:CoercivityScaled:Pf2} implies
\begin{equation}\label{eq:CoercivityScaled:Pf7}
   \vnorm{\bu-\bv}_{L^p(B)}^p \leq C(n)^{p-1}\sum_i \int_{B_i} |\bu - \bspsi_i|^p \, \rmd \bx \leq C r^{-n} \sum_i \cE_r(\bu;B_i) \leq C(n,p) r^{-n} \cE_r(\bu;B).
\end{equation}
The last inequality follows from the local finiteness of the cover, as well as the fact that $|\bx-\by|$ is comparable to $\diam B_i$ in $\cE_r(\bu;B_i)$.

To estimate the second term in the right hand side of \eqref{eq:CoercivityScaled:Pf:3}, we set $\bw = \bv - \bspsi \in C^\infty$.  Taylor expanding  and using Jensen's inequality, we get 
\begin{equation*}
    \begin{split}
        |\bw(\bx) - \bw(\by)|^p \leq |\bx-\by|^p \int_0^1 |\grad \bw(t\bx + (1-t)\by)|^p \, \rmd t.
    \end{split}
\end{equation*}
Then changing coordinates and extending $\grad \bw$ outside $B$ by $0$ if necessary, we have
\begin{equation}\label{eq:CoercivityScaled:Pf:5}
\begin{split}
 \cD_r(\bv-\bspsi;B) =  \cD_r(\bw;B) &\leq \int_B \int_{B(0,2)} \bb1_{ \{ 1/2 \leq |\bh|/r \leq 1 \} }\int_0^1 |\grad \bw(\bx + t \bh)|^p |\bh|^{p} \, \rmd t \, \rmd \bx \, \rmd \bh \\
    &\leq C(n,p) r^{n+p} \vnorm{\grad \bw}_{L^p(B)}^p\\
    &= C(n,p) r^{n+p} \vnorm{\grad \bv - \grad\bspsi}_{L^p(B)}^p
\end{split}
\end{equation}
We estimate the right hand side of \eqref{eq:CoercivityScaled:Pf:5} further. From the choice of $\bspsi$ in  \eqref{eq:CoercivityScaled:Korn1st:Pf1} and by Korn's first unconstrained inequality  for $1 < p < \infty$ applied to $\bv-\bspsi$ on $B$ gives
\begin{equation}\label{eq:CoercivityScaled:Pf8}
\vnorm{\grad \bv-\grad \bspsi}_{L^p(B)}^p \leq C(n,p) \vnorm{e(\bv)}_{L^p(B)}^p.
\end{equation}
We directly estimate $e(\bv)$ using the formula;
\begin{equation*}
    \grad \bv = \sum_i \varphi_i \grad \bspsi_i + \bspsi_i \otimes \grad \varphi_i = \sum_i \varphi_i \grad \bspsi_i + (\bspsi_i - \bu) \otimes \grad \varphi_i,
\end{equation*}
where the latter term appears because $\sum_i \nabla \varphi_i  = 0$.
Now since $\bspsi \in \cR$,
\begin{equation*}
    e(\bv) = \frac{1}{2} \sum_i (\bspsi_i - \bu) \otimes \grad \varphi_i + \grad \varphi_i \otimes (\bspsi_i - \bu). 
\end{equation*}
Therefore, using the bounds on $\grad \varphi_i$ we have,  similarly to \eqref{eq:CoercivityScaled:Pf7}, 
\begin{equation}\label{eq:CoercivityScaled:Pf9}
    \vnorm{e(\bv)}_{L^p(B)}^p \leq C r^{-p} \sum_i \int_{B_i} |\bu - \bspsi_i|^p \, \rmd \bx \leq C r^{-n-p} \cE_r(\bu;B).
\end{equation}
Then \eqref{eq:CoercivityScaled:Pf9}  
applied to \eqref{eq:CoercivityScaled:Pf8} gives the estimate
\begin{equation}\label{eq:CoercivityScaled:11}
\begin{split}
   \vnorm{\grad \bv-\grad \bspsi}_{L^p(B)}^p
    &\leq  C r^{-n-p} \cE_r(\bu;B).
\end{split}
\end{equation}
Therefore,  
\eqref{eq:CoercivityScaled:Pf7} inserted in \eqref{eq:CoercivityScaled:Pf:4},  and  \eqref{eq:CoercivityScaled:11} inserted into \eqref{eq:CoercivityScaled:Pf:5}  gives
\begin{equation}\label{eq:CoercivityScaled:Korn1st:Pf3}
    \cD_r(\bu-\bspsi;B) \leq C \cE_r(\bu;B).
\end{equation}
Finally, we use \eqref{eq:CoercivityScaled:Pf7} and the fact that $r < 1$, as well as \eqref{eq:CoercivityScaled:Korn1st:Pf1}, to obtain
\begin{equation}\label{eq:CoercivityScaled:Korn1st:Pf4}
    r^{n+p} \vnorm{\bu - \bspsi}_{L^p(B)}^p \leq C r^{n+p} \vnorm{\bu - \bv}_{L^p(B)}^p + C r^{n+p} \vnorm{\bv - \bspsi}_{L^p(B)}^p \leq C \cE_r(\bu;B).
\end{equation}
Collecting \eqref{eq:CoercivityScaled:Korn1st:Pf3} and \eqref{eq:CoercivityScaled:Korn1st:Pf4} gives the result.
\end{proof}

Note that in the previous theorem $\bspsi$ may depend on $r$, since $\bv$ does. We next show that we can choose $\psi$ as the projection $\Pi_B$, thus providing a rigid motion independent of $r$.

\begin{corollary}\label{cor:CoercivityScaled:Korn1st}
    There exists $C(n,p) > 0$ such that the following holds: for all $\bu \in L^p(B;\bbR^n)$
    \begin{equation*}
        \cD_r(\bu- \Pi_B[\bu];B) + r^{n+p} \vnorm{\bu - \Pi_B[\bu]}_{L^p(B)}^p \leq C(n,p) \cE_r(\bu;B)
    \end{equation*}
    for each $0 < r < 1$.
\end{corollary}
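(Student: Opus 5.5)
Take $\bspsi=\bspsi_r\in\cR$ from Lemma~\ref{lma:CoercivityScaled:Korn1st}. Since $\Pi_B$ fixes $\cR$ (Lemma~\ref{lem:ProjectionBall}) and is linear, we have $\bu-\Pi_B[\bu] = (\bu-\bspsi) - \Pi_B[\bu-\bspsi]$. So it suffices to control the rigid motion $\bphi := \Pi_B[\bu-\bspsi]$ in both terms on the left. For the $L^p$ term, the $L^p$-boundedness \eqref{eq:ProjContLp} gives $\|\bphi\|_{L^p(B)}\le C\|\bu-\bspsi\|_{L^p(B)}$. Hence
\[
r^{n+p}\|\bu-\Pi_B[\bu]\|^p_{L^p(B)}\le C r^{n+p}\|\bu-\bspsi\|^p_{L^p(B)}\le C\cE_r(\bu;B)
\]
by \eqref{eq:CoercivityScaled:Korn1st}.

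For the $\cD_r$ term, write $\cD_r(\bu-\Pi_B[\bu];B)\le C(p)\big(\cD_r(\bu-\bspsi;B)+\cD_r(\bphi;B)\big)$. The first piece is bounded by $C\cE_r(\bu;B)$ by Lemma~\ref{lma:CoercivityScaled:Korn1st}. For the second, $\bphi(\bx)=\bA\bx+\bfb$ is affine, so $|\bphi(\bx)-\bphi(\by)|\le|\bA||\bx-\by|\le |\bA| r$ on the annulus $\{r/2<|\bx-\by|<r\}$. Hence $\cD_r(\bphi;B)\le C r^{n+p}|\bA|^p$. By Lemma~\ref{lem:ProjectionBall} and the equivalence of norms on $\cR$ over the unit ball, $|\bA|\le\|\bphi\|_{\mathrm{aff}}\le C\|\bu-\bspsi\|_{L^p(B)}$. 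Therefore
\[
\cD_r(\bphi;B)\le C r^{n+p}\|\bu-\bspsi\|^p_{L^p(B)}\le C\cE_r(\bu;B),
\]
again by \eqref{eq:CoercivityScaled:Korn1st}.

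The only delicate point is that the $\cD_r$ energy of a rigid motion must scale like $r^{n+p}$. This is exactly the factor weighting the $L^p$ term in Lemma~\ref{lma:CoercivityScaled:Korn1st}, and it is what keeps all constants independent of $r$. Uniformity in $r\in(0,1)$ then follows because every constant used above depends only on $n$ and $p$. The restriction $1<p<\infty$ is inherited from Lemma~\ref{lma:CoercivityScaled:Korn1st}.
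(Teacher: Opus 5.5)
Your proposal is correct and follows essentially the same route as the paper's proof. Like the paper, you take $\bspsi_r$ from Lemma~\ref{lma:CoercivityScaled:Korn1st}, use linearity of $\Pi_B$ and the fact that it fixes $\cR$ to write $\Pi_B[\bu]-\bspsi_r=\Pi_B[\bu-\bspsi_r]$, bound the $L^p$ term by $L^p$-continuity, and bound $\cD_r$ of the affine piece by $C r^{n+p}|\bA|^p \le C r^{n+p}\|\bu-\bspsi_r\|_{L^p(B)}^p$ before invoking \eqref{eq:CoercivityScaled:Korn1st}.
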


\begin{proof}
    For each $r$, let $\bspsi_r \in \cR$ be as in the previous lemma. Then by the $L^{p}$-continuity of $\Pi_B$ given in  \eqref{eq:ProjContLp}
    \begin{equation*}
    \begin{split}
        \vnorm{\bu - \Pi_B[\bu]}_{L^p(B)} &\leq \vnorm{\bu - \bspsi_r}_{L^p(B)} + \vnorm{\Pi_B[\bu] - \bspsi_r}_{L^p(B)} \\
        &= \vnorm{\bu - \bspsi_r}_{L^p(B)} + \vnorm{\Pi_B[\bu - \bspsi_r]}_{L^p(B)} \\
        &\leq (1 + C(n,p))\vnorm{\bu - \bspsi_r}_{L^p(B)},
    \end{split}
    \end{equation*}
    and by \eqref{eq:CoercivityScaled:Pf:5} (applied to $\bw = \Pi_B[\bu - \bspsi_r]$) and \eqref{eq:AffineMapEquivalence:Rescaled}
    \begin{equation*}
    \begin{split}
        \cD_r(\bu - \Pi_B[\bu];B) &\leq C(p) \cD_r(\bu - \bspsi_r;B) + C(p) \cD_r(\Pi_B[\bu] - \bspsi_r;B) \\
        &= C(p) \cD_r(\bu - \bspsi_r;B) + C(p) \cD_r(\Pi_B[\bu - \bspsi_r];B) \\
        &\leq C(n,p) \left( \cD_r(\bu - \bspsi_r;B) + r^{n+p} |\Sigma(\bu-\bspsi_r)|^p \right) \\
        &\leq C(n,p) ( \cD_r(\bu - \bspsi_r;B) + r^{n+p} \vnorm{\bu-\bspsi_r}_{L^p(B)}^p ).
    \end{split}
    \end{equation*}
    Therefore,
    \begin{equation*}
        \cD_r(\bu - \Pi_B[\bu];B) + r^{n+p} \vnorm{\bu - \Pi_B[\bu]}_{L^p(B)}^p \leq C(n,p) \left( \cD_r(\bu - \bspsi_r;B) + r^{n+p} \vnorm{\bu-\bspsi_r}_{L^p(B)}^p  \right),
    \end{equation*}
    and the result follows from the previous lemma.
\end{proof}

Finally, we state and prove  the fractional Korn's inequality for balls as follows.  
\begin{theorem}\label{thm:Korns1stOnBall}
    There exists $C(n,p) > 0$ such that for all $\bu \in \cX^{s,p}(B)$
    \begin{equation}
        \vnorm{\bu - \Pi_B[\bu]}_{L^p(B)}^p + (1-s)[\bu - \Pi_B[\bu]]_{W^{s,p}(B)}^p \leq C(n,p) (1-s) [\bu]_{\cX^{s,p}(B)}^p.
    \end{equation}
\end{theorem}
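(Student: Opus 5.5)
The plan is a dyadic decomposition of the Gagliardo seminorms around the diagonal, with Corollary \ref{cor:CoercivityScaled:Korn1st} applied at each scale. Its rigid motion $\Pi_B[\bu]$ does not depend on $r$, so the scale-by-scale estimates can be summed directly.

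Set $\bw = \bu - \Pi_B[\bu]$. For pairs with $|\bx-\by|<1$, split dyadically into shells $2^{-k-1}\le |\bx-\by| < 2^{-k}$, $k\ge 0$; these are exactly the sets appearing in $\cD_{r}$ with $r=r_k=2^{-k}$. On the $k$-th shell we have $|\bx-\by|^{-n-sp}\le 2^{(k+1)(n+sp)}$, and Corollary \ref{cor:CoercivityScaled:Korn1st} gives $\cD_{r_k}(\bw;B)\le C\,\cE_{r_k}(\bu;B)$. Hence
\begin{equation*}
\int_B\int_B \bb1_{\{|\bx-\by|<1\}}\frac{|\bw(\bx)-\bw(\by)|^p}{|\bx-\by|^{n+sp}}\,\rmd\by\,\rmd\bx \le C\sum_{k\ge 0} 2^{k(n+sp)}\cE_{2^{-k}}(\bu;B).
\end{equation*}
(For $k=0$ we need $r<1$, so we use $r$ slightly below $1$, or treat $k=0$ together with the far-field part.) Each $\cE_{2^{-k}}$ integrates the projected difference over $2^{-k-3}<|\bx-\by|<2^{-k+2}$, where $|\bx-\by|^{n+sp}\le 2^{(-k+2)(n+sp)}$. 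So $2^{k(n+sp)}\cE_{2^{-k}}(\bu;B)$ is bounded by a constant $C(n,p)$ times the $\cX^{s,p}$ integrand integrated over that annulus. These annuli overlap only a bounded number of times (at most about $5$), so the sum is at most $C[\bu]_{\cX^{s,p}(B)}^p$, with $C$ independent of $s$ because $s\in(0,1)$ enters only through $2^{\pm sp}$ factors bounded by $2^p$.

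The far range $1\le|\bx-\by|<2$ is bounded by $C\|\bw\|_{L^p(B)}^p$. To get the $L^p$ term with the correct power of $1-s$, I would use the corollary's lower-order part, $r^{n+p}\|\bw\|_{L^p}^p\le C\cE_r(\bu;B)$, for every $r=2^{-k}$. Multiplying by $2^{k(n+sp)}$ gives
\begin{equation*}
2^{-k(1-s)p}\,\|\bw\|_{L^p(B)}^p\le C\,2^{k(n+sp)}\cE_{2^{-k}}(\bu;B).
\end{equation*}
Summing over $k\ge 0$, the left side becomes $\|\bw\|_{L^p}^p\sum_k 2^{-k(1-s)p}$, and $\sum_k 2^{-k(1-s)p}\approx \frac{1}{(1-s)p\ln 2}\ge \frac{c(p)}{1-s}$. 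The right side is at most $C[\bu]^p_{\cX^{s,p}}$ by the same bounded-overlap argument. This yields $\|\bw\|_{L^p}^p\le C(1-s)[\bu]_{\cX^{s,p}}^p$, which is the key gain. Feeding it into the far-range term controls that term by $C(1-s)[\bu]^p_{\cX^{s,p}}\le C[\bu]^p_{\cX^{s,p}}$. Together with the near-diagonal sum this gives $[\bw]^p_{W^{s,p}(B)}\le C[\bu]^p_{\cX^{s,p}}$, and multiplying by $(1-s)$ gives the stated inequality.

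The main obstacle is the bookkeeping that keeps constants independent of $s$: the bounded overlap of the $\cE$-annuli, and the fact that the $L^p$ bound must come from summing the weak per-scale bounds to produce the factor $1/(1-s)$ rather than from a single scale. A second point is the qualitative one of whether $[\bw]_{W^{s,p}}$ is finite a priori. The argument above avoids this, since it bounds $\bw$'s seminorm directly by sums of $\cE_r(\bu;B)$, which are finite for $\bu\in\cX^{s,p}(B)\subset L^p$; here the $L^p$-boundedness of $\Pi_B$ is what makes this work. Finally, for the $k=0$ scale I would apply the corollary with $r=1/2$ for the shell $[1/2,1)$, and treat $|\bx-\by|\ge1$ as the far range above.
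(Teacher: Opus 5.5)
Your proposal is correct and follows the paper's proof. Both arguments use dyadic shells, apply Corollary \ref{cor:CoercivityScaled:Korn1st} at each scale $2^{-k}$ with the $r$-independent rigid motion $\Pi_B[\bu]$, extract the $L^p$ term from the weighted sum $\sum_k 2^{-k(1-s)p}\gtrsim (1-s)^{-1}$, and use bounded overlap of the $\cE_{2^{-k}}$ annuli.

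The only difference is at the top scales $|\bx-\by|\ge 1/2$. There the paper invokes Lemma \ref{lma:CoercivityUnscaled}, while you bound these scales by $C\vnorm{\bw}_{L^p(B)}^p$ and reuse the summed $L^p$ estimate over $k\ge 1$; this works. Note, though, that the corollary at $r=1/2$ controls the shell $(1/4,1/2)$, not $[1/2,1)$, so the shell $[1/2,1)$ must go into the far-field bound as you also suggest.
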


\begin{proof}
 For each $k \ge -1$, we decompose the product domain $B\times B$ into the dyadic annuli  as $B \times B  = \bigcup_{k=-1}^\infty A_k$ where 
  \[
    A_k=\left\{ (\bx, \by) \in B \times B : 2^{-k-1} \le |\bx - \by| \le 2^{-k} \right\}.
  \]
  Then there exists a constant $C$ independent of $k$ such that
    \begin{equation*}
        \begin{split}
            [\bu-\Pi_B[\bu]]_{W^{s,p}(B)}^p \leq C(n,p) \sum_{k = -1}^\infty (2^{-k})^{-n-sp} \cD_{2^{-k}}(\bu - \Pi_B[\bu];B).
        \end{split}
    \end{equation*}
  where we used the fact that on $D_k$, the singular kernel satisfies the two-sided bound:
  \[
    2^{k(n+ps)} \le \frac{1}{|\bx - \by|^{n+ps}} \le 2^{(k+1)(n+ps)}.
  \]
    Next, we use the bounds $\frac{1}{1-s} \frac{1}{p \ln(2)} \leq \sum_{k=0}^\infty 2^{-k(p-sp)} \leq \frac{2^p}{2^p-1} \frac{1}{1-s}$, valid for all $s \in (0,1)$, to write
    \begin{equation*}
        \frac{1}{1-s} \vnorm{\bu-\Pi_B[\bu]}_{L^p(B)}^p \leq C(p) \sum_{k = -1}^\infty 2^{-k(p-sp)} \vnorm{\bu-\Pi_B[\bu]}_{L^p(B)}^p.
    \end{equation*}
    The $k = -1$ and $k = 0$ terms in both series are bounded from above by 
    $C(n,p) \vnorm{\bu-\Pi_B[\bu]}_{L^p(B)}^p$, which is in turn controlled by $C(n,p) \cE_1(\bu;B)$ thanks to Lemma \ref{lma:CoercivityUnscaled}. Thus we can apply Lemma \ref{cor:CoercivityScaled:Korn1st} for all other terms in both series to obtain a constant $C = C(n,p)$ independent of $k$ such that
    \begin{equation*}
        \begin{split}
            [\bu-\Pi_B[\bu]]_{W^{s,p}(\Omega)}^p &+ \frac{1}{1-s} \vnorm{\bu-\Pi_B[\bu]}_{L^p(B)}^p  \\
            \leq& C(n,p) \sum_{k = -1}^\infty (2^{-k})^{-n-sp} \left( \cD_{2^{-k}}(\bu - \Pi_B[\bu];B) + (2^{-k})^{n+p} \vnorm{\bu-\Pi_B[\bu]}_{L^p(B)}^p \right) \\
            \leq& C(n,p) \sum_{k = -1}^\infty (2^{-k})^{-n-sp} \cE_{2^{-k}}(\bu;B).
        \end{split}
    \end{equation*}
    Even though the annuli are enlarged in the definition of $\cE_{2^{-k}}(\bu;B)$, the sets $ \{ (\bx,\by) \in B \times B \, : \, 2^{-k-3} \leq |\bx-\by| \leq 2^{-k+2} \} $ still form a locally finite cover of $B \times B$. Moreover, $|\bx-\by| \approx 2^{-k}$ still holds on each domain, so we have
    \begin{equation*}
        \sum_{k = -1}^\infty (2^{-k})^{-n-sp} \cE_{2^{-k}}(\bu;B) \leq C(n,p) [\bu]_{\cX^{s,p}(B)}^p,
    \end{equation*}
    as desired.
\end{proof}

After rescaling, we also have the dependence of the constants on the diameter of a ball.
\begin{theorem}\label{thm:KornBallStrong}There exists $C(n,p) > 0$ such that 
    for any $\bx_0 \in \bbR^n$, $R > 0$, $B_R = B(\bx_0,R)$, and $\bu \in \cX^{s,p}(B_R)$    
    \begin{equation}
        R^{-sp} \vnorm{\bu - \Pi_{B_R}[\bu]}_{L^p(B_R)}^p + (1-s)[\bu - \Pi_{B_R}[\bu]]_{W^{s,p}(B_R)}^p \leq C(n,p) (1-s) [\bu]_{\cX^{s,p}(B_R)}^p.
    \end{equation}
\end{theorem}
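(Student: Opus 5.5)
The plan is to deduce Theorem \ref{thm:KornBallStrong} from Theorem \ref{thm:Korns1stOnBall} by translation and dilation. Given $\bu \in \cX^{s,p}(B_R)$, set $\bv(\bz) := \bu(\bx_0 + R\bz)$ for $\bz \in B$. The first step is to record the scaling of each quantity. Under the change of variables $\bx = \bx_0 + R\bz$, $\by = \bx_0 + R\bw$ we have $\rmd\bx\,\rmd\by = R^{2n}\,\rmd\bz\,\rmd\bw$ and $|\bx-\by|^{n+sp} = R^{n+sp}|\bz-\bw|^{n+sp}$. The unit vector $\frac{\bx-\by}{|\bx-\by|}$ is unchanged. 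Hence
\begin{equation*}
[\bu]_{\cX^{s,p}(B_R)}^p = R^{n-sp}[\bv]_{\cX^{s,p}(B)}^p, \qquad [\bu]_{W^{s,p}(B_R)}^p = R^{n-sp}[\bv]_{W^{s,p}(B)}^p, \qquad \vnorm{\bu}_{L^p(B_R)}^p = R^n \vnorm{\bv}_{L^p(B)}^p.
\end{equation*}
In particular $\bv \in \cX^{s,p}(B)$, so Theorem \ref{thm:Korns1stOnBall} applies to $\bv$.

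The second step is to check that the projection commutes with the rescaling. By the definition of $\Pi_{B(\bx_0,R)}$ we have $\Pi_{B_R}[\bu](\bx_0 + R\bz) = \Pi_B[\bv](\bz)$. So the function $\bu - \Pi_{B_R}[\bu]$, composed with $\bz \mapsto \bx_0 + R\bz$, is exactly $\bv - \Pi_B[\bv]$. Applying the scaling identities to $\bu - \Pi_{B_R}[\bu]$ gives
\begin{equation*}
\vnorm{\bu - \Pi_{B_R}[\bu]}_{L^p(B_R)}^p = R^n \vnorm{\bv - \Pi_B[\bv]}_{L^p(B)}^p, \qquad [\bu - \Pi_{B_R}[\bu]]_{W^{s,p}(B_R)}^p = R^{n-sp}[\bv - \Pi_B[\bv]]_{W^{s,p}(B)}^p.
\end{equation*}
We also need $\Pi_{B_R}[\bu] \in \cR$; this holds because $\Pi_B[\bv](\tfrac{\bx-\bx_0}{R})$ is again affine with a skew-symmetric matrix part.

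The final step is to multiply the inequality of Theorem \ref{thm:Korns1stOnBall} for $\bv$ by $R^{n-sp}$. The seminorm terms match exactly. The $L^p$ term becomes $R^{n-sp}\vnorm{\bv - \Pi_B[\bv]}_{L^p(B)}^p$, which equals $R^{-sp}\vnorm{\bu - \Pi_{B_R}[\bu]}_{L^p(B_R)}^p$. The constant $C(n,p)$ is unchanged because no $R$-dependent factor enters.

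There is no genuine obstacle here. The only point to be careful about is that the $L^p$ norm and the Gagliardo seminorm scale differently, by $R^n$ versus $R^{n-sp}$. This mismatch is exactly what produces the weight $R^{-sp}$ in front of the $L^p$ term. It also explains why no $\max\{1,R\}$-type factors, as in \eqref{eq:AffineMapEquivalence:Rescaled}, appear: the projection is defined by rescaling, so it never has to be compared with an affine norm.
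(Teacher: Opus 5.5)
Your proposal is correct and follows the paper's approach: the paper obtains Theorem \ref{thm:KornBallStrong} from Theorem \ref{thm:Korns1stOnBall} simply ``after rescaling'' and gives no further proof. Your change of variables $\bx=\bx_0+R\bz$, the identity $\Pi_{B_R}[\bu](\bx_0+R\bz)=\Pi_B[\bv](\bz)$, and the two scaling factors ($R^{n}$ for the $L^p$ norm versus $R^{n-sp}$ for both seminorms) supply exactly those omitted details, including why the factor $R^{-sp}$ appears in front of the $L^p$ term.
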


\subsection{The case \texorpdfstring{$p=1$}{p=1}}\label{subsec:1}

In this section, we prove the following:

\begin{theorem}\label{thm:Korns1stOnBall:L1}
    Let $B = B(0,1)$ and let $0 < s < 1$. Then there exists $C = C(n) > 0$ such that for all $\bu \in \cX^{s,1}(B;\bbR^n)$
    \begin{equation}\label{eq:Korns1stOnBall:L1}
        \vnorm{\bu - \Pi_B[\bu]}_{L^1(B)} + (1-s)[\bu - \Pi_B[\bu]]_{W^{s,1}(B)} \leq C [\bu]_{\cX^{s,1}(B)}.
    \end{equation}
\end{theorem}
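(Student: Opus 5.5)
The plan is to follow the dyadic scheme of Theorem~\ref{thm:Korns1stOnBall}, but replace the one step that breaks at $p=1$. That step is the use of the classical Korn inequality \eqref{eq:CoercivityScaled:Korn1st:Pf1} in Lemma~\ref{lma:CoercivityScaled:Korn1st}, which is false in $W^{1,1}$. In its place we compare local rigid motions across scales directly, accepting a loss of one factor $(1-s)^{-1}$.

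\textbf{The $L^1$ term.} This part is immediate. Lemma~\ref{lma:CoercivityUnscaled} holds for $p=1$ and gives $\vnorm{\bu-\Pi_B[\bu]}_{L^1(B)}\le C\,\cE_1(\bu;B)$. Since $|\bx-\by|\approx 1$ on the support of $\cE_1$, we have $\cE_1(\bu;B)\le C[\bu]_{\cX^{s,1}(B)}$ with $C=C(n)$ uniform in $s$.

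\textbf{Seminorm: setup at each scale.} As in Theorem~\ref{thm:Korns1stOnBall}, we bound
\[
[\bu-\Pi_B[\bu]]_{W^{s,1}(B)}\le C\sum_{k\ge -1}2^{k(n+s)}\,\cD_{2^{-k}}(\bu-\Pi_B[\bu];B).
\]
It therefore suffices to prove $(1-s)\sum_k 2^{k(n+s)}\cD_{2^{-k}}(\bu-\Pi_B[\bu];B)\le C[\bu]_{\cX^{s,1}(B)}$. Fix $r=2^{-k}$ and take the cover $\{B(\bx_i,r/4)\}$, the partition of unity $\varphi_i$, the rigid motions $\bspsi_i^{(r)}=\Pi_{B((1-r)\bx_i,r)}[\bu]=\bA_i^{(r)}\bx+\bfb_i^{(r)}$, and $\bv_r=\sum_i\varphi_i\bspsi^{(r)}_i$, all as in the proof of Lemma~\ref{lma:CoercivityScaled:Korn1st}. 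Exactly as there (the argument is valid for $p=1$), Lemma~\ref{lma:CoercivityScaled} gives
\[
\cD_r(\bu-\bv_r;B)\le Cr^n\vnorm{\bu-\bv_r}_{L^1(B)}\le C\,\cE_r(\bu;B).
\]
The remaining term is $\cD_r(\bv_r-\Pi_B[\bu];B)$. By \eqref{eq:CoercivityScaled:Pf:5} it is at most $Cr^{n+1}\vnorm{\grad\bv_r-\bA}_{L^1(B)}$, where $\Pi_B[\bu]=\bA\bx+\bfb$. Moreover,
\[
\grad\bv_r-\bA=\sum_i\varphi_i(\bA^{(r)}_i-\bA)+\sum_i(\bspsi_i^{(r)}-\bu)\otimes\grad\varphi_i .
\]
The second sum has $L^1$ norm at most $Cr^{-1-n}\cE_r(\bu;B)$, as in \eqref{eq:CoercivityScaled:Pf9}.

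\textbf{Main obstacle: controlling $\sum_i\varphi_i|\bA_i^{(r)}-\bA|$ without Korn.} We use a chaining argument in scale rather than in space. For a point $\bx$ and dyadic radii $\rho$ and $2\rho$, let $D$ be a ball of radius $\rho$ inside $B$ near $\bx$ and $D'$ the corresponding ball of radius $2\rho$ containing it. The matrix norm equivalence \eqref{eq:AffineMapEquivalence:Rescaled:MatrixPart} on $D$ gives
\[
|\bA^{(\rho)}(\bx)-\bA^{(2\rho)}(\bx)|\le C\rho^{-n-1}\bigl(\vnorm{\bu-\bspsi^{(\rho)}}_{L^1(D)}+\vnorm{\bu-\bspsi^{(2\rho)}}_{L^1(D')}\bigr).
\]
By Lemma~\ref{lma:CoercivityScaled}, the right-hand side is at most $C\rho^{-2n-1}\bigl(\cE_\rho+\cE_{2\rho}\bigr)$ localized near $\bx$. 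At scale $\approx 1$ the matrix is $\bA$ itself, up to $C\,\cE_1$, by the same argument. Telescoping and integrating in $\bx$, and using local finiteness of the covers at each scale, we expect
\[
\sum_i\!\int\varphi_i|\bA_i^{(r)}-\bA|\le C\sum_{\rho=2^{-j}\ge r}\rho^{-n-1}\cE_\rho(\bu;B).
\]
The step most likely to need care is making overlapping balls at consecutive scales nest properly near $\partial B$; the shifted centers $(1-r)\bx_i$ of Lemma~\ref{lma:CoercivityScaled} are designed for exactly this.

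\textbf{Summation.} Combining the pieces gives
\[
2^{k(n+s)}\cD_r(\bu-\Pi_B[\bu];B)\le C r^{-n-s}\cE_r+C\,r^{1-s}\sum_{\rho\ge r}\rho^{-n-1}\cE_\rho .
\]
Summing over $k$ and interchanging the order of summation, $\sum_{r\le\rho}r^{1-s}\le C\rho^{1-s}/(1-s)$. The double sum is therefore at most $\frac{C}{1-s}\sum_\rho\rho^{-n-s}\cE_\rho(\bu;B)\le\frac{C}{1-s}[\bu]_{\cX^{s,1}(B)}$, by the locally-finite-annuli argument of Theorem~\ref{thm:Korns1stOnBall}. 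Multiplying by $(1-s)$ and adding the $L^1$ bound yields \eqref{eq:Korns1stOnBall:L1}. The factor $(1-s)^{-1}$ lost in the interchange is exactly the degeneration announced in Theorem~\ref{thm:mainKorn1st:Endpoints}.
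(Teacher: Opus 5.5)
Your proposal is correct and is essentially the paper's argument. Your per-scale bound $\cD_r(\bu-\Pi_B[\bu];B)\le C\sum_{\rho\ge r}(r/\rho)^{n+1}\cE_\rho(\bu;B)$ is exactly Lemma~\ref{lma:CoercivityScaled:Korn1st:L1}. It is obtained the same way: local rigid motions are chained across dyadic scales, with matrix differences controlled on common balls via \eqref{eq:AffineMapEquivalence:Rescaled:MatrixPart} and Lemma~\ref{lma:CoercivityScaled}. The same Tonelli/geometric-series interchange then produces the $(1-s)^{-1}$. The only difference is organizational: you keep the partition-of-unity field $\bv_r$ and chain pointwise in $\bx$, whereas the paper telescopes $\bspsi_{j+1}-\bspsi_j$ directly on $M_0U\times M_0U$ along nested parent chains and counts those chains via Lemma~\ref{lma:L1Cover}(iii).
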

Although \eqref{eq:CoercivityScaled:Pf2} holds for $p = 1$, we cannot make use of an argument like that in the proof of Lemma \ref{lma:CoercivityScaled:Korn1st}, since in the regime $p = 1$ we cannot make use of any classical local Korn inequality.  In fact, an estimate of the form \eqref{eq:CoercivityScaled:Korn1st} cannot hold when $p = 1$; multiplying \eqref{eq:CoercivityScaled:Korn1st} by $r^{-n-p}$ and taking $r \to 0$ formally recovers a classical Korn's first inequality statement for $L^1$, a contradiction. 
Our proof of \eqref{eq:Korns1stOnBall:L1} requires a genuinely nonlocal estimate incorporating all dyadic scales of $r$ in the functionals $\cE_r$, instead of the single scale present in \eqref{eq:CoercivityScaled:Korn1st}.

\begin{lemma}\label{lma:CoercivityScaled:Korn1st:L1}
    There exists $C(n) > 0$ such that for all $\bu \in L^1(B;\bbR^n)$ and for all $k \in \bbN_0$
    \begin{equation}\label{eq:CoercivityScaled:Korn1st:L1}
        \cD_{2^{-k}}(\bu- \Pi_B[\bu];B) + 2^{-k(n+1)} \vnorm{\bu - \Pi_B[\bu]}_{L^1(B)} \leq C \sum_{j=0}^k \frac{2^{-k(n+1)}}{2^{-j(n+1)}} \cE_{2^{-j}}(\bu;B).
    \end{equation}
\end{lemma}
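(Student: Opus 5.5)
The plan is to estimate the two terms on the left of \eqref{eq:CoercivityScaled:Korn1st:L1} separately, and to handle the $\cD$-term with a multiscale chaining argument that replaces the classical Korn step used in Lemma~\ref{lma:CoercivityScaled:Korn1st}.

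\emph{The $L^1$ term.} By Lemma~\ref{lma:CoercivityUnscaled} with $p=1$,
\[
\vnorm{\bu-\Pi_B[\bu]}_{L^1(B)}\le C\,\cE_1(\bu;B).
\]
After multiplying by $2^{-k(n+1)}$, this is exactly the $j=0$ term of the right-hand side.

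\emph{Local rigid motions at each scale.} Fix $r=2^{-k}$ and write $\bw=\bu-\Pi_B[\bu]$. For every scale $j=0,\dots,k$, take the covering from the proof of Lemma~\ref{lma:CoercivityScaled:Korn1st} at radius $2^{-j}$: points $\bx_i^j\in\overline B$ with bounded overlap of the enlarged balls. Set $B_i^j=B\cap B(\bx_i^j,2^{1-j})$ and let $\bspsi_i^j=\Pi_{B((1-2^{-j})\bx_i^j,2^{-j})}[\bu]$, with matrix part $\bA_i^j$. Lemma~\ref{lma:CoercivityScaled} with $p=1$ then gives
\[
\vnorm{\bu-\bspsi_i^j}_{L^1(B_i^j)}\le C2^{jn}\cE_{2^{-j}}(\bu;B_i^j).
\]
At scale $j=0$ a single ball suffices. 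I would identify $\bspsi^0$ with $\Pi_B[\bu]$ itself, whose matrix part is $\bA_0$, and the estimate above then holds by Lemma~\ref{lma:CoercivityUnscaled}.

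\emph{Splitting the $\cD$-term.} For a pair $(\bx,\by)$ with $r/2\le|\bx-\by|\le r$, choose $i$ with $\bx,\by\in B_i^k$ and write
\[
\bw=(\bu-\bspsi_i^k)+(\bspsi_i^k-\Pi_B[\bu]).
\]
The second piece is affine with matrix $\bA_i^k-\bA_0$, so its difference at $\bx,\by$ is at most $r|\bA_i^k-\bA_0|$; the translation parts cancel. Summing over $i$ with bounded overlap gives
\[
\cD_r(\bw;B)\le Cr^n\sum_i\vnorm{\bu-\bspsi_i^k}_{L^1(B_i^k)}+Cr^{n+1}\sum_i|B_i^k|\,|\bA_i^k-\bA_0|.
\]
The first sum is at most $C\cE_r(\bu;B)$, which is the $j=k$ term.

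\emph{Chaining the matrices across scales (the main step).} For each $\bx\in B$ pick a nested chain of balls $B^j(\bx)\ni\bx$, one at each scale $j=0,\dots,k$, with consecutive balls overlapping in a set of measure $\gtrsim2^{-jn}$; by the Lipschitz geometry of $B$ I would take each overlap to contain a ball of radius $c2^{-j}$. On such an overlap, \eqref{eq:AffineMapEquivalence:Rescaled:MatrixPart} with $p=1$ gives
\[
|\bA^j-\bA^{j-1}|\le C2^{j(n+1)}\big(\vnorm{\bu-\bspsi^j}_{L^1(B^j)}+\vnorm{\bu-\bspsi^{j-1}}_{L^1(B^{j-1})}\big).
\]
By the local coercivity estimate, the right-hand side is at most
\[
C2^{j(2n+1)}\big(\cE_{2^{-j}}(\bu;B^j)+\cE_{2^{1-j}}(\bu;B^{j-1})\big).
\]
Telescoping gives $|\bA_i^k-\bA_0|\le\sum_{j\le k}|\bA^j-\bA^{j-1}|$. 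Replacing $\sum_i|B_i^k|(\cdot)$ by $\int_B(\cdot)\,\rmd\bx$ and using bounded overlap, we have $\int_B\cE_{2^{-j}}(\bu;B^j(\bx))\,\rmd\bx\le C2^{-jn}\cE_{2^{-j}}(\bu;B)$. Hence
\[
r^{n+1}\sum_i|B_i^k|\,|\bA_i^k-\bA_0|\le C\sum_{j=0}^k2^{-k(n+1)}2^{j(n+1)}\cE_{2^{-j}}(\bu;B).
\]
The slight mismatch between the annuli $\cE_{2^{1-j}}$ and $\cE_{2^{-j}}$ is absorbed by shifting the index $j$ by one; this costs only a factor $2^{n+1}$.

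I expect the hardest part to be the bookkeeping in this last step. The chain must be chosen measurably in $\bx$, the overlaps between consecutive scales must be uniformly large even near $\partial B$, and the exponents of $2^j$ must come out to exactly $2^{j(n+1)}$ with no stray logarithmic factor in $k$.
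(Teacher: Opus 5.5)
Your proposal is correct and follows the paper's strategy. You take local rigid motions at each dyadic scale from Lemma \ref{lma:CoercivityScaled}, handle the $L^1$ term by Lemma \ref{lma:CoercivityUnscaled}, telescope only the matrix parts along a chain of scales from $B$ down to $2^{-k}$, and control each jump $|\bA^j-\bA^{j-1}|$ by \eqref{eq:AffineMapEquivalence:Rescaled:MatrixPart} on an overlap ball of radius $\sim 2^{-j}$, giving the same exponents $2^{(j-k)(n+1)}$. The only difference is bookkeeping: the paper counts, via Lemma \ref{lma:L1Cover}(iii), that at most $C2^{n(k-j)}$ scale-$k$ sets descend from a given scale-$j$ set, whereas you choose chains pointwise and integrate, so $\int_B\cE_{2^{-j}}(\bu;B^j(\bx))\,\rmd\bx\le C2^{-jn}\cE_{2^{-j}}(\bu;B)$ does that counting for you. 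The worries you flag are harmless: choose the smallest index $i$ with $\bx\in B(\bx_i^j,2^{-j}/4)$ for measurability, and for overlaps near $\partial B$ use $B\cap B(\bz,\rho)\supset B((1-\rho/2)\bz,\rho/2)$ for $\bz\in\overline B$, $\rho\le 2$.
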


The proof of the lemma relies on the following covering of the unit ball $B$.  
Let us denote by $X_k$ the grid vertices; that is,  $X_k \subset \bbR^n$ are all vectors whose entries are $a2^{-k-2}$ with $a \in \bbZ$. We now place a ball of radius $r_k = 2^{-k}$ around each $\bx_k \in X_k\cap B$.
Set $\cF_0 = \{ B \}$, and for $k \geq 1$ take $\cF_k = \{ B(\bx,2^{-k}) \cap B \, : \, \bx \in X_k \cap B \}$. It is then clear that $\cF_k$  is an open cover of $B$. 
Moreover, for every  $U \in \cF_k$, $\frac{1}{4} \cdot 2^{-k} \leq \diam U \leq 4 \cdot 2^{-k}$, and there exists $x_U \in B$ such that \[B(x_U, \frac{1}{4} \cdot 2^{-k}) \subset U \subset B(x_U, 4 \cdot 2^{-k}).\]
We say that $U \in \cF_k$ has a parent $\wt{U} \in \cF_{k-1}$ if there exists $\wt{\bx} \in B$ such that $B(\wt{\bx},\frac{1}{4} \cdot 2^{-k}) \subset U \cap \wt{U}$. By construction, it is not difficult to see that every $U \in \cF_k$ has at least one parent $\wt{U} \in \cF_{k-1}$.
This covering satisfies additional properties which we summarize below. 

\begin{lemma}\label{lma:L1Cover}
    There exist dimensional constants $C > 0$ and $M_0 \geq 1$ such that the following properties of  the collection of the open coverings $\{\cF_k\}_{k\in \mathbb{N}_0}$ holds:
    \begin{enumerate}

        \item[{\bf i)}] For every $k \in \bbN_0$, the dilated collection $\{ M_0 U \, : \, U \in \cF_k \}$ is locally finite, and each point of $B(0,1)$ is contained in at most $C M_0^n$ of the elements of $\cF_k$.

        \item[{\bf ii)}] $\{ (\bx,\by) \in B \times B \, : \, 2^{-k-1} < |\bx-\by| < 2^{-k} \} \subset \bigcup_{U \in \cF_k} M_0 U \times M_0 U$.

        \item[{\bf iii)}] For $k > j$, a sequence $\{ U_j, U_{j+1}, \ldots, U_k \}$ is a nested chain between $U \in \cF_k$ and $\wt{U} \in \cF_j$ if $U_j = \wt{U}$, $U_k = U$, $U_\ell \in \cF_\ell$ for $\ell = j,j+1,\ldots,k$ and $U_{\ell - 1}$ is a parent of $U_\ell$ for $\ell = j+1,\ldots,k$. For any $\wt{U} \in \cF_j$, the set $\cM_{j,k}(\wt{U}) = \{ U \in \cF_k \, : \, \exists \text{ a nested chain between } U \text{ and } \wt{U} \}$ has cardinality at most $C M_0^n 2^{n(k-j)}$.
    \end{enumerate}
\end{lemma}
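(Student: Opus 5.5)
The plan is to prove the three properties directly from the lattice structure of $X_k$ and the radius $r_k = 2^{-k}$. The basic observation is that $X_k$ is the lattice $2^{-k-2}\bbZ^n$, whose spacing $2^{-k-2}$ is a fixed fraction ($1/4$) of the radius of the balls in $\cF_k$. We fix $M_0$ as a dimensional constant, say $M_0 = 8$; here $M_0 U$ means the ball concentric with the defining ball $B(\bx,2^{-k})$ and of radius $M_0 2^{-k}$, intersected with $B$ or not, as convenient. For $k=0$ the family is $\{B\}$ and every property is trivial, so we work with $k \geq 1$.

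\textbf{(i)} Fix $\bz \in B$. The centers $\bx \in X_k$ with $\bz \in B(\bx, M_0 2^{-k})$ lie in $B(\bz, M_0 2^{-k})$. A ball of radius $\rho$ contains at most $C(n)(\rho/h + 1)^n$ points of a lattice of spacing $h$. With $\rho = M_0 2^{-k}$ and $h = 2^{-k-2}$, this gives at most $C(n)(4M_0+1)^n \leq C M_0^n$ centers, uniformly in $k$. Since $\cF_k$ is finite, local finiteness follows.

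\textbf{(ii)} Let $2^{-k-1} < |\bx - \by| < 2^{-k}$ with $\bx,\by \in B$. Because $\cF_k$ covers $B$, there is a center $\bx_U$ with $|\bx - \bx_U| < 2^{-k}$. Then $|\by - \bx_U| < 2^{-k+1}$. Both points therefore lie in $B(\bx_U, 2\cdot 2^{-k}) \subset M_0 U$ as soon as $M_0 \geq 2$. The choice $M_0 = 8$ leaves room for the comparability $U \subset B(x_U, 4\cdot 2^{-k})$ used elsewhere.

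\textbf{(iii)} This is the step needing the most care. I would first show that a parent is close to its child: if $\wt U \in \cF_{\ell-1}$ is a parent of $U \in \cF_\ell$, the two sets share a point, so their centers satisfy
\[
|\bx_U - \bx_{\wt U}| \leq 2^{-\ell} + 2^{-\ell+1} = 3\cdot 2^{-\ell}.
\]
Summing this along a nested chain $U_j,\dots,U_k$ gives
\[
|\bx_{U_k} - \bx_{U_j}| \leq 3\sum_{\ell=j+1}^{k} 2^{-\ell} \leq 3\cdot 2^{-j}.
\]
Hence every $U \in \cM_{j,k}(\wt U)$ has its center in $X_k \cap B(\bx_{\wt U}, 3\cdot 2^{-j})$. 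By the same lattice counting as in (i), with radius $3\cdot 2^{-j}$ and spacing $2^{-k-2}$, there are at most
\[
C(n)\bigl(12\cdot 2^{k-j} + 1\bigr)^n \leq C\,2^{n(k-j)} \leq C M_0^n 2^{n(k-j)}
\]
such centers. Distinct elements of $\cF_k$ correspond to distinct centers, so this bounds the cardinality. The step I expect to be the main obstacle is this geometric-series control of center drift along chains: the counting works only because the drift stays $O(2^{-j})$ and does not accumulate to something like $(k-j)2^{-j}$.

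The remaining bookkeeping concerns the case $j = 0$, where $\wt U = B$. There the bound is immediate, since $|X_k \cap B| \leq C 2^{nk}$.
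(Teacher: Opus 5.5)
Your proposal is correct and takes essentially the same route as the paper. You use lattice counting in $2^{-k-2}\mathbb{Z}^n$ for (i) and the covering property for (ii); for (iii) you bound the drift of centers along a nested chain by a geometric series of order $2^{-j}$, then count the points of $X_k$ in a ball of radius comparable to $2^{-j}$. The differences are cosmetic: the paper takes $M_0=2$ and uses the ball of radius $2^{-\ell-2}$ inside $U_\ell\cap U_{\ell-1}$ to get the drift bound $\tfrac52\,2^{-\ell}$, whereas you use only that parent and child intersect, which gives $3\cdot 2^{-\ell}$ and works equally well.
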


\begin{proof}
    Set $M_0 = 2$. 
  Then, for the collection of the open cover $\cF_k$ outlined above,  properties i) and ii) are readily verified with $C = 10^n$, where we are denoting $M_0 U = B(\bx,M_02^{-k}) \cap B$. To see property iii), 
    let $k > j$ and suppose there exists a nested chain between $B(\bx_k,2^{-k})$ and $B(\bx_j,2^{-j})$. Let $B(\bx_\ell,2^{-\ell})$, $\ell = j,\ldots,k$, denote the chain. For each $\ell$, property ii) implies there exists $B(\bz,2^{-\ell-2}) \subset B_\ell \cap B_{\ell-1}$, and so
    \begin{equation*}
        |\bx_\ell - \bz| \leq 2^{-\ell} - 2^{-\ell-2} = \frac{3}{4}2^{-\ell} \qquad \text{ and } \qquad |\bx_{\ell-1} - \bz| \leq 2^{-\ell+1} - 2^{-\ell-2} = \frac{7}{4} 2^{-\ell}.
    \end{equation*}
    Thus $|\bx_\ell - \bx_{\ell-1}| \leq \frac{5}{2} 2^{-\ell}$, and so
    $
        |\bx_j - \bx_k| \leq \sum_{\ell = j+1}^{k} |\bx_{\ell-1} - \bx_{\ell}| \leq \frac{5}{2} (2^{-j} - 2^{-k}).
    $
    Both $2^{k+2} \bx_j$ and $2^{k+2} \bx_k$ can be written as vectors with integer entries, thus there exists $\ba \in \bbZ^n$ that satisfies
    \begin{equation*}
        |\ba| = 2^{k+2} |\bx_j-\bx_k| \leq 10(2^{k-j} - 1).
    \end{equation*}
    To see that property iii) follows from this bound for arbitrary $\bx_k$ and $\bx_j$, we observe that for any $R > 0$ the cardinality of the set $\{ \ba \in \bbZ^n \, : \, |\ba| \leq R\}$ is bounded by $(2R + 1)^n$.
\end{proof}

\begin{proof}[Proof of Lemma \ref{lma:CoercivityScaled:Korn1st:L1}]
    We denote $\Pi_B[\bu] = \bspsi_0 = \bA \bx + \bfb$; Lemma \ref{lma:CoercivityUnscaled}
    implies
    \begin{equation}
        \vnorm{\bu-\bspsi_0}_{L^1(B)} \leq C(n) \cE_{1}(\bu;B).
    \end{equation}
    Thus the inequality for the $L^1$ norm is established, and we only need to estimate $\cD_{2^{-k}}(\bu-\bspsi_0;B)$.
    Fix $k \in \bbN_0$. By property ii) of the cover in Lemma \ref{lma:L1Cover},
    \begin{equation}\label{eq:CoercivityScaled:Korn1st:L1:Pf2}
        \cD_{2^{-k}}(\bu-\bspsi_0;B) \leq C(n) 
        \sum_{U \in \cF_k} \cD_{2^{-k}}(\bu-\bspsi_0;M_0 U).
    \end{equation}
    where used the local finiteness of the dilations as stated in property i) of Lemma \ref{lma:L1Cover}. 
    
    Now fix $U \in \cF_k$. By property iii) of Lemma \ref{lma:L1Cover} there exists a nested chain $\{U_0,\ldots,U_k\}$ between $U_k = U$ and $U_0 = B$.
    Apply Lemma \ref{lma:CoercivityScaled} to each set $U_j$ to obtain $\bspsi_j = \bA_j \bx + \bfb_j \in \cR$ satisfying
    \begin{equation}\label{eq:CoercivityScaled:Korn1st:L1:Pf3}
        \vnorm{\bu - \bspsi_j}_{ L^1(M_0 U_j) } \leq C(n) 2^{jn} \cE_{2^{-j}}(\bu;M_0 U_j), \quad j = 1,\ldots, k.
    \end{equation}
    Then 
    \begin{equation*}
        \begin{split}
            |\bu(\bx)-\bspsi_0(\bx) - (\bu(\by) - \bspsi_0(\by))| &\leq |\bu(\bx) - \bspsi_k(\bx)| + |\bu(\by) - \bspsi_k(\by)| \\
            &\qquad + \sum_{j=0}^{k-1} |\bspsi_{j+1}(\bx-\by) - \bspsi_j(\bx-\by)|.
        \end{split}
    \end{equation*}
    We integrate this inequality over $M_0 U \times M_0 U$. First, since by construction every $U\in \cF$ has a parent $\tilde{U}\in \cF_{k-1}$, by \eqref{eq:AffineMapEquivalence:Rescaled:MatrixPart} and
    \eqref{eq:CoercivityScaled:Korn1st:L1:Pf3}
    \begin{equation*}
        \begin{split}
            \int_{M_0 U} \int_{M_0 U} &\mathds{1}_{ \{ 2^{-k-1} < |\bx-\by| < 2^{-k} \} } |\bspsi_{j+1}(\bx-\by) - \bspsi_j(\bx-\by)| \, \rmd \by \, \rmd \bx \\
            &\leq |\bA_{j+1} - \bA_j| \int_{M_0 U} \, \rmd \bx \int_{\{|\bz| \leq C(n) 2^{-k}\}} |\bz| \, \rmd \bz \\
            &\leq C(n) 2^{-k(2n+1)+j(n+1)} \vnorm{\bspsi_{j+1} - \bspsi_j}_{L^1(B(\wt{\bx}, 2^{-j} /c))} \\
            &\leq C(n) 2^{-k(2n+1)+j(n+1)} \big( \vnorm{\bu - \bspsi_{j+1}}_{L^1(U_{j+1} \cap U_j)} + \vnorm{\bu - \bspsi_j}_{L^1(U_{j+1} \cap U_j)} \big) \\
            &\leq C(n) 2^{(j-k)(2n+1)} \big( \cE_{2^{-j-1}}(\bu; M_0 U_{j+1}) + \cE_{2^{-j}}(\bu; M_0 U_{j}) \big).
        \end{split}
    \end{equation*}
    The integrations of $|\bu(\bx) - \bspsi_k(\bx)|$ and $|\bu(\by) - \bspsi_k(\by)|$ are bounded in the same way using \eqref{eq:CoercivityScaled:Korn1st:L1:Pf3}, with $j = k$. 
    Therefore, for each $U \in \cF_k$ there exists a nested chain $B = U_0, \ldots, U_{k-1}, U_k = U$ such that
    \begin{equation}
        \cD_{2^{-k}}(\bu - \bspsi_0;M_0 U) \leq C(n) \sum_{j=0}^k 2^{(j-k)(2n+1)}\cE_{2^{-j}}(\bu; M_0 U_j).
    \end{equation}
    To sum this over all $U \in \cF_k$, we use property iii) of Lemma \ref{lma:L1Cover}.
    Let $j \in \{ 0,\ldots,k \}$; property v)  states that each set $\wt{U} \in \cF_j$ can appear as a parent in at most $C(n) 2^{n(k-j)}$ nested chains between $B$ and any element of $\cF_k$. Therefore, the number of times the term $\cE_{2^{-j}}(\bu;M_0 \wt{U})$ can appear in the sum is bounded by the same number. Choosing a nested chain for each $U \in \cF_k$, we have
    \begin{equation*}
        \sum_{U \in \cF_k} \cD_{2^{-k}}(\bu - \bspsi_0;M_0 U) \leq C(n) \sum_{j = 0}^k \sum_{\wt{U} \in \cF_j} 2^{n(k-j)} 2^{(j-k)(2n+1)}\cE_{2^{-j}}(\bu; M_0 \wt{U}).
    \end{equation*}   
    Then by finite overlap of the dilated collection, property i) of Lemma \ref{lma:L1Cover}, there exists $C(n)$ such that
    \begin{equation*}
        \sum_{\wt{U} \in \cF_j} \cE_{2^{-j}}(\bu;M_0\wt{U}) \leq C(n) \cE_{2^{-j}}(\bu;B), \qquad \forall j \in \bbN_0.
    \end{equation*}
    The conclusion of the theorem follows from \eqref{eq:CoercivityScaled:Korn1st:L1:Pf2} and the previous two displays.
\end{proof}

For a smooth function $\bu$, divide by $2^{-k(n+1)}$ in \eqref{eq:CoercivityScaled:Korn1st:L1} and take $k \to \infty$. The left-hand side converges to $\vnorm{\bu - \Pi_B[\bu]}_{W^{1,1}(B)}$, while the right-hand side is comparable to $k [e(\bu)]_{L^1(B)}$ for large $k$, and thus becomes unbounded.
Thus in the local regime this estimate remains consistent with the lack of the $L^1$ Korn's inequality. However, the fractional order $s$ of differentiability will ensure that the sum over all dyadic scales in the Gagliardo seminorm remains finite.

\begin{proof}[Proof of Theorem \ref{thm:Korns1stOnBall:L1}]
    The argument is similar to the proof of Theorem \ref{thm:Korns1stOnBall}. There exists $C(n)$ independent of $k$ such that
    \begin{equation*}
        \begin{split}
            \frac{1}{1-s} \vnorm{\bu - \Pi_B[\bu]}_{L^1(B)} &+ [\bu- \Pi_B[\bu]]_{W^{s,1}(B)} \\
            \leq& C(n) \sum_{k=-1}^\infty (2^{-k})^{-n-s} \left( \cD_{2^{-k}}(\bu- \Pi_B[\bu];B) + 2^{-k(n+1)} \vnorm{\bu - \Pi_B[\bu]}_{L^1(B)} \right).
        \end{split}
    \end{equation*}
    Then by Lemma \ref{lma:CoercivityScaled:Korn1st:L1} 
    \begin{equation*}
        \frac{1}{1-s} \vnorm{\bu - \Pi_B[\bu]}_{L^1(B)} + [\bu- \Pi_B[\bu]]_{W^{s,1}(B)} \leq C \sum_{k=-1}^\infty (2^{-k})^{-n-s} \sum_{j=0}^k 2^{(j-k)(n+1)} \cE_{2^{-j}}(\bu;B).
    \end{equation*}
    Using Tonelli's theorem to interchange the order of summation, we conclude
    \begin{equation*}
        \begin{split}
            \frac{1}{1-s} \vnorm{\bu - \Pi_B[\bu]}_{L^1(B)} + [\bu- \Pi_B[\bu]]_{W^{s,1}(B)} 
            &\leq C \sum_{j=-1}^\infty  (2^{j})^{n+s} \sum_{\ell=0}^\infty 2^{-\ell(1-s)} \cE_{2^{-j}}(\bu;B) \\
            &\leq  \frac{C(n) }{1-2^{-(1-s)}}  \sum_{j=-1}^\infty  (2^{j})^{n+s} \cE_{2^{-j}}(\bu;B)  \\
            &\leq \frac{C(n)}{1-s} [\bu]_{\cX^{s,1}(B)}
        \end{split}
    \end{equation*}
where the last inequality follows analogously to the comparable estimate in the proof of Theorem \ref{thm:Korns1stOnBall}. We additionally used the inequality $(1 - s) \le 2 - 2^s \le (2 \ln 2)(1 - s)$ valid for all $ s \in [0, 1].$
\end{proof}

The rescaled estimate (presented here without proof) is 
\begin{theorem}\label{thm:KornBallStrong:L1}
    Let $\bx_0 \in \bbR^n$, $R > 0$, and $B_{R} = B(\bx_0,R)$. There exists $C(n) > 0$ such that for all $\bu \in \cX^{s,1}(B_{R})$    
    \begin{equation}
        R^{-s} \vnorm{\bu - \Pi_{B_{R}}[\bu]}_{L^1(B)} + (1-s)[\bu - \Pi_{B_{R}}[\bu]]_{W^{s,1}(B_{R})} \leq C [\bu]_{\cX^{s,1}(B_{R})}.
    \end{equation}
\end{theorem}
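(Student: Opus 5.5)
The plan is to deduce Theorem \ref{thm:KornBallStrong:L1} from Theorem \ref{thm:Korns1stOnBall:L1} by translation and dilation, tracking how each term scales. Fix $\bu \in \cX^{s,1}(B_R)$ and set $\bv(\bz) := \bu(\bx_0 + R\bz)$ for $\bz \in B = B(0,1)$.

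\emph{Step 1: commute the projection with the rescaling.} By the definition of $\Pi_{B(\bx_0,R)}$,
\begin{equation*}
    \Pi_{B_R}[\bu](\bx_0 + R\bz) = \Pi_B[\bv](\bz).
\end{equation*}
Hence $\bw := \bu - \Pi_{B_R}[\bu]$ satisfies $\bw(\bx_0 + R\bz) = (\bv - \Pi_B[\bv])(\bz)$. No property of $\Pi_B$ beyond this identity is needed.

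\emph{Step 2: scaling of each quantity.} Use the change of variables $\bx = \bx_0 + R\bz$, $\by = \bx_0 + R\bz'$. Then $\rmd\bx\,\rmd\by = R^{2n}\,\rmd\bz\,\rmd\bz'$ and $|\bx-\by|^{n+s} = R^{n+s}|\bz-\bz'|^{n+s}$. The unit vector $\frac{\bx-\by}{|\bx-\by|}$ equals $\frac{\bz-\bz'}{|\bz-\bz'|}$, so the projected difference is unchanged. This gives
\begin{equation*}
    \vnorm{\bw}_{L^1(B_R)} = R^n \vnorm{\bv - \Pi_B[\bv]}_{L^1(B)},
\end{equation*}
\begin{equation*}
    [\bw]_{W^{s,1}(B_R)} = R^{n-s}[\bv - \Pi_B[\bv]]_{W^{s,1}(B)},
\end{equation*}
\begin{equation*}
    [\bu]_{\cX^{s,1}(B_R)} = R^{n-s}[\bv]_{\cX^{s,1}(B)}.
\end{equation*}
In particular $\bv \in \cX^{s,1}(B)$.

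\emph{Step 3: apply the unit-ball estimate.} Theorem \ref{thm:Korns1stOnBall:L1} applied to $\bv$, then multiplied by $R^{n-s}$, yields
\begin{equation*}
    R^{-s}\vnorm{\bw}_{L^1(B_R)} + (1-s)[\bw]_{W^{s,1}(B_R)} \le C(n)\,[\bu]_{\cX^{s,1}(B_R)}.
\end{equation*}
The constant is the same $C(n)$ as in Theorem \ref{thm:Korns1stOnBall:L1}, independent of $R$, $\bx_0$ and $s$.

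The $L^1$ norm on the left should be over $B_R$; the "$L^1(B)$" in the displayed statement is read as a typo for $L^1(B_R)$.

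There is no real obstacle here. The only point needing care is confirming that $\Pi_{B_R}$ is defined exactly by conjugation with the affine map $\bz \mapsto \bx_0 + R\bz$, so that Step 1 is an identity rather than an estimate. The definition preceding Lemma \ref{lem:ProjectionBall} states precisely this.
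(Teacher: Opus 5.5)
Your proposal is correct and is the same argument the paper intends: it states this theorem ``without proof'' as the rescaled form of Theorem~\ref{thm:Korns1stOnBall:L1}, and your identity $\Pi_{B_R}[\bu](\bx_0+R\bz)=\Pi_B[\bv](\bz)$, together with the scaling factors $R^n$, $R^{n-s}$, $R^{n-s}$ and a multiplication by $R^{n-s}$, gives exactly the stated estimate. You are also right that the $L^1(B)$ on the left-hand side should read $L^1(B_R)$.
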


\subsection{The case \texorpdfstring{$p = \infty$}{p=infinity}}\label{subsec:infty}

In this section, we prove a version of Korn's second inequality for H\"older spaces. We write $C^s(\Omega;\bbR^n) = W^{s,\infty}(\Omega;\bbR^n)$. 
We also write $\cK^s(\Omega) = \cX^{s,\infty}(\Omega)$ and $\cK^s(\Omega)$ is the space of continuous functions such that the norm $\vnorm{\bu}_{\cK^s(\Omega)} := \vnorm{\bu}_{L^\infty(\Omega)} + [\bu]_{\cK^s(\Omega)}$ is finite, where
\begin{equation*}
    [\bu]_{\cK^s(\Omega)} := \sup_{\substack{\bx,\by \in \overline{\Omega} \\ \bx \neq \by }} \frac{ \left| (\bu(\bx) - \bu(\by)) \cdot (\bx-\by) \right| }{ |\bx-\by|^{s+1} }.
\end{equation*}

\begin{lemma}\label{lma:Unscaled:Infty}
    Let $G \subset \bbR^n$ satisfy $B(\bx_0,r/2) \subset G \subset B(\bx_0,2r)$ for some $\bx_0 \in \bbR^n$, $0 < r < 1$.
    Then there exists $C = C(n) > 0$ such that the following holds:
    For all $\bu \in \cK^s(G)$ there exists $\bspsi \in \cR$ such that
    \begin{equation*}
        \vnorm{\bu - \bspsi}_{L^\infty(G)} \leq C r^s [\bu]_{\cK^s(G)}.
    \end{equation*}
\end{lemma}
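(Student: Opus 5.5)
The plan is to reduce to unit scale and then repeat, quantitatively, the pointwise algebra from the continuous case of Lemma \ref{lma:ZeroSet}. The sup-norm seminorm $[\cdot]_{\cK^s}$ controls every pair of points, including nearby ones, so no compactness or covering is needed.

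\emph{Step 1: scaling.} Put $\bv(\bz) := \bu(\bx_0 + r\bz)$ on $G' := (G-\bx_0)/r$, so that $B(0,1/2) \subset G' \subset B(0,2)$. With $\bx = \bx_0 + r\bz$ and $\by = \bx_0 + r\bw$,
\begin{equation*}
    \frac{(\bv(\bz)-\bv(\bw))\cdot(\bz-\bw)}{|\bz-\bw|^{1+s}} = r^{s}\, \frac{(\bu(\bx)-\bu(\by))\cdot(\bx-\by)}{|\bx-\by|^{1+s}} .
\end{equation*}
Hence $[\bv]_{\cK^s(G')} = r^s [\bu]_{\cK^s(G)}$. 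Rigid motions are preserved under this affine change of variables, and so are sup norms. It therefore suffices to prove the following: if $B(0,1/2)\subset G \subset B(0,2)$, then $\vnorm{\bu-\bspsi}_{L^\infty(G)} \leq C(n) K$ for some $\bspsi\in\cR$, where $K := [\bu]_{\cK^s(G)}$.

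\emph{Step 2: affine approximation.} Fix $\delta = 1/4$, so that $0$ and $\delta\be_i$ lie in $G$. For every $\bx \in G$ we have $|\bx - \bz| \leq 3$ for $\bz \in \{0, \delta\be_1, \ldots, \delta\be_n\}$. The definition of $K$ then gives
\begin{equation*}
    |(\bu(\bx)-\bu(0))\cdot \bx| \leq 3^{1+s}K, \qquad |(\bu(\bx)-\bu(\delta\be_i))\cdot(\bx-\delta\be_i)| \leq 3^{1+s}K .
\end{equation*}
Subtracting the first from the second, exactly as in the derivation of \eqref{eq:coord-linear}, and dividing by $\delta$ yields
\begin{equation*}
    \Big| \bu(\bx)\cdot\be_i - \tfrac{1}{\delta}\big(\bu(\delta\be_i)-\bu(0)\big)\cdot(\bx-\delta\be_i) - \bu(0)\cdot\be_i \Big| \leq C K .
\end{equation*}
This holds for all $\bx\in G$ and all $i$. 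Since $\bu$ is continuous, these are genuine pointwise values. Define the matrix $\bA$ by $\bA_{ij} := \tfrac1\delta(\bu(\delta\be_i)-\bu(0))_j$, and let $\bfb$ collect the constant terms. Then $\bw := \bu - (\bA\bx+\bfb)$ satisfies $\vnorm{\bw}_{L^\infty(G)} \leq C(n)K$.

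\emph{Step 3: removing the symmetric part.} Since $(\bA\bz)\cdot\bz = (\mathrm{sym}\,\bA\,\bz)\cdot\bz$, for $\bx,\by \in G$ we have
\begin{equation*}
    (\mathrm{sym}\,\bA\,(\bx-\by))\cdot(\bx-\by) = (\bu(\bx)-\bu(\by))\cdot(\bx-\by) - (\bw(\bx)-\bw(\by))\cdot(\bx-\by) .
\end{equation*}
The right-hand side is at most $CK$ in absolute value. Take $\by=0$ and $\bx$ ranging over the sphere $|\bx| = 1/4$, which lies in $G$. This bounds the quadratic form of the symmetric matrix $\mathrm{sym}\,\bA$ on unit vectors, so $|\mathrm{sym}\,\bA| \leq C(n)K$. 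Setting $\bspsi(\bx) := \tfrac{\bA-\bA^\top}{2}\bx + \bfb \in \cR$, we get
\begin{equation*}
    |\bu - \bspsi| \leq |\bw| + |\mathrm{sym}\,\bA\,\bx| \leq C(n)K \quad \text{on } G \subset B(0,2) .
\end{equation*}
Rescaling back gives the claim.

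I expect no serious obstacle. The only points needing care are the following: the base points $0$ and $\delta\be_i$ must lie in $G$, which is guaranteed by $B(0,1/2)\subset G$; all distances must be bounded by $3$, so that the factor $|\bx-\by|^{1+s}$ is uniformly bounded independently of $s$; and the constant must not depend on $s \in (0,1)$, which holds because $3^{1+s}\leq 9$.
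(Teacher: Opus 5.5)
Your proof is correct and is essentially the paper's argument. Both proofs subtract the near-orthogonality conditions at the nodes $\bx_0$ and $\bx_0+\frac r4\be_i$. The paper fixes the skew part $\bF_{skw}$ of the interpolating gradient first and bounds $\bF_{sym}$ through its diagonal entries plus polarization at the nodes. You instead first produce an arbitrary affine approximant and then bound its symmetric part through the quadratic form on $|\bx|=1/4$.

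There is one sign slip in Step 2. The subtraction actually gives $\bu(\bx)\cdot\be_i=\bu(0)\cdot\be_i-\frac1\delta(\bu(\delta\be_i)-\bu(0))\cdot(\bx-\delta\be_i)+O(K)$; your display already fails for $\bu(\bx)=\mathbf{W}\bx$ with $\mathbf{W}$ skew. So the correct choice is $\bA_{ij}=-\frac1\delta(\bu(\delta\be_i)-\bu(0))_j$, whose skew part is exactly the paper's $\bF_{skw}$ after rescaling. Since Step 3 only uses that some affine map lies within $C(n)K$ of $\bu$, nothing else is affected.
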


\begin{proof}
    Set $M = [\bu]_{\cK^s(G)}$.
    Define the points $\bx_i = \bx_0 + \frac{r}{4}\be_i$, $i = 1,\ldots,n$, and define the affine vector field $\bsvarphi$ interpolating $\bu$ at these points:
    \begin{equation*}
        \bsvarphi(\bx) = \bu(\bx_0) + \bF(\bx-\bx_0), \quad \text{where $\bF = (\bF_{\cdot 1}| \bF_{\cdot 2}| \cdots| \bF_{\cdot n})$ and } \bF_{\cdot j} = \frac{4}{r}(\bu(\bx_j) - \bu(\bx_0)). 
    \end{equation*}
    Then $\bsvarphi(\bx_i) = \bu(\bx_i)$ for $i = 0,1,\ldots,n$.

    Now define $\bspsi(\bx) = \bu(\bx_0) + \bF_{skw}(\bx-\bx_0) = \bsvarphi(\bx) - \bF_{sym}(\bx-\bx_0)$, where $\bF_{skw} = \frac{\bF - \bF^T}{2}$ and $\bF_{sym} = \frac{\bF + \bF^T}{2}$. To prove the theorem it suffices to show that the function $\bv(\bx) := \bu(\bx) - \bspsi(\bx)$ satisfies
    \begin{equation}\label{eq:LInftyKorn:Pf1}
        \sup_{\bx \in \overline{G}} |\bv(\bx) \cdot \be_i| \leq C(n) M r^s, \qquad i = 1,\ldots,n.
    \end{equation}
    To this end, for $\bx\in \overline{G}$, $|\bx-\bx_0|\leq 2r$ and $|\bx -\bx_i |\leq 9r/4.$ We also have 
    \begin{equation*}
        \begin{split}
            \bv(\bx) \cdot \be_i = \frac{4}{r} \bv(\bx) \cdot (\bx_i - \bx_0) &=  \frac{4}{r} \big( \bv(\bx) \cdot (\bx - \bx_0) - \bv(\bx) \cdot (\bx - \bx_i) \big) \\
            &= \frac{4}{r} \big( (\bv(\bx) -\bv(\bx_0)) \cdot (\bx - \bx_0) - (\bv(\bx) - \bv(\bx_i)) \cdot (\bx - \bx_i) \big) \\
            &\qquad + \frac{4}{r} \big(  \bv(\bx_0) \cdot (\bx - \bx_0) - \bv(\bx_i) \cdot (\bx-\bx_i) \big).
        \end{split}
    \end{equation*}
    Since $\bspsi \in \cR$, $(\bv(\bx)-\bv(\by)) \cdot (\bx-\by) = (\bu(\bx) - \bu(\by)) \cdot (\bx-\by) $, and so the first term on the right-hand side is bounded by $4 M r^s$ by assumption, uniformly in $\bx$. To bound the second term, we use that $|\bx-\bx_i| \leq 9r/4$ for all $\bx \in \overline{G}$, $i = 0,1,\ldots,n$:
    \begin{equation*}
        \max_{i=1,\ldots,n} \left| \frac{4}{r} \big(  \bv(\bx_0) \cdot (\bx - \bx_0) - \bv(\bx_i) \cdot (\bx-\bx_i) \big) \right| \leq C(n) \max_{i=1,\ldots,n} |\bv(\bx_i)|.
    \end{equation*}
    Therefore in order to prove \eqref{eq:LInftyKorn:Pf1} we need to show that
    \begin{equation}\label{eq:LInftyKorn:Pf2}
        |\bv(\bx_i)| \leq C(n) M r^{s}, \qquad i = 0,1,\ldots,n.
    \end{equation}
    
    Now, we use the fact that $\bsvarphi$ was chosen via interpolation and that  $$\bv(\bx_i) = \bu(\bx_i) - \bspsi(\bx_i) = \bu(\bx_i) - \bsvarphi(\bx_i) + \bF_{sym}(\bx_i-\bx_0) = \bF_{sym}(\bx_i-\bx_0). $$  Since $|\bx_i - \bx_0| \leq r/4$ for all $i$, \eqref{eq:LInftyKorn:Pf2} will be established if we show that
    \begin{equation}\label{eq:LInftyKorn:Pf3}
        |\bF_{sym}| \leq C(n) M r^{s-1}.
    \end{equation}
    But we have that
    \begin{equation*}
        (\bF_{sym})_{ii} = \frac{16}{r^2} (\bF (\bx_i - \bx_0)) \cdot (\bx_i-\bx_0) = \frac{16}{r^2} (\bu(\bx_i) - \bu(\bx_0)) \cdot (\bx_i - \bx_0),
    \end{equation*}
    and so $|(\bF_{sym})_{ii}| \leq \frac{C}{r^2} M r^{1+s} = C(n) M r^{s-1}$. To control the off-diagonal entries, we use this estimate along with the identity
    \begin{equation*}
    \begin{split}
        (\bF_{sym})_{ij} &= \frac{16}{r^2} (\bF_{sym} (\bx_i - \bx_0)) \cdot (\bx_j-\bx_0) \\
        &= \frac{8}{r^2} 
        \big( (\bF (\bx_i - \bx_0)) \cdot (\bx_i-\bx_0)
        + (\bF (\bx_j - \bx_0)) \cdot (\bx_j-\bx_0)
        - (\bF (\bx_i - \bx_j)) \cdot (\bx_i-\bx_j) \big)
    \end{split}
    \end{equation*}
    to conclude that \eqref{eq:LInftyKorn:Pf3}, and hence the lemma, holds.
\end{proof}

\begin{theorem}\label{thm:Scaled:Infty}
    There exists a constant $C(n) > 0$ such that for all $0 < s < 1$ the following holds: for all $\bu \in \cK^s(B)$ there exists $\bspsi \in \cR$ such that
    \begin{equation*}
        \vnorm{\bu- \bspsi}_{L^\infty(B)} + (1-s)[\bu - \bspsi]_{C^s(B)} \leq C [\bu]_{\cK^s(B)}.
    \end{equation*}
\end{theorem}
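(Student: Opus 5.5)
Fix $\bu \in \cK^s(B)$ and write $M = [\bu]_{\cK^s(B)}$. For each point $\bz \in \overline{B}$ and dyadic scale $r = 2^{-k}$, $k \in \bbN_0$, we use the sets $G_{\bz,r} := B \cap B(\bz, r)$. Each such set contains a ball $B((1-r/2)\bz, r/2)$ and is contained in $B(\bz, r)$, so after adjusting constants Lemma~\ref{lma:Unscaled:Infty} applies to it. This produces a rigid motion $\bspsi_{\bz,k} = \bA_{\bz,k}\bx + \bfb_{\bz,k}$ with
\begin{equation*}
\vnorm{\bu - \bspsi_{\bz,k}}_{L^\infty(G_{\bz,2^{-k}})} \leq C(n)\, 2^{-ks} M,
\end{equation*}
since $[\bu]_{\cK^s(G)} \leq M$. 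We take $\bspsi := \bspsi_{0,0}$, the rigid motion obtained on $G = B$ itself. This immediately gives the $L^\infty$ part of the estimate, $\vnorm{\bu - \bspsi}_{L^\infty(B)} \leq CM$. It remains to bound $[\bu - \bspsi]_{C^s(B)}$ by $C M/(1-s)$.

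\textbf{Step 1: telescoping over a chain.} Take $\bx \neq \by$ in $B$ and choose $k$ with $2^{-k-1} < |\bx - \by| \leq 2^{-k}$; the case $k = -1$ is handled by the $L^\infty$ bound. Consider the nested chain of sets $G_j := G_{\bx, 2^{-j}}$ for $j = 0, \ldots, k$, and note that both $\bx$ and $\by$ lie in $G_{\bx, 2^{-k+1}}$. We write
\begin{equation*}
(\bu-\bspsi)(\bx)-(\bu-\bspsi)(\by) = \big[(\bu - \bspsi_k)(\bx) - (\bu - \bspsi_k)(\by)\big] + \sum_{j=0}^{k-1} (\bA_{j+1} - \bA_j)(\bx - \by) + (\bA_k - \bA)(\bx-\by) - \ldots,
\end{equation*}
that is, we telescope the matrix parts along the chain. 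The first bracket is bounded by $2C\,2^{-ks}M \leq C|\bx-\by|^s M$.

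\textbf{Step 2: comparing neighbouring matrices.} For consecutive scales, $G_{j+1} \subset G_j$ and $G_{j+1}$ contains a ball of radius comparable to $2^{-j}$. Hence
\begin{equation*}
\vnorm{\bspsi_{j+1} - \bspsi_j}_{L^\infty(G_{j+1})} \leq C\, 2^{-js} M .
\end{equation*}
By \eqref{eq:AffineMapEquivalence:Rescaled:MatrixPart} with $p = \infty$ this gives $|\bA_{j+1} - \bA_j| \leq C\, 2^{j(1-s)} M$. Summing over $j$,
\begin{equation*}
\sum_{j=0}^{k-1} |\bA_{j+1} - \bA_j|\, |\bx - \by| \leq C M\, 2^{-k} \sum_{j \leq k} 2^{j(1-s)} \leq \frac{C M}{1 - 2^{-(1-s)}}\, 2^{-ks} \leq \frac{C M}{1-s}\, |\bx - \by|^s .
\end{equation*}
Here we use $1 - 2^{-(1-s)} \geq c(1-s)$. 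Dividing by $|\bx-\by|^s$ and taking the supremum gives the claimed seminorm bound with the factor $(1-s)^{-1}$.

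\textbf{Main obstacle.} The delicate point is the geometric bookkeeping. The sets $G_{\bx,r}$ near $\partial B$ must satisfy the hypotheses of Lemma~\ref{lma:Unscaled:Infty}, namely the inner-ball condition, which holds via the point $(1-r)\bx$ exactly as in Lemma~\ref{lma:CoercivityScaled}. The comparison ball used for $|\bA_{j+1} - \bA_j|$ must have radius comparable to $2^{-j}$ uniformly. Also, the $\bspsi_{\bx,j}$ depend on $\bx$, but this is harmless since the chain is built per pair $(\bx,\by)$. Finally, Lemma~\ref{lma:Unscaled:Infty} as stated requires $r < 1$, so the top scale $G = B$ needs $r$ slightly below $1$ or a trivial modification of constants.
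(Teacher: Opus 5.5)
Your proposal is correct and follows essentially the same route as the paper. Both arguments feed the dyadic sets $B\cap B(\bx_0,2^{-k})$ into Lemma~\ref{lma:Unscaled:Infty}, compare consecutive matrix parts via \eqref{eq:AffineMapEquivalence:Rescaled:MatrixPart} to get $|\bA_{j+1}-\bA_j|\lesssim 2^{j(1-s)}M$, and sum the geometric series to obtain the factor $(1-2^{s-1})^{-1}\sim(1-s)^{-1}$. The only differences are cosmetic, and they concern bookkeeping. The paper centers the chain at the midpoint $(\bx+\by)/2$, so both points lie in the scale-$2^{-k}$ set. With your centering at $\bx$, you should use $\bspsi_{k-1}$ rather than $\bspsi_k$ in the first bracket. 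You should also write the telescoping as $\bA_k-\bA=\sum_{j=0}^{k-1}(\bA_{j+1}-\bA_j)+(\bA_0-\bA)$, without the extra double-counted term.
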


\begin{proof}
Using Lemma \ref{lma:Unscaled:Infty}, let $\bspsi = \bA \bx + \bfb \in \cR$  such that $\vnorm{\bu - \bspsi}_{L^\infty(B)} \leq C(n) [\bu]_{\cK^s(B)}$, corresponding to $\bx_0 = 0$ and $r = 1$. 
For $\bx_0 \in \overline{B}$ and $r \leq 1$, define $B_{\bx_0,r} = B \cap B(\bx_0,r)$.
    Then
    \begin{equation*}
        B((1-r/2)\bx_0,r/2) \subset B_{\bx_0,r} \subset B((1-r/2)\bx_0,2r).
    \end{equation*}
    Thus we can apply Lemma \ref{lma:Unscaled:Infty} to obtain an affine vector field $\bspsi_{\bx_0,r} \in \cR$ satisfying
    \begin{equation*}
        \vnorm{\bu - \bspsi_{\bx_0,r}}_{L^\infty(B_{\bx_0,r})} \leq C(n) [\bu]_{\cK^s(B_{\bx_0,r})} r^s.
    \end{equation*}
    define $B_k = B_{\bx_0,2^{-k}}$, and define $\bspsi_k$ via $\bspsi_0 = \bspsi$ and $\bspsi_{k} = \bspsi_{\bx_0,2^{-k}}$ for $k \geq 1$. We write $\bspsi_k(\bx) = \bA_k \bx + \bfb_k$, where $\bA_k \in \bbR^{n \times n}$ and $\bfb_k \in \bbR^n$.
    Then by the triangle inequality
    \begin{equation*}
        \vnorm{\bspsi_{k} - \bspsi_{k+1}}_{L^\infty(B_{k+1})} \leq \vnorm{\bu - \bspsi_{k+1}}_{L^\infty(B_{k+1})} + \vnorm{\bu - \bspsi_{k}}_{L^\infty(B_k)}\leq C(n) [\bu]_{\cK^s(B)} 2^{-ks},
    \end{equation*}
    for $k \in \bbN_0$.
    Moreover, \eqref{eq:AffineMapEquivalence:Rescaled:MatrixPart} implies that
    \begin{equation*}
        |\bA_k - \bA_{k+1}| \leq 2^{k} C \vnorm{\bspsi_{k} - \bspsi_{k+1}}_{L^\infty(B_{k+1})} \leq C [\bu]_{\cK^s(B)} 2^{-k(s-1)},
    \end{equation*}
    for $k \in \bbN_0$. Therefore, by the triangle inequality, for any $k \in \bbN$ 
    \begin{equation*}
        |\bA - \bA_k| \leq C(n) [\bu]_{\cK^s(B)} \sum_{j=0}^k 2^{-j(s-1)} \leq \frac{C(n) }{1 - 2^{s-1}} [\bu]_{\cK^s(B)} 2^{-k(s-1)}.
    \end{equation*}
    
 We are now ready to estimate the H\"older seminorm. Let $\bx$, $\by \in B$ with $|\bx-\by| := h$. Then $h \in (0,2)$. If $h \geq 1$ then $$|\bu(\bx)-\bu(\by)-\bA(\bx-\by)| \leq 2 \vnorm{\bu - \bspsi}_{L^\infty(B)} \leq C [\bu]_{\cK^s(B)},$$ so we may assume that $h < 1$. Then there exists $k \in \bbN$ such that $2^{-k-1} < h \leq 2^{-k}$. 
 
 Now set $\bx_0 = \frac{\bx+\by}{2}$. Applying the above procedure for this $\bx_0,$ it follows that there exists $\bspsi_k$ such that, since $\bx$, $\by \in B_{\bx_0,2^{-k+1}}$,  
    \begin{equation*}
    \begin{split}
        |\bu(\bx) - \bu(\by) - \bA(\bx-\by)| &\leq |\bu(\bx) - \bspsi_k(\bx)| + |\bu(\by) - \bspsi_k(\by)| + |\bA_k - \bA| h \\
        &\leq C(n) [\bu]_{\cK^s(B)} \left( 2^{-ks} + \frac{2^{-k(s-1)}}{ 1 - 2^{s-1} }h \right)\\
        &\leq \frac{C(n)}{1-s} [\bu]_{\cK^s(B)}  h^s.
    \end{split}
    \end{equation*}
    So $\frac{|\bu(\bx) - \bu(\by) - \bA(\bx-\by)|}{|\bx-\by|^s} \leq \frac{C(n)}{1-s} [\bu]_{\cK^s(B)}$, as desired.
\end{proof}

\begin{theorem}\label{thm:Korns1stOnBall:Linfty}
    There exists $C = C(n) > 0$ such that for all $\bu \in \cK^{s}(B)$
    \begin{equation}
        \vnorm{\bu - \Pi_B[\bu]}_{L^\infty(B)} + (1-s)[\bu - \Pi_B[\bu]]_{C^{s}(B)} \leq C [\bu]_{\cK^{s}(B)}.
    \end{equation}
\end{theorem}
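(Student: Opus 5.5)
The plan is to deduce the statement from Theorem \ref{thm:Scaled:Infty} by replacing the rigid motion $\bspsi$ found there with $\Pi_B[\bu]$. The argument follows the proof of Corollary \ref{cor:CoercivityScaled:Korn1st}: it combines the $L^\infty$-boundedness \eqref{eq:ProjContLp} of $\Pi_B$ with the fact that $\Pi_B$ fixes $\cR$. Throughout, $\bu \in \cK^s(B)$ is continuous on $\overline{B}$, so $\bu \in L^\infty(B) \subset L^1(B)$ and $\Pi_B[\bu]$ is defined.

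\textbf{Step 1 ($L^\infty$ bound).} Let $\bspsi \in \cR$ be given by Theorem \ref{thm:Scaled:Infty}. Since $\Pi_B[\bspsi] = \bspsi$ and $\Pi_B$ is linear, we have $\Pi_B[\bu] - \bspsi = \Pi_B[\bu - \bspsi]$. Hence, by \eqref{eq:ProjContLp} with $p = \infty$,
\begin{equation*}
\vnorm{\bu - \Pi_B[\bu]}_{L^\infty(B)} \leq \vnorm{\bu - \bspsi}_{L^\infty(B)} + \vnorm{\Pi_B[\bu-\bspsi]}_{L^\infty(B)} \leq (1 + C(n)) \vnorm{\bu - \bspsi}_{L^\infty(B)} \leq C [\bu]_{\cK^s(B)}.
\end{equation*}

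\textbf{Step 2 (H\"older seminorm).} Write $\Pi_B[\bu - \bspsi](\bx) = \bA'\bx + \bfb'$. By Lemma \ref{lem:ProjectionBall}, the equivalence of norms on affine maps, and Step 1, we get $|\bA'| \leq C(n) \vnorm{\bu - \bspsi}_{L^\infty(B)} \leq C [\bu]_{\cK^s(B)}$. By the upper bound in \eqref{eq:NormCompEquiv:Infty}, $[\Pi_B[\bu-\bspsi]]_{C^s(B)} \leq 2|\bA'|$. Therefore
\begin{equation*}
(1-s)[\bu - \Pi_B[\bu]]_{C^s(B)} \leq (1-s)[\bu-\bspsi]_{C^s(B)} + 2(1-s)|\bA'| \leq C [\bu]_{\cK^s(B)},
\end{equation*}
using $1 - s \leq 1$. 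Adding this to Step 1 gives the theorem.

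\textbf{Expected obstacle.} No step here is hard, since the real work is already done in Theorem \ref{thm:Scaled:Infty}. The only point needing care is the bound $|\bA'| \lesssim \vnorm{\bu-\bspsi}_{L^\infty}$, which must use the $L^1$ (hence $L^\infty$) continuity of the explicit projection rather than any minimization-based projection; an affine map on $B$ is controlled in $\vnorm{\cdot}_{\mathrm{aff}}$ by its sup norm, so this holds.
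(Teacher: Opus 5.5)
Your proposal is correct and follows essentially the same route as the paper. Both arguments take $\bspsi$ from Theorem \ref{thm:Scaled:Infty} and write $\Pi_B[\bu]-\bspsi=\Pi_B[\bu-\bspsi]$. The sup-norm term is then handled with the $L^\infty$-boundedness \eqref{eq:ProjContLp}, and the H\"older seminorm of the affine correction is bounded through its matrix part, which is controlled by $\vnorm{\bu-\bspsi}_{L^\infty(B)}$.
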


\begin{proof}
    Let $\bspsi \in \cR$ be as in the previous theorem. Then by \eqref{eq:ProjContLp}
    \begin{equation*}
    \begin{split}
        \vnorm{\bu - \Pi_B[\bu]}_{L^\infty(B)} &\leq \vnorm{\bu - \bspsi}_{L^\infty(B)} + \vnorm{\Pi_B[\bu] - \bspsi}_{L^\infty(B)}
        \leq (1 + C(n))\vnorm{\bu - \bspsi}_{L^\infty(B)},
    \end{split}
    \end{equation*}
    and by \eqref{eq:AffineMapEquivalence:Rescaled}
    \begin{equation*}
    \begin{split}
        [\bu - \Pi_B[\bu]]_{C^s(B)} &\leq [\bu - \bspsi]_{C^s(B)} + [\Pi_B[\bu]-\bspsi]_{C^s(B)} \\
        &= [\bu - \bspsi]_{C^s(B)} + [\Pi_B[\bu-\bspsi]]_{C^s(B)} \\
        &\leq C(n) \left( [\bu - \bspsi]_{C^s(B)} + \vnorm{\bu - \bspsi}_{L^\infty(B)} \right).
    \end{split}
    \end{equation*}
    Therefore,
    \begin{equation*}
        \vnorm{\bu - \Pi_B[\bu]}_{L^\infty(B)} + (1-s)[\bu - \Pi_B[\bu]]_{C^{s}(B)} \leq C(n,p) \left( \vnorm{\bu - \bspsi}_{L^\infty(B)} + (1-s)[\bu - \bspsi]_{C^{s}(B)}  \right),
    \end{equation*}
    and the result follows from the previous lemma.
\end{proof}
And, finally we present a rescaled estimate as 
\begin{theorem}\label{thm:KornBallStrong:Linfty}
    Let $\bx_0 \in \bbR^n$, $R > 0$ and $B_R = B(\bx_0,R)$. There exists $C = C(n) > 0$ such that for all $\bu \in \cK^s(B)$    
    \begin{equation}
        R^{-s} \vnorm{\bu - \Pi_{B_R}[\bu]}_{L^\infty(B_{R})} + (1-s)[\bu - \Pi_{B_{R}}[\bu]]_{C^s(B_{R})} \leq C [\bu]_{\cK^{s}(B_{R})}.
    \end{equation}
\end{theorem}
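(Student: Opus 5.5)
This follows from Theorem \ref{thm:Korns1stOnBall:Linfty} by scaling, plus one check: the projection commutes with the affine change of variables.

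Given $\bu \in \cK^s(B_R)$, set $\bv(\bz) := \bu(\bx_0 + R\bz)$ for $\bz \in B = B(0,1)$. By the definition of $\Pi_{B_R}$,
\[
\Pi_{B_R}[\bu](\bx) = \Pi_B[\bv]\big((\bx-\bx_0)/R\big).
\]
Hence $\bw := \bu - \Pi_{B_R}[\bu]$ satisfies $\bw(\bx_0 + R\bz) = \bv(\bz) - \Pi_B[\bv](\bz)$. Then:
\begin{itemize}
\item $\vnorm{\bw}_{L^\infty(B_R)} = \vnorm{\bv - \Pi_B[\bv]}_{L^\infty(B)}$.
\item Writing $\bx = \bx_0 + R\bz$, $\by = \bx_0 + R\bz'$ gives $|\bx - \by|^s = R^s|\bz - \bz'|^s$, so $[\bw]_{C^s(B_R)} = R^{-s}[\bv - \Pi_B[\bv]]_{C^s(B)}$.
\item For the symmetrized seminorm, $(\bu(\bx) - \bu(\by)) \cdot (\bx - \by) = R\,(\bv(\bz) - \bv(\bz')) \cdot (\bz - \bz')$ and $|\bx - \by|^{s+1} = R^{s+1}|\bz - \bz'|^{s+1}$. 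Therefore $[\bu]_{\cK^s(B_R)} = R^{-s}[\bv]_{\cK^s(B)}$.
\end{itemize}

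Applying Theorem \ref{thm:Korns1stOnBall:Linfty} to $\bv$ gives
\[
\vnorm{\bv - \Pi_B[\bv]}_{L^\infty(B)} + (1-s)[\bv - \Pi_B[\bv]]_{C^s(B)} \leq C(n)\,[\bv]_{\cK^s(B)} = C(n)\,R^s [\bu]_{\cK^s(B_R)}.
\]
Substituting the first two identities, the left side equals
\[
\vnorm{\bw}_{L^\infty(B_R)} + (1-s)R^s[\bw]_{C^s(B_R)}.
\]
Dividing by $R^s$ yields the stated estimate, with the same constant $C(n)$ and no dependence on $R$ or $\bx_0$.

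The only point to check is that the scaling is consistent: the weights $R^{-s}$ on the $L^\infty$ term and $R^0$ on the Hölder term match the dilation exponents. This holds because the projected difference quotient carries one extra power of length in both numerator and denominator, so it scales like the Hölder seminorm. We also read the hypothesis $\bu \in \cK^s(B)$ in the statement as $\bu \in \cK^s(B_R)$.
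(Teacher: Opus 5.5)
Your proof is correct and is exactly the intended argument: the paper states this theorem without proof as the rescaled form of Theorem \ref{thm:Korns1stOnBall:Linfty}. The identity $\Pi_{B_R}[\bu](\bx_0+R\bz)=\Pi_B[\bv](\bz)$ holds by the very definition of $\Pi_{B(\bx_0,R)}$, and your scaling identities (the $L^\infty$ norm is invariant, while the $C^s$ and $\cK^s$ seminorms both scale like $R^{-s}$) give the claim with the same constant $C(n)$; your reading of the hypothesis as $\bu\in\cK^s(B_R)$ is the right correction of the typo.
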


\section{Fractional Korn's Inequality on Bounded Lipschitz Domains}\label{sec:lipschitz}

To extend the fractional Korn inequality from balls to general bounded Lipschitz domains, we first examine bi-Lipschitz images of Euclidean balls. The following is a type of Whitney covering of the unit ball using Euclidean balls instead of cubes; see \cite{Kohn1982}.

\begin{proposition}\label{prop:WhitneyBallCovering}
    For a constant $M_0 \geq 1$ and $j \in \mathbb{N}_0$, set
    \begin{equation}
        \mu := \frac{1}{4M_0}, \quad R_j := 1 - (1-\mu)^j, \quad r_j := 2(R_{j+1} - R_j).
    \end{equation}
    Then there exist collections $\mathcal{F}_j = \{ B(\bx, r_j) : \bx \in S_j \}$ centered on concentric spheres $S_j \subset \partial B(0, R_j)$ such that:
    \begin{enumerate}
        \item[\textbf{(i)}] $\cF_0 = \{ B(0,r_0)\}$, $\cF_j$ is an open cover of $B(0,\frac{R_{j+1}+R_j}{2}) \setminus B(0,\frac{R_{j}+R_{j-1}}{2})$, and $\bigcup_{j=0}^\infty \mathcal{F}_j$ is an open cover of $B(0,1)$. 
        \item[\textbf{(ii)}] We say that $B \in \cF_j$ and $B' \in \cF_{j+1}$ are \textit{connected} if their intersection contains a ball of radius $\frac{r_{j+1}}{2}$. For every $B \in \cF_j$ there exists $B' \in \cF_{j+1}$ such that $B$ and $B'$ are connected.
        \item[\textbf{(iii)}] For $j < k$, a sequence $\{B_j,B_{j+1},\ldots,B_k\}$ is a \textit{chain} from $B \in \cF_j$ to $B' \in \cF_k$ if $B_j = B$, $B_k = B'$, $B_\ell \in \cF_\ell$ for $\ell = j, j+1, \ldots, k$, and $B_\ell$ is connected to $B_{\ell + 1}$ for $\ell = j, \ldots, k-1$. The number of chains between $\mathcal{F}_j$ and $\mathcal{F}_k$ is at most $N_1 M_0^{n-1} (r_j/r_k)^{n-1}$, where $N_1 = N_1(n) > 0$.
        \item[\textbf{(iv)}] The dilated collection $\{ M_0 B : B \in \bigcup_{j=0}^\infty \mathcal{F}_j \}$ remain a locally finite cover; any $\bx \in B(0,1)$ is contained in at most $N_2 M_0^n$ of the sets in said collection, where $N_2 = N_2(n) > 0$.
    \end{enumerate}
\end{proposition}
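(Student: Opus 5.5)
Throughout, write $\rho_j := \frac{R_{j+1}+R_j}{2}$ (with $\rho_{-1}:=0$). We first record the elementary geometry of the radii: $R_{j+1}-R_j = \mu(1-\mu)^j = \mu(1-R_j)$, hence $r_j = 2\mu(1-R_j)$ and $r_{j+1} = (1-\mu) r_j$. Consecutive radii are therefore comparable, and $r_j$ is comparable to the distance from $\partial B(0,R_j)$ to $\partial B(0,1)$; precisely, $\mathrm{dist}(S_j,\partial B(0,1)) = 1-R_j = r_j/(2\mu) = 2M_0 r_j$. This identity is the reason for choosing $\mu = 1/(4M_0)$: the dilated ball $M_0 B(\bx,r_j)$ for $\bx \in S_j$ has radius $M_0 r_j = (1-R_j)/2$, so it stays inside $B(0,1)$ at distance at least $(1-R_j)/2$ from the boundary.

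The construction is as follows. Take $\cF_0 = \{B(0,r_0)\}$, where $r_0 = 2\mu$. For $j \ge 1$, let $S_j \subset \partial B(0,R_j)$ be a maximal $\eta r_j$-separated set for a small dimensional constant $\eta$ (say $\eta = 1/8$). By maximality, every point of $\partial B(0,R_j)$ lies within $\eta r_j$ of $S_j$, and by a standard volume-counting argument $\# S_j \le C(n)\,(R_j/r_j)^{n-1}$.

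For (i), take a point $\bz$ with $\rho_{j-1} \le |\bz| < \rho_j$. Its radial distance to the sphere of radius $R_j$ is at most $\max(R_{j+1}-R_j,\ R_j-R_{j-1})/2 \le r_{j-1}/4 \le r_j/3$. Adding the tangential error $\eta r_j$ shows that $\bz$ lies in some $B(\bx,r_j)$ with $\bx \in S_j$. The annuli $\{\rho_{j-1} \le |\bz| < \rho_j\}$ exhaust $B(0,1)$ because $R_j \to 1$.

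For (ii), given $\bx \in S_j$, choose $\bx' \in S_{j+1}$ within $\eta r_{j+1}$ of the radial point $\frac{R_{j+1}}{R_j}\bx$. Then $|\bx-\bx'| \le r_j/2 + \eta r_{j+1}$. The midpoint-type ball $B(\bz, r_{j+1}/2)$, with $\bz$ chosen on the segment from $\bx$ to $\bx'$ near $\bx'$, then lies in both balls, because $r_j \ge r_{j+1}$ and $r_{j+1}/r_j = 1-\mu \ge 3/4$. This step requires the inequality $|\bz - \bx| + r_{j+1}/2 < r_j$, which is a routine check given these ratios. For $j = 0$ the ball $B(0,r_0)$ contains everything near the origin and the argument is immediate.

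For (iv), fix a point $\bz$ with $1-|\bz| =: d$. Any dilated ball $M_0 B(\bx,r_j)$ containing $\bz$ satisfies $|\,|\bz| - R_j| \le M_0 r_j = (1-R_j)/2$, so $1-R_j \in [2d/3,\, 2d]$. Since $1-R_j$ decays geometrically with ratio $1-\mu$, the number of admissible $j$ is at most $C/\mu \sim C M_0$. For each admissible $j$, the separation of $S_j$ gives at most $C(n)\,(M_0 r_j/(\eta r_j))^{n-1} = C M_0^{n-1}$ centers within $M_0 r_j$ of $\bz$. Multiplying the two counts gives $N_2 M_0^{n}$.

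For (iii), I read the statement as a count of chains emanating from a fixed ball $B \in \cF_j$ (equivalently, the number of $B' \in \cF_k$ reachable from $B$). Connected balls have centers within $2r_\ell$ of each other. Following radial rays, the centers of a chain from $\bx \in S_j$ stay within angular distance $\lesssim \sum_{\ell\ge j} r_\ell \le r_j/\mu = 4M_0 r_j$ of $\bx$. The reachable set in $S_k$ therefore lies in a cap of radius $C M_0 r_j$ on $\partial B(0,R_k)$, and such a cap contains at most $C(M_0 r_j/(\eta r_k))^{n-1}$ points of $S_k$. This bounds the number of reachable endpoints by $N_1 M_0^{n-1}(r_j/r_k)^{n-1}$. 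The paper's use of this count (as in Lemma \ref{lma:L1Cover} iii)) only needs the number of endpoints $B'$ reachable from $B$, and I would prove it in that form. If distinct chains must be counted, one can fix a canonical choice of connected successor and thereby reduce to counting endpoints.

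This last step is the main obstacle. The number of raw sequences could grow like $C^{k-j}$, so the proof has to work with a selected chain system rather than with all sequences.
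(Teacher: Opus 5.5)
The paper does not prove this proposition; it only points to \cite{Kohn1982}. Your argument is a self-contained replacement for that citation, and it is correct.

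\textbf{What your construction establishes.} You take $S_j$ to be a maximal $\eta r_j$-separated subset of $\partial B(0,R_j)$, with $\eta=1/8$ fixed. The identity $1-R_j=2M_0r_j$ then gives (i), (ii), and the containment $M_0B\subset B(0,1)$.

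The exponents of $M_0$ and of $r_j/r_k$ in (iii) and (iv) come out exactly because you pack using $(n-1)$-dimensional surface measure on the spheres. In (iv) this gives at most $CM_0$ admissible levels, times $CM_0^{n-1}$ centers per level. In (iii) the chain centers move a total Euclidean distance of at most $\sum_{\ell\ge j}2r_\ell=2r_j/\mu=8M_0r_j$; no angular bookkeeping is needed.

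The surface-measure count matters. An $n$-dimensional volume count would give $(r_j/r_k)^n$ in (iii). That would break the application, where the weight $(r_j/r_k)^{\beta-1}$ must be summable over $k\ge j$.

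Your ``routine check'' in (ii) closes as follows. Note first that $|\bx-\bx'|\ge R_{j+1}-R_j=r_j/2\ge r_{j+1}/2$. Put $\bz$ on the segment at distance $r_{j+1}/2$ from $\bx'$ toward $\bx$. Then $|\bz-\bx'|+r_{j+1}/2=r_{j+1}$ and $|\bz-\bx|+r_{j+1}/2=|\bx-\bx'|\le r_j/2+\eta r_{j+1}<r_j$. So only $|\bx-\bx'|\le r_j$ is used, and the bound $1-\mu\ge 3/4$ is not needed.

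\textbf{On the reading of (iii).} You read (iii) as bounding the number of $B'\in\mathcal{F}_k$ reachable from a fixed $B\in\mathcal{F}_j$. That is the reading that is both true and actually used: it is the form invoked in (iv)(c) of Theorem~\ref{thm:KornBiLipBall} (compare Lemma~\ref{lma:L1Cover} iii)).

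\textbf{Correction to your fallback remark.} Fixing a canonical \emph{successor} for each ball gives one forward chain out of each ball, so it does not reach every $B'\in\mathcal{F}_k$. The proof of Theorem~\ref{thm:KornBiLipBall} needs a chain from $B_0$ to every $B'$. That is a predecessor property, and (ii) as stated (a successor property) does not supply it.

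The right reduction is to fix a canonical \emph{parent}: for each $B'\in\mathcal{F}_k$ with $k\ge 1$, choose a ball of $\mathcal{F}_{k-1}$ connected to it. This gives a tree rooted at $B(0,r_0)$ with exactly one chain from the root to each ball. The number of tree chains from a fixed $B\in\mathcal{F}_j$ into $\mathcal{F}_k$ is then the number of its descendants there, so your endpoint count bounds it.

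Parents exist in your construction by mirroring (ii). Project $\bx'\in S_{j+1}$ radially onto $\partial B(0,R_j)$, and take $\bx\in S_j$ within $\eta r_j$ of that point. Then $|\bx-\bx'|\le r_j/2+\eta r_j\le r_j$, and the same choice of $\bz$ works. For $j=0$ this holds because $|\bx'|=\mu\le r_0$. You should add this step, since it is what the covering is used for.
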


We also record a matrix norm comparison lemma on balls:
\begin{lemma}\label{lem:MatrixNormComp}
    Let $B_1 = B(\bx_1,R_1)$, $B_2 = B(\bx_2,R_2)$,  $B_3 = B(\bx_3,R_3)$ be Euclidean balls in $\mathbb{R}^n$ with $B_1, B_2 \subset B_3$. Then for any matrix $\mathbf{A} \in \mathbb{R}^{n \times n}$, $s \in (0,1)$, and $p \in [1,\infty)$,
    \begin{equation}\label{eq:MatrixNormComp}
        \left( \int_{B_1} \fint_{B_1} \frac{|\mathbf{A}\bx - \mathbf{A}\by|^p}{|\bx-\by|^{n+sp}} \, \mathrm{d}\by\mathrm{d}\bx \right)^{1/p} \leq C(n,p) 
        \left( \frac{R_3}{R_2} \right)^{1-s}
        \left( \int_{B_2} \fint_{B_2} \frac{|\mathbf{A}\bx - \mathbf{A}\by|^p}{|\bx-\by|^{n+sp}} \, \mathrm{d}\by\mathrm{d}\bx \right)^{1/p}.
    \end{equation}
\end{lemma}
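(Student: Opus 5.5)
Both sides are homogeneous in $\bA$, so the plan is to reduce \eqref{eq:MatrixNormComp} to two explicit evaluations on a single ball and then compare radii. For a ball $B(\bz,R)$, define
\begin{equation*}
I(R) := \int_{B(\bz,R)} \fint_{B(\bz,R)} \frac{|\bA(\bx-\by)|^p}{|\bx-\by|^{n+sp}} \, \rmd \by \, \rmd \bx .
\end{equation*}
By translation invariance, $I(R)$ does not depend on the center $\bz$. The change of variables $\bx = \bz + R\bx'$, $\by = \bz + R\by'$ gives
\begin{equation*}
I(R) = R^{n}\, R^{p}\, R^{-sp} \, I_B = R^{n+(1-s)p}\, I_B,
\end{equation*}
where $I_B$ is the same quantity on the unit ball. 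The exponent comes from three factors: $R^n$ from the outer integral, the average is scale-invariant, and $|\bA(\bx-\by)|^p/|\bx-\by|^{n+sp}$ scales like $R^{p-n-sp}$ once we account for $\rmd \by$ inside the average. Concretely, $\rmd\bx$ gives $R^n$, the averaged $\rmd\by$ gives $R^0$, and the integrand gives $R^{p-n-sp}$. So the total is $R^{n}\cdot R^{p-n-sp}\cdot R^{n}/R^{n}$.

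Carrying this out carefully, $\int_{B_R}\fint_{B_R} = R^n \int_B \fint_B$ in measure, while the integrand contributes $R^{(1-s)p-n}$. Hence $I(R) = R^{(1-s)p} I_B$.

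By \eqref{eq:NormCompEquiv}, $I_B$ is comparable to $(1-s)^{-1}|\bA|^p$ with constants $C_1(n,p)^p$ and $C_2(n,p)^p$. Therefore
\begin{equation*}
\frac{C_1^p}{1-s} R^{(1-s)p}|\bA|^p \le I(R) \le \frac{C_2^p}{1-s} R^{(1-s)p}|\bA|^p .
\end{equation*}

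It then suffices to compare $R_1^{1-s}$ with $R_2^{1-s}$. Applying the upper bound on $B_1$ and the lower bound on $B_2$, the factor $(1-s)^{-1}$ cancels, and the left side of \eqref{eq:MatrixNormComp} is at most
\begin{equation*}
\frac{C_2}{C_1}\left(\frac{R_1}{R_2}\right)^{1-s}
\end{equation*}
times the right-hand integral. Finally, $B_1 \subset B_3$ forces $R_1 \le R_3$, and since $1-s > 0$ we get $(R_1/R_2)^{1-s} \le (R_3/R_2)^{1-s}$. This gives the claim with $C(n,p) = C_2/C_1$.

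The only point needing care is the scaling exponent. There is no real obstacle, since the containment $B_2 \subset B_3$ is not even needed. The key is that the $(1-s)^{-1}$ blow-up appears identically on both sides, because \eqref{eq:NormCompEquiv} is two-sided with $s$-independent constants. It is this cancellation that makes the comparison uniform in $s$.
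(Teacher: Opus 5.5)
Your proof is correct and coincides with the paper's: the scaling identity $I(R)=R^{(1-s)p}I_B$, then the two-sided $s$-uniform bound \eqref{eq:NormCompEquiv} applied from above on $B_1$ and from below on $B_2$ so that the factor $(1-s)^{-1}$ cancels, and finally $R_1\le R_3$. The only blemish is your first scaling display $I(R)=R^{n+(1-s)p}I_B$, which omits the factor $R^{-n}$ coming from $|\mathbf{x}-\mathbf{y}|^{-n}$; you correct it immediately afterwards, so just delete that display.
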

\begin{proof}
    For any ball $B(\bx_0, R)$, substituting $\bx = \bx_0 + R\mathbf{z}$ and $\by = \bx_0 + R\mathbf{w}$ gives the scaling relation:
    \begin{equation}\label{eq:BallSeminormScaling}
        \left( \int_{B(\bx_0, R)} \fint_{B(\bx_0, R)} \frac{|\mathbf{A}\bx - \mathbf{A}\by|^p}{|\bx-\by|^{n+sp}} \, \mathrm{d}\by \, \mathrm{d}\bx \right)^{1/p} = R^{1-s} \left( \int_{B(0,1)} \fint_{B(0,1)} \frac{|\mathbf{A}\mathbf{z} - \mathbf{A}\mathbf{w}|^p}{|\mathbf{z}-\mathbf{w}|^{n+sp}} \, \mathrm{d}\mathbf{w} \, \mathrm{d}\mathbf{z} \right)^{1/p}.
    \end{equation}
    Applying \eqref{eq:NormCompEquiv} and \eqref{eq:BallSeminormScaling} to $B_1 = B(\bx_1, R_1)$ and $B_2 = B(\bx_2, R_2)$,
    \begin{equation}
        \begin{aligned}
            \left( \int_{B_1} \fint_{B_1} \frac{|\mathbf{A}\bx - \mathbf{A}\by|^p}{|\bx-\by|^{n+sp}} \, \mathrm{d}\by \, \mathrm{d}\bx \right)^{1/p} 
            &\leq \frac{C_2}{1-s} R_1^{1-s} |\mathbf{A}| \\
            &\leq \frac{C_2}{C_1} \left( \frac{R_1}{R_2} \right)^{1-s} \left( \int_{B_2} \fint_{B_2} \frac{|\mathbf{A}\bx - \mathbf{A}\by|^p}{|\bx-\by|^{n+sp}} \, \mathrm{d}\by \, \mathrm{d}\bx \right)^{1/p}, 
        \end{aligned}
    \end{equation}
    which proves \eqref{eq:MatrixNormComp} after noting that $R_1 \le R_3$.
\end{proof} 

\begin{lemma}[\cite{Kohn1982}, Lemma~4.11]\label{lem:AffineNormComp}
    Let $B_1 = B(\bx_1,R_1)$, $B_2 = B(\bx_2,R_2)$,  $B_3 = B(\bx_3,R_3)$ be Euclidean balls in $\mathbb{R}^n$ with $B_1, B_2 \subset B_3$. Then for any affine map $\bspsi(\bx) = \mathbf{A}\bx + \bfb$, and $p \in [1,\infty)$,
    \begin{equation}\label{eq:AffineNormComp}
        \vnorm{\bspsi}_{L^p(B_1)} \leq C(n,p) \left( \frac{R_1}{R_2} \right)^{n/p}
        \frac{R_3}{R_2}
        \vnorm{\bspsi}_{L^p(B_2)}.
    \end{equation}
\end{lemma}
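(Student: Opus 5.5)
The strategy is to reduce everything to the unit ball by affine rescaling, and then prove two facts there: a lower bound for the $L^p$ norm of an affine map on an interior ball by its norm on the unit ball, and an upper bound by its norm on a comparable ball. Composing the two gives the comparison between $B_1$ and $B_2$. The ingredients are the norm equivalences \eqref{eq:AffineMapEquivalence:Rescaled} and \eqref{eq:AffineMapEquivalence:Rescaled:MatrixPart}, together with the fact that the class of affine maps is invariant under translations and dilations.

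\textbf{Step 1 (normalize).} Apply the change of variables $\bx = \bx_3 + R_3 \bz$. The map $\bspsi(\bx_3 + R_3\bz) = R_3\bA\bz + (\bA\bx_3+\bfb)$ is again affine. Each $L^p$ norm picks up the factor $R_3^{n/p}$, and this factor cancels on the two sides of \eqref{eq:AffineNormComp}. The ratios $R_1/R_2$ and $R_3/R_2$ are unchanged. So it suffices to treat $B_3 = B(0,1)$, with $B_1, B_2 \subset B(0,1)$ and hence $R_1, R_2 \le 1$.

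\textbf{Step 2 (the two pointwise bounds).} On $B(0,1)$ we have $|\bspsi(\bx)| \le |\bA| + |\bfb| = \vnorm{\bspsi}_{\mathrm{aff}}$. Consequently
\begin{equation*}
\vnorm{\bspsi}_{L^p(B_1)} \le |B_1|^{1/p}\vnorm{\bspsi}_{L^\infty(B(0,1))} \le C(n) R_1^{n/p} \vnorm{\bspsi}_{\mathrm{aff}}.
\end{equation*}
This yields the $R_1^{n/p}$ factor. It remains to show
\begin{equation*}
\vnorm{\bspsi}_{\mathrm{aff}} \le C(n,p)\, R_2^{-n/p-1}\vnorm{\bspsi}_{L^p(B_2)}.
\end{equation*}
For this, recenter at $\bx_2$ and write $\bspsi(\bx) = \bA(\bx-\bx_2) + \bfb'$, where $\bfb' = \bspsi(\bx_2)$. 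Applying \eqref{eq:AffineMapEquivalence:Rescaled} on $B(0,R_2)$ with $R_2 \le 1$ gives
\begin{equation*}
|\bA| + |\bfb'| \le C R_2^{-n/p-1}\vnorm{\bspsi}_{L^p(B_2)}.
\end{equation*}
Alternatively, the bound on $|\bA|$ follows directly from \eqref{eq:AffineMapEquivalence:Rescaled:MatrixPart}. To return to the original coefficients, note that $\bfb = \bfb' - \bA\bx_2$ and $|\bx_2| \le 1$, so $|\bfb| \le |\bfb'| + |\bA|$. The bound therefore persists with a doubled constant.

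\textbf{Step 3 (combine).} Putting the two bounds together gives
\begin{equation*}
\vnorm{\bspsi}_{L^p(B_1)} \le C\, R_1^{n/p} R_2^{-n/p} R_2^{-1}\vnorm{\bspsi}_{L^p(B_2)}.
\end{equation*}
After the normalization $R_3 = 1$, this is exactly \eqref{eq:AffineNormComp}.

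The only delicate point is Step 2. The translation constant has to be controlled by $|\bx_2| \le R_3$, and this is precisely where the containment $B_1, B_2 \subset B_3$ is used. It also explains why the loss is the single factor $R_3/R_2$ rather than a higher power: the linear part of $\bspsi$ costs exactly one power of the inverse radius.
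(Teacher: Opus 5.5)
Your proof is correct and complete. The paper does not prove this lemma; it is quoted from Kohn's Lemma~4.11 without argument, so there is no in-text proof to compare with. Your proposal supplies a self-contained derivation from the affine norm equivalence \eqref{eq:AffineMapEquivalence:Rescaled} already recorded in Section~2.

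The steps check out:
\begin{itemize}
\item The dilation $\bx = \bx_3 + R_3\bz$ leaves both ratios unchanged, and the common factor $R_3^{n/p}$ cancels.
\item After normalization, $B_1 \subset B(0,1)$ gives $|\bspsi| \le |\bA| + |\bfb|$ on $B_1$, and hence the factor $R_1^{n/p}$.
\item Recentering at $\bx_2$ and applying the right-hand inequality of \eqref{eq:AffineMapEquivalence:Rescaled} with $R = R_2 \le 1$ yields $|\bA| + |\bspsi(\bx_2)| \le C R_2^{-n/p-1}\vnorm{\bspsi}_{L^p(B_2)}$.
\item The bound $|\bx_2| \le 1$ converts $\bspsi(\bx_2)$ back to $\bfb$ at the cost of a factor $2$.
\end{itemize}

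The recentering is the essential point, since it keeps the constant independent of where $B_2$ sits inside $B_3$.

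Two small clarifications:
\begin{itemize}
\item Both containments are used, not only in the translation step. $B_1 \subset B_3$ enters through $|\bx| \le 1$ in the pointwise bound on $B_1$. $B_2 \subset B_3$ supplies both $R_2 \le 1$ and $|\bx_2| \le 1$.
\item If you take the alternative route through \eqref{eq:AffineMapEquivalence:Rescaled:MatrixPart}, only its right-hand inequality is valid as stated; the left one fails for pure translations. That is the only direction you need.
\end{itemize}
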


\begin{theorem}[Korn Inequality on a bi-Lipschitz image of a ball]\label{thm:KornBiLipBall}
    Let $p \in [1,\infty)$ and $0 < s < 1$.
    Let $\Omega = \bsgamma(B(0,1))$, where $\bsgamma : B(0,1) \to \Omega$ is a bi-Lipschitz mapping with Lipschitz constant $L \geq 1$. Then for every $\bu \in \mathcal{X}^{s,p}(\Omega)$, there exists $\bspsi \in \mathcal{R}$ with $\vnorm{\bspsi}_{\mathrm{aff}} \leq C(n,p,\bsgamma,L) \vnorm{\bu}_{L^p(\Omega)}$ such that 
    \begin{equation}
        (1-s)^{-1} \vnorm{\bu - \bspsi}_{L^p(\Omega)}^p + [\bu - \mathbf{\bspsi}]_{W^{s,p}(\Omega)}^p \leq C \alpha_s(p) [\bu]_{\mathcal{X}^{s,p}(\Omega)}^p,
    \end{equation}
    where $C = C(n,p,L) > 0$ and $\alpha_s(p)$ is defined in Theorem \ref{thm:mainKorn2nd}.
\end{theorem}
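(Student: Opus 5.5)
We transport the Whitney-type ball covering of Proposition \ref{prop:WhitneyBallCovering} to $\Omega$ through $\bsgamma$ and chain the local ball estimates (Theorems \ref{thm:KornBallStrong} and \ref{thm:KornBallStrong:L1}) across the covering. Since $\bsgamma$ is bi-Lipschitz with constant $L$, for each $B(\bx,r)\in\cF_j$ there is a Euclidean ball $\widehat B=B(\bsgamma(\bx),r/L)\subset\bsgamma(B(\bx,r))$. Moreover $\bsgamma(M_0B(\bx,r))\supset B(\bsgamma(\bx),M_0r/L)$ and $\bsgamma(B(\bx,r))\subset B(\bsgamma(\bx),Lr)$. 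We therefore choose $M_0$ comparable to $L^2$ (times a dimensional factor). Then the Euclidean balls $\widetilde B:=B(\bsgamma(\bx),M_0 r/L)$ cover $\Omega$, they contain the images of connected pairs together with a ball of radius comparable to $r_{j+1}/L$ in their intersection, and they still have bounded overlap, with the overlap constant depending on $L$. On each $\widetilde B$ we set $\bspsi_{\widetilde B}=\Pi_{\widetilde B}[\bu]$ (after intersecting with $\Omega$; we use the interior balls $\widehat B$ where needed, as in Lemma \ref{lma:CoercivityScaled}). Theorem \ref{thm:KornBallStrong} then gives
\[
r^{-sp}\|\bu-\bspsi_{\widetilde B}\|^p_{L^p(\widetilde B)}+(1-s)[\bu-\bspsi_{\widetilde B}]^p_{W^{s,p}(\widetilde B)}\le C(1-s)\,\alpha_s(p)\,[\bu]^p_{\cX^{s,p}(\widetilde B)} .
\]
The global rigid motion is $\bspsi:=\bspsi_{B_0}$, attached to the central ball $\cF_0$. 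The bound $\|\bspsi\|_{\mathrm{aff}}\le C\|\bu\|_{L^p(\Omega)}$ follows from Lemma \ref{lem:ProjectionBall} and \eqref{eq:AffineMapEquivalence:Rescaled}.

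Next we control $\bu-\bspsi$ on each element by chaining. For $B'\in\cF_k$ we fix a chain $B_0,\dots,B_k=B'$, which exists by (ii). We write $\bu-\bspsi=(\bu-\bspsi_k)+\sum_{\ell}(\bspsi_{\ell+1}-\bspsi_\ell)$. On a connected pair, the difference $\bspsi_{\ell+1}-\bspsi_\ell$ is controlled on the common ball by the two local $L^p$ errors. We transfer this bound to larger balls with Lemma \ref{lem:AffineNormComp} for the $L^p$ part, and with Lemma \ref{lem:MatrixNormComp} and \eqref{eq:NormCompEquiv} for the $W^{s,p}$ part of the affine difference. The ratios $R_3/R_2$ are bounded by $C(L,M_0)$ and the radii shrink geometrically, so the affine differences contribute
\[
\sum_\ell C\,(r_\ell/r_k)^{\text{power}}\, r_\ell^{sp}\,\alpha_s(p)[\bu]^p_{\cX^{s,p}(\widetilde B_\ell)} .
\]
We then sum over $B'\in\cF_k$ and over $k$, and exchange the order of summation as in the proof of Lemma \ref{lma:CoercivityScaled:Korn1st:L1}. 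Each $\widetilde B_\ell\in\cF_\ell$ lies in at most $N_1M_0^{n-1}(r_\ell/r_k)^{n-1}$ chains by (iii), and this counting factor must be absorbed by the gain $(r_k/r_\ell)^{n}$ coming from the measure of $B'$ relative to $B_\ell$ in Lemma \ref{lem:AffineNormComp}. The resulting geometric series converges with constants uniform in $s$. Bounded overlap (iv) then turns $\sum[\bu]^p_{\cX^{s,p}(\widetilde B)}$ into $C[\bu]^p_{\cX^{s,p}(\Omega)}$. This yields the $L^p$ term, using $r\le 1$ so that $(1-s)^{-1}\|\cdot\|^p$ is dominated by the $r^{-sp}$-weighted local bounds times $(1-s)^{-1}$, which the ball estimate supplies.

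For the Gagliardo seminorm we split $\Omega\times\Omega$ into near pairs $|\bx-\by|<c\,\mathrm{dist}$-scale, which lie in a common $\widetilde B$, and far pairs. Near pairs are handled by the local seminorm bounds together with the chained affine differences measured by Lemma \ref{lem:MatrixNormComp}. For far pairs with $\bx\in B'\in\cF_k$, $\by\in B''\in\cF_m$, the kernel is bounded by $C\max(r_k,r_m)^{-n-sp}$ up to the chain geometry, and we bound by $|\bu-\bspsi|(\bx)^p+|\bu-\bspsi|(\by)^p$ integrated against $\int_{|\bx-\by|>cr_k}|\bx-\by|^{-n-sp}\,d\by\lesssim r_k^{-sp}/s$. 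The factor $1/s$ is harmless once $s$ is bounded below; small $s$ is treated separately by the crude bound $[\cdot]_{W^{s,p}}^p\lesssim s^{-1}\|\cdot\|^p_{L^p}$ together with the $L^p$ estimate. These far terms are controlled by the weighted $L^p$ sums already obtained.

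The main obstacle is the chain summation: keeping the counting factor from (iii) strictly dominated by the geometric decay while keeping every constant independent of $s$ as $s\to1$. For $p=1$ the factor $\alpha_s(1)=(1-s)^{-1}$ is simply inherited from Theorem \ref{thm:KornBallStrong:L1}. If the exponent bookkeeping fails to give decay, I would first try weighting the chain sum via Hölder with weights $2^{\epsilon(\ell-k)}$ as in the proof of Lemma \ref{lma:CoercivityScaled:Korn1st:L1}.
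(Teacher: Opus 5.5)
Your chaining argument is essentially the paper's. It uses the transported Whitney balls $B(\bsgamma(\bx),Lr_j)$ with $M_0=L^2$; their doubles already lie in $\Omega$, so no intersecting is needed. It sets $\bspsi_j=\Pi_{B_j}[\bu]$ and transfers $\bspsi_{j+1}-\bspsi_j$ from the common ball to $B'$ by Lemmas \ref{lem:MatrixNormComp} and \ref{lem:AffineNormComp}, gaining a factor $(r_k/r_j)^{n/p}$, and the chain count $(r_j/r_k)^{n-1}$ is absorbed by that volume factor. One correction: the weighted H\"older step is not a fallback. It is needed for every $p>1$, since otherwise $(\sum_{j\le k}a_j)^p$ costs a factor $(k+1)^{p-1}$; the paper uses weights $r_j^{-\beta/(p-1)}$. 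With that step you correctly obtain the $L^p$ term and the Whitney-local part $\int_\Omega\int_{\{|\by-\bx|\le\eta\,\mathrm{dist}(\bx,\partial\Omega)\}}|\bv(\bx)-\bv(\by)|^p|\bx-\by|^{-n-sp}\,\rmd\by\,\rmd\bx$ of the seminorm, where $\bv=\bu-\bspsi_0$.

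The gap is your treatment of the far pairs. Bounding $|\bv(\bx)-\bv(\by)|^p$ by $C(|\bv(\bx)|^p+|\bv(\by)|^p)$ reduces that part to $\int_\Omega|\bv|^p\,\mathrm{dist}(\bx,\partial\Omega)^{-sp}\,\rmd\bx\approx\sum_k r_k^{-sp}\sum_{B'\in\mathcal{G}_k}\vnorm{\bv}^p_{L^p(B')}$. The sums you already have do not control this quantity. Inserting $r_k^{-sp}$ into the chain estimate, each term $\bspsi_{j+1}-\bspsi_j$ contributes $(r_k/r_j)^{n-sp}$ per chain. After multiplying by the chain count $(r_j/r_k)^{n-1}$ you are left with $(r_k/r_j)^{1-sp}$, whose sum over $k\ge j$ diverges when $sp\ge 1$, in particular for all $s$ near $1$. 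No choice of H\"older weights repairs this, because the quantity itself is infinite. On $\Omega=B(0,1)$ take $\bu(\bx)=x_1\be_1$. Then $[\bu]_{\cX^{s,p}(B)}<\infty$, but for every $\bspsi\in\cR$ the field $\bu-\bspsi$ is a nonzero affine map, so $\int_B|\bu-\bspsi|^p(1-|\bx|)^{-sp}\,\rmd\bx=\infty$ whenever $sp\ge1$. (A Hardy-type bound of this kind requires vanishing traces.) Separately, $[\cdot]^p_{W^{s,p}}\lesssim s^{-1}\vnorm{\cdot}^p_{L^p}$ is false for every $s>0$, since the kernel is not integrable on the diagonal.

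The paper never estimates the far pairs directly. It proves the bound only for the truncated seminorm above, with $\eta=1/(3L^2)$ so that the inner region lies in $2B'$. It then invokes Dyda's theorem that on Lipschitz domains this truncated seminorm is comparable to the full $W^{s,p}(\Omega)$ seminorm. To avoid that citation, you would have to compare $\bv(\bx)$ with $\bv(\by)$ through a chain of balls of size comparable to $|\bx-\by|$ joining the two points, exploiting cancellation, rather than through $|\bv(\bx)|+|\bv(\by)|$.
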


\begin{proof}
    Let $\bsgamma : B(0,1) \to \Omega$ be a bi-Lipschitz mapping with constant $L \geq 1$, satisfying
    \begin{equation}
        \frac{1}{L} |\bx-\by| \leq |\bsgamma(\bx) - \bsgamma(\by)| \leq L |\bx-\by| \quad \text{for all } \bx, \by \in B(0,1).
    \end{equation}
    It follows that
    \begin{equation}\label{eq:ComparabilityOfCovers}
        \bsgamma\big(B(\bx, r/L)\big) \subset B\big(\bsgamma(\bx), r\big) \subset \bsgamma\big(B(\bx, Lr)\big)
    \end{equation}
    for any $\bx \in B(0,1)$ and $r \in \big(0, \frac{1}{L}(1-|\bx|)\big)$.
    
    We construct an open covering of $\Omega$ as follows: set $M_0 := L^2$, let $\mu, R_j, r_j$ be as in Proposition \ref{prop:WhitneyBallCovering}, and let $\mathcal{F}_j = \{ B(\bx, r_j) : \bx \in S_j \}$ be the Whitney cover of $B(0,1)$. Define the collections
    \begin{equation}
        \mathcal{G}_j := \big\{ B(\bsgamma(\bx), L r_j) : \bx \in S_j \big\} \quad \text{and} \quad 2\mathcal{G}_j := \big\{ B(\bsgamma(\bx), 2 L r_j) : \bx \in S_j \big\}.
    \end{equation}
    By Proposition \ref{prop:WhitneyBallCovering} and \eqref{eq:ComparabilityOfCovers}, these collections satisfy the following:
    \begin{enumerate}
        \item[\textbf{(i)}] For each $j \in \mathbb{N}_0$, all elements of $2\mathcal{G}_j$ are contained in $\Omega$.
        \item[\textbf{(ii)}] For each $j \in \mathbb{N}$, the elements of $\mathcal{G}_j$ cover the set $\bsgamma\big( B(0,\frac{R_{j+1}+R_j}{2}) \setminus B(0,\frac{R_j+R_{j-1}}{2}) \big)$, with $\mathcal{G}_0 = \{ B(\bsgamma(0), L r_0) \}$. Thus $\bigcup_{j=0}^{\infty} \mathcal{G}_j$ forms an open cover of $\Omega$. 
        \item[\textbf{(iii)}] Two balls $B \in \mathcal{G}_j$ and $B' \in \mathcal{G}_{j+1}$ are \emph{connected} if their intersection $B \cap B'$ contains a ball of radius $\frac{r_{j+1}}{2L}$. For every $B \in \mathcal{G}_j$ there exists $B' \in \cG_{j+1}$ such that $B$ and $B'$ are connected.
        \item[\textbf{(iv)}] For $j < k$, a sequence $\{B_j, B_{j+1}, \dots, B_k\}$ is a \emph{chain} from $B \in \mathcal{G}_j$ to $B' \in \mathcal{G}_k$ if $B_j = B$, $B_k = B'$, $B_\ell \in \mathcal{G}_\ell$ for $\ell = j,j+1,\ldots,k$, and $B_\ell$ is connected to $B_{\ell+1}$ for all $\ell = j, \dots, k-1$. The following properties hold:
        \begin{enumerate}
            \item[\textbf{(a)}] If $B = B(\bx,r_j) \in \mathcal{F}_j$ and $B' = B(\by,r_{j+1}) \in \mathcal{F}_{j+1}$ intersect as in Proposition \ref{prop:WhitneyBallCovering}, then $B(\bsgamma(\bx), L r_j ) \in \mathcal{G}_j$ and $B(\bsgamma(\by), L r_{j+1}) \in \mathcal{G}_{j+1}$ are connected in $\Omega$, and the ball contained in their intersection can be centered at $\bsgamma(\mathbf{z})$ for some $\mathbf{z} \in B \cap B'$.
            \item[\textbf{(b)}] If for $j < k$ there exists a chain from $B(\bx,r_j) \in \cF_j$ to $B(\by,r_k) \in \cF_k$ as in the definition of Proposition \ref{prop:WhitneyBallCovering}, then there exists a chain from $B(\bsgamma(\bx),Lr_j) \in \cG_j$ to $B(\bsgamma(\by),Lr_k) \in \cG_k$.
            \item[\textbf{(c)}] For $j < k$, the total number of chains from a fixed element of $\cG_j$ to any element in $\cG_k$ is at most $N_1 M_0^{n-1} (r_j / r_k)^{n-1}$, where $N_1 = N_1(n) > 0$.
        \end{enumerate}
        \item[\textbf{(iv)}] Any point in $\Omega$ is contained in at most $N_2 M_0^n$ elements of $\bigcup_{j=0}^\infty 2\mathcal{G}_j$, where $N_2 = N_2(n) > 0$.
    \end{enumerate}
    
    Set $B_0 := B(\bsgamma(0), L r_0) \in \mathcal{G}_0$. 
    For brevity we define for an open set $G \subseteq \Omega$
    \begin{equation*}
        E^{s,p}_G(\bv) := \left( (1-s)^{-1} \vnorm{\bv}_{L^p(G)}^p + [\bv]_{W^{s,p}(G)}^p \right)^{1/p}
    \end{equation*}
    which is a norm on $W^{s,p}(G;\bbR^n)$.
    By Theorem \ref{thm:KornBallStrong} (Theorem \ref{thm:KornBallStrong:L1} in the case $p = 1$), the rigid motion $\bspsi_0 = \Pi_{B_0}[\bu] \in \mathcal{R}$ satisfies
    \begin{equation}
        \big( E^{s,p}_{B_0}(\bu-\bspsi_0) \big)^p \leq C(n,p) \alpha_s(p) [\bu]_{\mathcal{X}^{s,p}(B_0)}^p, 
    \end{equation}
    where we have used the fact that $(2L)^s \geq 1.$
    To estimate $\bu - \bspsi_0$ on an arbitrary ball $B' \in \mathcal{G}_k$ ($k \in \mathbb{N}_0$), let $\{ B_0, B_1, \dots, B_k = B' \}$ be a chain from $B_0$ to $B'$. On each ball $B_j \in \mathcal{G}_j$ ($j = 1, \dots, k$), let $\bspsi_j = \Pi_{B_j}[\bu] \in \mathcal{R}$ satisfy
    \begin{equation}
        \big( E^{s,p}_{B_j}(\bu-\bspsi_j) \big)^p  \leq C(n,p) \alpha_s(p) [\bu]_{\mathcal{X}^{s,p}(B_j)}^p,
    \end{equation}
    again thanks to Theorem \ref{thm:KornBallStrong} (or Theorem \ref{thm:KornBallStrong:L1}).
    By the triangle inequality applied to $E^{s,p}$
    \begin{equation}\label{eq:NonlinPoinProof:Estimate1}
        E^{s,p}_{B_k}(\bu - \bspsi_0) \leq E^{s,p}_{B_k}(\bu - \bspsi_k) + \sum_{j=0}^{k-1} E^{s,p}_{B_k}(\bspsi_{j+1} - \bspsi_j).
    \end{equation}
    Since $B_j \cap B_{j+1}$ contains a ball $B(\bsgamma(\mathbf{z}_j), \frac{r_{j+1}}{2L})$ centered at $\bsgamma(\mathbf{z}_j)$, Theorem \ref{thm:KornBallStrong} (or Theorem \ref{thm:KornBallStrong:L1}) gives
    \begin{equation}\label{eq:NonlinPoinProof:Estimate2}
        \begin{aligned}
            E^{s,p}_{B(\bsgamma(\bz_j),\frac{r_{j+1}}{2L})}(\bspsi_{j+1} - \bspsi_j)
            &\leq E^{s,p}_{B_j}(\bu - \bspsi_j) + E^{s,p}_{B_{j+1}}(\bu - \bspsi_{j+1})
            \\
            &\leq C(n,p) \alpha_s(p)^{1/p} \left( [\bu]_{\mathcal{X}^{s,p}(B_j)} + [\bu]_{\mathcal{X}^{s,p}(B_{j+1})} \right).
        \end{aligned}
    \end{equation}
    Next, we apply Lemma \ref{lem:MatrixNormComp} to modify the domain of integration for the seminorm of the affine maps. Since $B_k = B(\bsgamma(\bx_k), L r_k)$ and $B(\bsgamma(\mathbf{z}_j), \frac{r_{j+1}}{2L})$ are both contained in a ball centered at $\frac{\bsgamma(\bx_k) + \bsgamma(\mathbf{z}_j)}{2}$ with radius $Lr_k + \frac{r_{j+1}}{2L} + |\bsgamma(\mathbf{z}_j) - \bsgamma(\bx_k)|$, and since
    \begin{equation}
        |\bsgamma(\mathbf{z}_j) - \bsgamma(\bx_k)| \leq L \max\{ |\bx_j - \bx_k|, |\bx_{j+1} - \bx_k| \} \leq 2L (1 - R_j) = 4 L M_0 r_j,
    \end{equation}
    we have by Lemma \ref{lem:MatrixNormComp}, Lemma \ref{lem:AffineNormComp} and \eqref{eq:NonlinPoinProof:Estimate2}
    \begin{equation}\label{eq:NonlinPoinProof:Estimate3} 
        E^{s,p}_{B_k}(\bspsi_{j+1}-\bspsi_j) \leq C(n,p,L) \alpha_s(p)^{1/p} \left( \left( \frac{r_k}{r_j} \right)^{\frac{n}{p}} [\bu]_{\mathcal{X}^{s,p}(B_j)} + \left( \frac{r_k}{r_{j+1}} \right)^{\frac{n}{p}} [\bu]_{\mathcal{X}^{s,p}(B_{j+1})} \right),
    \end{equation}
    where we used that $(1+\frac{2L^2 r_k}{r_j} + 4 L^2 M_0)^{1-s} \leq C(n,L)$ for all $s \in [0,1]$.
    Inserting \eqref{eq:NonlinPoinProof:Estimate3} into \eqref{eq:NonlinPoinProof:Estimate1},
    \begin{equation} 
        E^{s,p}_{B_k}(\bu-\bspsi_0)
        \leq C(n,p,L) \alpha_s(p)^{1/p} \sum_{j=0}^k \left( \frac{r_k}{r_j} \right)^{\frac{n}{p}} [\bu]_{\mathcal{X}^{s,p}(B_j)}.
    \end{equation}
    For any $\beta \in (0,1]$, Hölder's inequality for $p > 1$ and the geometric series formula gives
    \begin{equation}
    \begin{aligned}
        \sum_{j=0}^k r_j^{-\frac{n}{p}} [\bu]_{\mathcal{X}^{s,p}(B_j)} &\leq \left( \sum_{j=0}^k r_j^{-\frac{\beta}{p-1}} \right)^{\frac{1}{p'}} \left( \sum_{j=0}^k r_j^{\beta-n} [\bu]_{\mathcal{X}^{s,p}(B_j)}^p \right)^{\frac{1}{p}}\\
        &\leq C(p,M_0) r_k^{-\frac{\beta}{p}} \left( \sum_{j=0}^k r_j^{\beta-n} [\bu]_{\mathcal{X}^{s,p}(B_j)}^p \right)^{\frac{1}{p}}.
    \end{aligned}
    \end{equation}
    (The estimate when $p = 1$ uses only the inequality $r_k \leq r_j$.)
    Therefore, writing $B_k = B'$
    \begin{equation}
        \big( E^{s,p}_{B'}(\bu-\bspsi_0) \big)^p
        \leq C(n,p,L) \alpha_s(p) \sum_{j=0}^k \left( \frac{r_j}{r_k} \right)^{\beta-n} [\bu]_{\mathcal{X}^{s,p}(B_j)}^p.
    \end{equation}
    Now set $\eta := \frac{1}{3L^2} < 1$. For any $\bx \in B' = B(\bsgamma(\bx_k), Lr_k)$, since $\mathrm{dist}(\bx,\partial\Omega) \leq 2 L r_k$, any point $\by$ satisfying $|\by-\bx| \leq \eta \mathrm{dist}(\bx,\partial\Omega)$ satisfies
    \begin{equation}
        |\by - \bsgamma(\bx_k)| \leq |\by-\bx| + |\bx - \bsgamma(\bx_k)| < \frac{2Lr_k}{3L^2} + Lr_k \leq 2 L r_k.
    \end{equation}
    Thus $\{ \by : |\by-\bx| \leq \eta \mathrm{dist}(\bx,\partial\Omega) \} \subset 2B'$. Therefore, we can estimate
    \begin{equation}
        \begin{aligned}
            \int_{B'} \int_{\{ |\by-\bx| \leq \eta \mathrm{dist}(\bx,\partial\Omega) \}} &\frac{|(\bu-\bspsi_0)(\bx) - (\bu-\bspsi_0)(\by)|^p}{|\bx-\by|^{n+sp}} \, \mathrm{d}\by \, \mathrm{d}\bx \\
            &\leq [\bu-\bspsi_0]_{W^{s,p}(2B')}^p \leq C(n,p,L) \alpha_s(p) \sum_{j=0}^k \left( \frac{r_j}{r_k} \right)^{\beta-n} [\bu]_{\mathcal{X}^{s,p}(2B_j)}^p.
        \end{aligned}
    \end{equation}
    Summing over all $B' \in \mathcal{G}_k$ and applying the bound (iv)(c) on the number of chains, we obtain for each $k \in \mathbb{N}_0$:
    \begin{equation}
        \begin{aligned}
            \sum_{B' \in \mathcal{G}_k} \Bigg( \frac{\vnorm{\bu-\bspsi_0}_{L^p(B')}^p}{1-s} + \int_{B'} & \int_{\{ |\by-\bx| \leq \eta \mathrm{dist}(\bx,\partial\Omega) \}}  \frac{|(\bu-\bspsi_0)(\bx) - (\bu-\bspsi_0)(\by)|^p}{|\bx-\by|^{n+sp}} \, \mathrm{d}\by \, \mathrm{d}\bx \Bigg) \\
            &\leq C(n,p,L) \alpha_s(p) \sum_{j=0}^k \sum_{B \in 2\mathcal{G}_j} \left( \frac{r_j}{r_k} \right)^{\beta-1} [\bu]_{\mathcal{X}^{s,p}(B)}^p.
        \end{aligned}
    \end{equation}
    Summing over $k \in \mathbb{N}_0$ and applying Fubini's theorem
    \begin{equation}
        \begin{aligned}
            \frac{\vnorm{\bu-\bspsi_0}_{L^p(\Omega)}^p}{1-s} + \int_{\Omega} & \int_{\{ |\by-\bx| \leq \eta \mathrm{dist}(\bx,\partial\Omega) \}} \frac{|(\bu-\bspsi_0)(\bx) - (\bu-\bspsi_0)(\by)|^p}{|\bx-\by|^{n+sp}} \, \mathrm{d}\by \, \mathrm{d}\bx \\
            &\leq C(n,p,L) \alpha_s(p)  \sum_{k=0}^{\infty} \sum_{j=0}^k \sum_{B \in 2\mathcal{G}_j} \left( \frac{r_j}{r_k} \right)^{\beta-1} [\bu]_{\mathcal{X}^{s,p}(B)}^p \\
            &= C(n,p,L) \alpha_s(p)  \sum_{j=0}^{\infty} r_j^{\beta-1} \left( \sum_{k=j}^{\infty} r_k^{1-\beta} \right) \sum_{B \in 2\mathcal{G}_j} [\bu]_{\mathcal{X}^{s,p}(B)}^p \\
            &\leq C(n,p,L) \alpha_s(p)  \sum_{j=0}^{\infty} \sum_{B \in 2\mathcal{G}_j} [\bu]_{\mathcal{X}^{s,p}(B)}^p \leq C(n,p,L) \alpha_s(p) [\bu]_{\mathcal{X}^{s,p}(\Omega)}^p,
        \end{aligned}
    \end{equation}
    where the final inequality follows from the local finiteness of $\bigcup_{j=0}^\infty 2\mathcal{G}_j$. 

    It is shown in \cite{Dyda} (see also \cite{HurriVahakangas,PratsSaksman}) that, when $\Omega$ is Lipschitz, the double integral on the left-hand side is equivalent to the $W^{s,p}$ seminorm $[\bu-\bspsi_0]_{W^{s,p}(\Omega)}^p$, with constant of comparison depending only on $n$, $p$, $L$, and $\eta$. We therefore conclude
    \begin{equation}
        (1-s)^{-1} \vnorm{\bu-\bspsi_0}_{L^p(\Omega)}^p + [\bu - \bspsi_0]_{W^{s,p}(\Omega)}^p \leq C(n,p,L) \alpha_s(p) [\bu]_{\mathcal{X}^{s,p}(\Omega)}^p.
    \end{equation}
    Last, we note that $\vnorm{\bspsi_0}_{\mathrm{aff}} \leq C(n,p,\bsgamma,L) \vnorm{\bu}_{L^p(B_0)}$ by \eqref{eq:AffineMapEquivalence:Rescaled} since $B_0(\bsgamma(0),L\mu)$, and $\mu$ depends only on $M_0$.
    This completes the proof.
\end{proof}

\begin{proof}[Proof of Theorem \ref{thm:mainKorn1st} and of Theorem \ref{thm:mainKorn1st:Endpoints} when $p = 1$]
	Cover $\partial \Omega$ by finitely many sets of the form (up to translation and rotation)
	\begin{equation*}
		\{ \bx = (\bx',x^n) : 0 < x^n < \beta(\bx'), \, |\bx'| < r \}, \quad r > 0
	\end{equation*}
    where $\beta$ is a Lipschitz function and where  
	each of these sets is a bi-Lipschitz image of $B(0,1)$. Combining these with open balls covering the interior of $\Omega$, we obtain a finite open cover $\{ \Omega_j \}_{j=1}^K$ of $\Omega$, $K$ depending on $\Omega$, such that each $\Omega_j$ is a bi-Lipschitz image of $B(0,1)$ with constant $L_j \ge 1$. Moreover, we enumerate $\{\Omega_j\}$ so that $\Omega_j \cap \Omega_{j+1} \neq \emptyset$ for $j = 1, \dots, K-1$.

	By Theorem \ref{thm:KornBiLipBall}, for each $j \in \{1, \dots, K\}$, there exists a rigid motion $\bspsi_j \in \mathcal{R}$ such that
	\begin{equation}\label{eq:LocalKornBiLip}  
    \begin{split}
        (1-s)\vnorm{\bu-\bspsi_j}_{L^p(\Omega_j)}^p + [\bu - \bspsi_j]_{W^{s,p}(\Omega_j)}^p &\leq C(n,p,L_j) \alpha_s(p) [\bu]_{\mathcal{X}^{s,p}(\Omega_j)}^p \\
        &\leq C(n,p,\Omega) \alpha_s(p) [\bu]_{\mathcal{X}^{s,p}(\Omega)}^p,
    \end{split}
	\end{equation}
	and $\|\bspsi_j\|_{\mathrm{aff}} \leq C(n,p,\Omega) \|\bu\|_{L^p(\Omega_j)} \le C \|\bu\|_{L^p(\Omega)}$.
    Thus
    \begin{equation*}
        \begin{split}
            [\bu-\bspsi_1]_{W^{s,p}(\Omega)} &\leq C(K) \sum_{j=1}^K [\bu-\bspsi_1]_{W^{s,p}(\Omega_j)} \\
            &\leq C(K) \sum_{j=1}^K \left( [\bu-\bspsi_j]_{W^{s,p}(\Omega_j)} + [\bspsi_{j}-\bspsi_1]_{W^{s,p}(\Omega_j)} \right) \\
            &\leq C(n,p,\Omega,K) \sum_{j=1}^K \left( [\bu-\bspsi_j]_{W^{s,p}(\Omega_j)} + \frac{1}{(1-s)^{1/p}} |\grad (\bspsi_{j}-\bspsi_1)| \right),
        \end{split}
    \end{equation*}
    where in the last inequality we used an estimation similar to that of the proof of \eqref{eq:NormCompEquiv}.
    Now, by \eqref{eq:NormCompEquiv} and Lemma \ref{lem:MatrixNormComp} applied on a ball $B_k \subset \Omega_k \cap \Omega_{k+1}$,
	\begin{equation}
    \begin{split}
        |\nabla \bspsi_{k+1} - \nabla \bspsi_k| &\leq C(n,p,\Omega) (1-s)^{1/p} [\bspsi_{k+1}-\bspsi_k]_{W^{s,p}(\Omega_k \cap \Omega_{k+1})} \\
        &\leq C(n,p,\Omega) (1-s)^{1/p} \left( [\bu - \bspsi_{k+1}]_{W^{s,p}(\Omega_{k+1})} + [\bu - \bspsi_{k}]_{W^{s,p}(\Omega_{k})} \right) \\
        &\leq C(n,p,\Omega)(1-s)^{1/p} \alpha_s(p)^{1/p} [\bu]_{\mathcal{X}^{s,p}(\Omega)} \quad \text{for any } k \in \{ 1, \dots, K-1 \}.
    \end{split}
	\end{equation}
    Therefore,
    \begin{equation*}
        |\grad(\bspsi_j - \bspsi_1)| \leq \sum_{k=1}^{j-1} |\grad(\bspsi_{k+1} - \bspsi_k)| \leq C(n,p,\Omega,K)(1-s)^{1/p} \alpha_s(p)^{1/p} [\bu]_{\mathcal{X}^{s,p}(\Omega)}.
    \end{equation*}
    In conclusion,
	\begin{equation*}
        \begin{split}
	    [\bu - \bspsi_1]_{W^{s,p}(\Omega)} 
            &\leq C(n,p,\Omega,K) \sum_{j=1}^K \left( [\bu-\bspsi_j]_{W^{s,p}(\Omega_j)} + \alpha_s(p)^{1/p} [\bu]_{\cX^{s,p}(\Omega)} \right) \\
            &\leq C(n,p,\Omega) \alpha_s(p)^{1/p} [\bu]_{\cX^{s,p}(\Omega)}.
        \end{split}
	\end{equation*}
	Thus, setting $\bspsi = \bspsi_1$, we  
    complete the proof for the seminorm.

    The bound for $(1-s)^{-1} \vnorm{\bu - \bspsi_1}_{L^p(\Omega)}$ follows in a similar way; since $\Omega \subset B(0,R_\Omega)$ for some $R_\Omega > 0$ depending only on $\Omega$,
    \begin{equation*}
        \begin{split}
            \vnorm{\bu-\bspsi_1}_{L^p(\Omega)} &\leq C(K) \sum_{j=1}^K \vnorm{\bu-\bspsi_1}_{L^p(\Omega_j)}  \\
            &\leq C(K) \sum_{j=1}^K \left( \vnorm{\bu-\bspsi_j}_{L^p(\Omega_j)}  + \vnorm{\bspsi_j-\bspsi_1}_{L^p(B(0,R_\Omega))}  \right).
        \end{split}
    \end{equation*}
    Now, by Lemma \ref{lem:AffineNormComp} applied on a ball $B_k \subset \Omega_k \cap \Omega_{k+1}$
    \begin{equation*}
    \begin{split}
        \vnorm{\bspsi_{k+1} - \bspsi_k}_{L^p(B(0,R_\Omega))} &\leq C(n,p,\Omega) \vnorm{\bspsi_{k+1} - \bspsi_k}_{L^p(\Omega_k \cap \Omega_{k+1})} \\
        &\leq C(n,p,\Omega) \big( \vnorm{\bu - \bspsi_{k+1}}_{L^p(\Omega_{k+1})} + \vnorm{\bu - \bspsi_{k}}_{L^p(\Omega_{k})} \big) \\
        &\leq C(n,p,\Omega) (1-s)^{1/p} \alpha_s(p)^{1/p} [\bu]_{\cX^{s,p}(\Omega)} \quad \text{ for any } k \in \{ 1,\ldots,K-1\}.
    \end{split}
    \end{equation*}
    Therefore,
    \begin{equation*}
        \vnorm{\bspsi_j-\bspsi_1}_{L^p(B(0,R_\Omega))} \leq \sum_{j=1}^{k-1} \vnorm{\bspsi_{k+1} - \bspsi_k}_{L^p(B(0,R_\Omega))} \leq C(n,p,K,\Omega) (1-s)^{1/p} \alpha_s(p)^{1/p} [\bu]_{\cX^{s,p}(\Omega)}
    \end{equation*}
    and so
    \begin{equation*}
        \begin{split}
	    (1-s)^{-1/p}\vnorm{\bu - \bspsi_1}_{L^p(\Omega)} 
            &\leq C(n,p,\Omega,K) \sum_{j=1}^K \left( (1-s)^{-1/p} \vnorm{\bu-\bspsi_j}_{L^{p}(\Omega_j)} + \alpha_s(p)^{1/p} [\bu]_{\cX^{s,p}(\Omega)} \right) \\
            &\leq C(n,p,\Omega) \alpha_s(p)^{1/p} [\bu]_{\cX^{s,p}(\Omega)}.
        \end{split}
	\end{equation*}
\end{proof}

\begin{proof}[Proof of Theorem \ref{thm:mainKorn1st:Endpoints} when $p = \infty$]
    In this setting we do not use the result of Theorem \ref{thm:KornBallStrong:Linfty}, but rather the techniques of the proofs leading to it. First, a reading of the proof of Lemma \ref{lma:Unscaled:Infty} shows that its conclusions remain true if the condition $B(\bx_0,r/2) \subset G \subset B(\bx_0,2r)$ is replaced with $B(\bx_0,\kappa r) \subset B(\bx_0,\Lambda r)$ for fixed $\kappa < 1 < \Lambda$ (the constant $C$ may additionally depend on $\kappa$ and $\Lambda$). Second, we show the following:
    \begin{equation}\label{eq:Lipschitz:ZoomIn}
        \begin{gathered}
            \text{There exists a constant } \kappa = \kappa(\Omega) > 0 \text{ such that, } \forall \bx_0 \in \Omega \text{ and } r \in (0,2\diam\Omega),\text{the set } \\
            B_{\bx_0,r} := \Omega \cap B(\bx_0,r) \text{ contains a ball of radius } \kappa r. \text{ Moreover, } B(\wt{\bx},\kappa r) \subset B_{\bx_0,r} \subset B(\wt{\bx},2 r).
        \end{gathered}
    \end{equation}
    Assuming \eqref{eq:Lipschitz:ZoomIn}, we prove the theorem. With Theorem \ref{thm:Scaled:Infty} in mind, denote $\bar{d} = 2 \diam(\Omega)$. For $\bx \in \Omega$ and $r \in (0,2d)$, thanks to \eqref{eq:Lipschitz:ZoomIn} we can use the modification of Lemma \ref{lma:Unscaled:Infty} to conclude that there exists $\bspsi_{\bx,r} \in \cR$ satisfying
    \begin{equation*}
        \vnorm{\bu - \bspsi_{\bx,r}}_{L^\infty(B_{\bx_0,r})} \leq C(n,\Omega) [\bu]_{\cK^s(\Omega)} r^s.
    \end{equation*}
    For $k \in \bbN_0$ abbreviate $\bspsi_k = \bspsi_{\bx_0,2^{-k}\bar{d}}$, with $\bspsi_k(\bx) = \bA_k \bx + \bfb_k$. Just as in the proof of Theorem \ref{thm:Scaled:Infty}, $|\bA_0 - \bA_k| \leq \frac{C(n,\Omega)}{1-2^{s-1}} [\bu]_{\cK^s(\Omega)} 2^{-k(s-1)}$.

    Now, let $\by \in \Omega$, and let $k \in \bbN_0$ such that $|\bx-\by| \in (2^{-k-1} \bar{d}, 2^{-k} \bar{d})$. Then $\bx$, $\by \in B_{\bx,2^{-k} \bar{d}}$, and just as in the conclusion of the proof of Theorem \ref{thm:Scaled:Infty}
    \begin{equation*}
        |\bu(\bx) - \bu(\by) - \bA_0(\bx-\by)| \leq 2 C(n,\Omega) [\bu]_{\cK^s(\Omega)} 2^{-ks} + |\bA_k - \bA_0| |\bx-\by| \leq \frac{C}{1-s} [\bu]_{\cK^s(\Omega)} |\bx-\by|^s.
    \end{equation*}
    An argument similar to that of the proof of Theorem \ref{thm:Korns1stOnBall:Linfty} shows that we can replace the rigid motion $\bA_0 \bx + \bfb_0$ with $\Pi_{B(\bx_0,r_0)}[\bu]$, where $B(\bx_0,r_0) \subset \Omega$ is any fixed Euclidean ball. Thus the theorem is proved.

    Now we must go back and prove \eqref{eq:Lipschitz:ZoomIn}. Let $\bx_0 \in \Omega$ and let $r \in (0,2 \bar{d})$. If $\dist(\bx_0,\p \Omega) \geq r/4$, choose $\wt{\bx} = \bx_0$ and $\kappa = 1/8$. If $\dist(\bx_0,\p \Omega) < r/4$, define $\bp(\bx_0) \in \p \Omega$ to be a nearest point projection. Then since $\Omega$ is a Lipschitz domain, it satisfies the uniform interior cone condition; there exists $\beta = \beta(\Omega) \in (0,1)$ and $\bsomega(\bx_0) \in \bbS^{n-1}$ such that $K_{\bsomega,\beta}(\bp(\bx_0),r/2)$, the open cone centered at $\bp(\bx_0)$ of radius $r/2$ with axis in the direction of $\bsomega$ and aperture width $2 \beta$, is contained in $\Omega \cap B(\bx_0,r)$ and moreover contains $\bx_0$. The parameter $\beta$ is independent of $\bx_0$ and $r$. Then for $\wt{\bx} = \bp(\bx_0) + \frac{r}{2(1+\sqrt{1-\beta^2})} \bsomega$ and $\kappa = \frac{\sqrt{1-\beta^2}}{\sqrt{1-\beta^2} +1}$ we have $B(\wt{\bx},\kappa r) \subset K_{\bsomega,\beta}(\bp(\bx_0),r/2)$ by elementary geometric observations and the triangle inequality. This choice of $\wt{\bx}$ and $\kappa$ ensures all the desired properties hold in the case $\dist(\bx_0,\p \Omega) < r/4$. Finally, to ensure \eqref{eq:Lipschitz:ZoomIn} holds for all $\bx_0 \in \Omega$ and $r \in (0,2 \bar{d})$, define $\kappa = \frac{1}{8} \frac{\sqrt{1-\beta^2}}{\sqrt{1-\beta^2} +1}$.
\end{proof}

We close this section by proving Theorem \ref{thm:mainKorn2nd}:

\begin{proof}[Proof of Theorem \ref{thm:mainKorn2nd}]
    We use \eqref{eq:mainKorn1stCase2:Proj} and \eqref{eq:mainKorn1stCase2:Proj:Endpoints}.
    For $p \in [1,\infty)$, by the triangle inequality, an estimate similar to that in \eqref{eq:NormCompEquiv}, 
    and the continuity assumption on $\Pi$, 
    \begin{equation}
        \begin{split}
            [\bu]_{W^{s,p}(\Omega)} &\leq [\bu-\Pi[\bu]]_{W^{s,p}(\Omega)} + [\Pi[\bu]]_{W^{s,p}(\Omega)} \\ &\leq C \alpha_s(p)^{1/p} [\bu]_{\mathcal{X}^{s,p}(\Omega)} + \frac{C}{(1-s)^{1/p}} \|\Pi[\bu]\|_{\mathrm{aff}} \\
            &\leq C \left( \alpha_s(p)^{1/p} [\bu]_{\mathcal{X}^{s,p}(\Omega)} + (1-s)^{-1/p} \|\bu\|_{L^p(\Omega)} \right).
        \end{split}
    \end{equation}    
    Thus \eqref{eq:mainKorn2nd} is proved for $p < \infty$. The argument is similar for $p = \infty$, with \eqref{eq:NormCompEquiv:Infty} used in place of \eqref{eq:NormCompEquiv}.
    In either case, adding $\|\bu\|_{L^p(\Omega)}$ to both sides yields $\|\bu\|_{W^{s,p}(\Omega)} \leq C \|\bu\|_{\mathcal{X}^{s,p}(\Omega)}$. The reverse inequality $\|\bu\|_{\mathcal{X}^{s,p}(\Omega)} \leq \|\bu\|_{W^{s,p}(\Omega)}$ is trivial since $\left| (\bu(\by)-\bu(\bx)) \cdot \frac{\by-\bx}{|\by-\bx|} \right| \leq |\bu(\by)-\bu(\bx)|$. Thus $\mathcal{X}^{s,p}(\Omega) = W^{s,p}(\Omega;\mathbb{R}^n)$.
\end{proof}

Finally, we remark on the consistency of the inequalities when $p \in (1,\infty)$ when $s \to 1$.
Using nonlocal characterizations of Sobolev functions and vector fields \cite{BourgainBrezisMironescu2001, Mengesha2012}, for any bounded Lipschitz domain $\Omega$ and for any $\bu \in W^{1,p}(\Omega;\bbR^n)$
\begin{equation*}
    (1-s)[\bu]_{W^{s,p}(\Omega)}^p \to \bar{C} \vnorm{\grad \bu}_{L^p(\Omega)}^p, \qquad \text{ and } (1-s)[\bu]_{\cX^{s,p}(\Omega)}^p \to \bar{C} \vnorm{\cL(e(\bu))}_{L^1(\Omega)}^p \qquad \text{ as } s \to 1,
\end{equation*}
where $\bar{C}(n,p)$ is a constant and where $\cL : \bbR^{n \times n} \to \bbR$ is a functional comparable to $|\cdot|^p$ on $\bbR^{n \times n}$.
Therefore, \eqref{eq:mainKorn1stCase2:Proj} becomes $\vnorm{\bu - \Pi[\bu]}_{W^{1,p}(\Omega)}^p \leq C \vnorm{e(\bu)}_{L^p(\Omega)}^p$
and \eqref{eq:mainKorn2nd} becomes $\vnorm{\grad \bu}_{L^p(\Omega)}^p \leq C \vnorm{e(\bu)}_{L^p(\Omega)}^p + C \vnorm{\bu}_{L^p(\Omega)}^p$ as $s \to 1$, for any $\bu \in W^{1,p}(\Omega;\bbR^n)$.

By minimizing $\vnorm{\bu - \bspsi}_{L^2(\Omega)}$ and $[\bu - \bspsi]_{W^{s,2}(\Omega)}$ over $\cR$, it is possible to show that $\Pi[\bu]$ can be replaced in \eqref{eq:mainKorn1stCase2:Proj} when $p = 2$ with the infinitesimal rigid motion
\begin{equation*}
    \bspsi(\bx) = \frac{1}{2}(\bA[\bu]-\bA[\bu]^T) \bx + \bfb[\bu],
\end{equation*}
where
\begin{equation*}
     \bA[\bu] := \left( \int_\Omega \int_\Omega \frac{ (\bx-\by) \otimes (\bx-\by) }{ |\bx-\by|^{n+2s} } \, \rmd \by \, \rmd \bx \right)^{-1} \int_\Omega \int_\Omega \frac{ (\bu(\bx)-\bu(\by)) \otimes (\bx-\by) }{ |\bx-\by|^{n+2s} } \, \rmd \by \, \rmd \bx
\end{equation*}
and
\begin{equation*}
    \bfb[\bu] = \fint_{\Omega} \bu(\bx) \, \rmd \bx - \frac{\bA[\bu]-\bA[\bu]^T}{2} \fint_{\Omega} \bx \, \rmd \bx.
\end{equation*}
In fact, this replacement can be made for any $p$; the operator $\Psi$ defined by $\Psi[\bu] : \bA[\bu]\bx + \bfb[\bu]$ is continuous from $W^{s,p}(\Omega;\bbR^n)$ to $\cR$, and so \eqref{eq:mainKorn2nd} will imply \eqref{eq:mainKorn1stCase2:Proj} for $\bu - \Psi[\bu]$ via a compactness argument.

\section{Proofs of the constrained Korn inequalities}\label{sec:subspaces}

In this section we prove Corollary \ref{cor:mainKorn1st:Constrained} and Propositions \ref{prop:ZeroTraceSubspace}--\ref{prop:LionsMagenesKorn}.

\begin{proof}[Proof of Corollary \ref{cor:mainKorn1st:Constrained}]
    To prove \eqref{eq:mainKorn1stCase1}, suppose by contradiction that there exists a sequence $\{\bu_k\} \subset \mathring{\cX}^{s,p}(\Omega) \cap W^{s,p}(\Omega)$ that satisfies
    \begin{equation}\label{eq:korn1stcase1:pfbound2}
        \vnorm{ \bu_k }_{W^{s,p}(\Omega)} = 1 \qquad \text{ and } \vnorm{ \bu_k }_{\cX^{s,p}(\Omega) } \leq 1/k, \qquad k \in \bbN.
    \end{equation}
    By a fractional analog  of the Rellich-Kondrachov compactness theorem \cite{DiNezza}, there exists a subsequence (not relabeled) $\bu_k \to \bu$ strongly in $L^p(\Omega;\mathbb{R}^n)$ for some $\bu \in W^{s,p}(\Omega) $. 
    We then have by Korn's second inequality \eqref{thm:mainKorn2nd} in   Theorem \ref{thm:mainKorn2nd} and by \eqref{eq:korn1stcase1:pfbound2} that
    \begin{equation*}
        \begin{split}
            \vnorm{\bu_k - \bu_m}_{W^{s,p}(\Omega)} &\leq C \left( \vnorm{\bu_k - \bu_m}_{L^p(\Omega)} + \vnorm{\bu_k - \bu_m}_{\cX^{s,p}(\Omega)} \right) \\
            &\leq C(1/k + 1/m + \vnorm{\bu_k - \bu_m}_{\cX^{s,p}(\Omega)}).
        \end{split}
    \end{equation*}
    Therefore the sequence $\{ \bu_k \}$ is Cauchy and thus convergent (to $\bu$) in $W^{s,p}(\Omega)$.
    So by \eqref{eq:korn1stcase1:pfbound2}
    \begin{equation*}
        [\bu]_{\cX^{s,p}(\Omega)} \leq [\bu_k]_{\cX^{s,p}(\Omega)} + [\bu_k - \bu]_{\cX^{s,p}(\Omega)} \leq 1/k + [\bu_k - \bu]_{W^{s,p}(\Omega)} \to 0
    \end{equation*}
    as $k \to \infty$. 
    Thus $[\bu]_{\mathcal{X}^{s,p}(\Omega)} = 0$, so $\bu \in \mathcal{R}$ \cite{Mengesha2012, Temam-Miran}.
    But then since $\bu \in \mathring{\cX}^{s,p}(\Omega)$ it follows that $\bu = 0$, a contradiction since $1 = \vnorm{\bu}_{W^{s,p}(\Omega)} = \lim\limits_{k \to \infty} \vnorm{\bu_k}_{W^{s,p}(\Omega)}$.
\end{proof}

\begin{proof}[Proof of Proposition \ref{prop:ZeroTraceSubspace}]
    For $sp > 1$, vector fields in $W^{s,p}(\Omega;\mathbb{R}^n)$ possess well-defined boundary traces in $L^p(\partial\Omega;\mathbb{R}^n)$. The only rigid motion $\bspsi(\bx) = \mathbf{A}\bx + \mathbf{b} \in \mathcal{R}$ satisfying $\bspsi|_\Gamma = \mathbf{0}$ on a set $\Gamma$ of positive surface measure is $\bspsi= \mathbf{0}$. Thus $\mathcal{X}^{s,p}_{0,\Gamma}(\Omega) \cap \mathcal{R} = \{\mathbf{0}\}$, so Corollary \ref{cor:mainKorn1st:Constrained} applies.
    For $sp \leq 1$, $W^{s,p}_0(\Omega) = W^{s,p}(\Omega)$ \cite{DiNezza, leoni2023first}, so non-trivial rigid motions belong to the space, causing \eqref{eq:mainKorn1stCase1} to fail.
\end{proof}

\begin{proof}[Proof of Proposition \ref{prop:PeridynamicCollar}]
    If $\bspsi(\bx) = \mathbf{A}\bx + \mathbf{b} \in \mathcal{R}$ vanishes a.e. on a set $\widetilde{\Omega}$ of positive Lebesgue measure, then $\mathbf{A} = \mathbf{0}$ and $\mathbf{b} = \mathbf{0}$. Thus $\widetilde{\mathcal{X}}^{s,p}_{\widetilde{\Omega}}(\Omega) \cap \mathcal{R} = \{\mathbf{0}\}$, so Corollary \ref{cor:mainKorn1st:Constrained} applies.
\end{proof}

\begin{proof}[Proof of Proposition \ref{prop:FunctionalConstraint}]
    If $\bspsi\in \dot{\mathcal{X}}^{s,p}(\Omega) \cap \mathcal{R}$, then $P[\bspsi] = \bspsi= \mathbf{0}$. Thus Corollary \ref{cor:mainKorn1st:Constrained} applies.
\end{proof}

\begin{proof}[Proof of Proposition \ref{prop:LionsMagenesKorn}]
    Let $\bu \in \mathcal{X}^{s,1/s}_{00}(\Omega)$ and let $\widetilde{\bu}$ be its extension by zero to $\mathbb{R}^n$. Applying the fractional Korn inequality on $\mathbb{R}^n$ \cite{MScott2018Korn}, we obtain $[\widetilde{\bu}]_{W^{s,1/s}(\mathbb{R}^n)} \leq C [\widetilde{\bu}]_{\mathcal{X}^{s,1/s}(\mathbb{R}^n)}$. Since $\|\bu\|_{W^{s,1/s}_{00}(\Omega)}^{1/s} \leq [\widetilde{\bu}]_{W^{s,1/s}(\mathbb{R}^n)}^{1/s}$ and $[\widetilde{\bu}]_{\mathcal{X}^{s,1/s}(\mathbb{R}^n)}^{1/s} \leq C \|\bu\|_{\mathcal{X}^{s,1/s}_{00}(\Omega)}^{1/s}$, the result follows.
\end{proof}

We close this section by proving the following inequality for vector fields defined on $\mathbb{R}^d$.  
\begin{corollary}\label{cor:Wegner-app}
    There exists a constant $C = C(n,p) > 0$ such that for all $\bu \in L^p_{\mathrm{loc}}(\mathbb{R}^n;\mathbb{R}^n)$,
    \begin{equation}\label{eq:Wegner-app}
        \sup_{B \subset \mathbb{R}^n} \inf_{\bspsi \in \mathcal{R}} [\bu - \bspsi]_{W^{s,p}(B)} \leq C [\bu]_{\mathcal{X}^{s,p}(\mathbb{R}^n)},
    \end{equation}
    where the supremum is taken over all Euclidean balls $B \subset \mathbb{R}^n$.
\end{corollary}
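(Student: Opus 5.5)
The plan is to apply the ball inequality, Theorem \ref{thm:KornBallStrong}, on each ball and then use monotonicity of the $\cX^{s,p}$ seminorm in the domain. Here $p\in(1,\infty)$ is implicit, since the constant $C(n,p)$ comes from Theorem \ref{thm:KornBallStrong}; for $p=1$ one would use Theorem \ref{thm:KornBallStrong:L1} instead, and the constant then carries an extra factor $(1-s)^{-1}$.

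First we may assume $[\bu]_{\cX^{s,p}(\bbR^n)}<\infty$, otherwise there is nothing to prove. Fix an arbitrary ball $B=B(\bx_0,R)$. Since $\bu\in L^p_{\mathrm{loc}}$, the restriction $\bu|_B$ lies in $L^p(B;\bbR^n)$. Because the integrand of \eqref{eq:NonlocalSymSeminormDef} is nonnegative and $B\times B\subset\bbR^n\times\bbR^n$,
\begin{equation*}
[\bu]_{\cX^{s,p}(B)}\le[\bu]_{\cX^{s,p}(\bbR^n)}<\infty ,
\end{equation*}
so $\bu|_B\in\cX^{s,p}(B)$. By Theorem \ref{thm:mainKorn2nd}, applied on the ball, it then also lies in $W^{s,p}(B;\bbR^n)$.

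Next, Theorem \ref{thm:KornBallStrong} with $\bspsi=\Pi_{B}[\bu]\in\cR$ gives
\begin{equation*}
(1-s)[\bu-\Pi_B[\bu]]^p_{W^{s,p}(B)}\le C(n,p)(1-s)[\bu]^p_{\cX^{s,p}(B)} .
\end{equation*}
Dividing by $(1-s)>0$ produces a constant independent of $s$, of $R$, and of $\bx_0$. The scale invariance is the one point to check: the $R^{-sp}$ factor in Theorem \ref{thm:KornBallStrong} multiplies only the $L^p$ term, which we discard, while the seminorm term carries no $R$-dependence. That this is legitimate follows from the scaling of both Gagliardo seminorms, which scale identically under $\bx\mapsto \bx_0+R\bx$ (each picks up the factor $R^{n-sp}$). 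Consequently
\begin{equation*}
\inf_{\bspsi\in\cR}[\bu-\bspsi]_{W^{s,p}(B)}\le[\bu-\Pi_B[\bu]]_{W^{s,p}(B)}\le C(n,p)^{1/p}[\bu]_{\cX^{s,p}(\bbR^n)} .
\end{equation*}

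Finally, taking the supremum over all balls yields \eqref{eq:Wegner-app}. I expect no real obstacle beyond verifying that the ball constant is indeed dilation- and translation-invariant, which is exactly what the rescaled statement, Theorem \ref{thm:KornBallStrong}, asserts.
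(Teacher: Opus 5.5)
Your proposal is correct and is essentially the paper's proof: apply Theorem \ref{thm:KornBallStrong} on an arbitrary ball with $\bspsi=\Pi_B[\bu]$, drop the $L^p$ term and divide by $1-s$, then use $[\bu]_{\cX^{s,p}(B)}\le[\bu]_{\cX^{s,p}(\bbR^n)}$ and take the supremum. The detour through Theorem \ref{thm:mainKorn2nd} is unnecessary, since Theorem \ref{thm:KornBallStrong} already applies to $\bu|_B\in\cX^{s,p}(B)$; your remark that the $s$-independent constant requires $p\in(1,\infty)$, while $p=1$ via Theorem \ref{thm:KornBallStrong:L1} picks up an extra factor $(1-s)^{-1}$, correctly makes explicit what the paper leaves implicit.
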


\begin{proof}
    Let $B = B(\bx_0, R) \subset \mathbb{R}^n$ be an arbitrary ball. By Theorem \ref{thm:KornBallStrong}, the projection operator $\Pi_B[\bu] \in \mathcal{R}$ satisfies
    \begin{equation}
        [\bu - \Pi_B[\bu]]_{W^{s,p}(B)} \leq C(n,p) [\bu]_{\mathcal{X}^{s,p}(B)},
    \end{equation}
    where the constant $C(n,p) > 0$ is independent of the center $\bx_0$ and radius $R$. Since $\Pi_B[\bu] \in \mathcal{R}$ and $[\bu]_{\mathcal{X}^{s,p}(B)} \leq [\bu]_{\mathcal{X}^{s,p}(\mathbb{R}^n)}$, we bound the infimum over $\mathcal{R}$ by choosing $\bspsi= \Pi_B[\bu]$:
    \begin{equation}
        \inf_{\bspsi\in \mathcal{R}} [\bu - \bspsi]_{W^{s,p}(B)} \leq [\bu - \Pi_B[\bu]]_{W^{s,p}(B)} \leq C(n,p) [\bu]_{\mathcal{X}^{s,p}(B)} \leq C(n,p) [\bu]_{\mathcal{X}^{s,p}(\mathbb{R}^n)}.
    \end{equation}
    Taking the supremum over all balls $B \subset \mathbb{R}^n$ completes the proof.
\end{proof}

\appendix
\section{Some auxilary results }
\begin{lemma}\label{lma:PositiveFunctional}
    Let $p \in [1,\infty)$. Define $\Phi : \bbS^{n-1} \times \overline{B(0,3)}  \to [0,\infty)$ via
    \begin{equation*}
        \Phi(\bsomega,\bsxi) := \int_{B(0,1)} \bb1_{ \{ |\by-\bsxi| \geq 1/8 \} } \left| \bsomega \cdot \frac{\bsxi - \by}{|\bsxi-\by|} \right|^p \, \rmd \by,
    \end{equation*}
    where $\mathds{1}_E$ denotes the indicator function on a measurable set $E$. 
    Then
    \begin{equation}\label{eq:UnscaledBdyCoercivity}
        \min_{ \bbS^{n-1} \times \overline{B(0,3)}} \Phi :=  C_1 > 0.
    \end{equation}
\end{lemma}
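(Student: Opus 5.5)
The plan is to show that $\Phi$ is continuous and strictly positive on the compact set $\bbS^{n-1} \times \overline{B(0,3)}$. Once both facts are in hand, the minimum is attained and therefore positive.

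\emph{Continuity.} Write
\[
\Phi(\bsomega,\bsxi) = \int_{\bbR^n} \bb1_{B(0,1)}(\by)\, \bb1_{\{|\by-\bsxi|\geq 1/8\}}\, \left|\bsomega \cdot \tfrac{\bsxi-\by}{|\bsxi-\by|}\right|^p \, \rmd \by .
\]
The integrand is bounded by $1$ and supported in $B(0,1)$.

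Fix $(\bsomega_k,\bsxi_k) \to (\bsomega,\bsxi)$. For every $\by$ outside the null set $\partial B(0,1) \cup \partial B(\bsxi,1/8)$, the integrand converges pointwise. Indeed, the indicator $\bb1_{\{|\by-\bsxi_k|\geq 1/8\}}$ eventually agrees with its limit whenever $|\by-\bsxi| \neq 1/8$. On the set $|\by-\bsxi|>1/8$ the unit vector $\frac{\bsxi_k-\by}{|\bsxi_k-\by|}$ is continuous in $\bsxi_k$. Dominated convergence then gives continuity of $\Phi$.

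\emph{Positivity.} I plan to show that the integration region
\[
G(\bsxi) := B(0,1) \setminus \overline{B(\bsxi,1/8)}
\]
always has positive measure. This is clear: $B(0,1)$ has measure $|B|$ and the removed ball has measure $8^{-n}|B| < |B|$.

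Suppose $\Phi(\bsomega,\bsxi)=0$. Then $\bsomega \cdot (\bsxi-\by) = 0$ for a.e. $\by \in G(\bsxi)$. That is, almost all of $G(\bsxi)$ lies in the affine hyperplane $\{\by : \bsomega\cdot\by = \bsomega\cdot\bsxi\}$, which has Lebesgue measure zero. This contradicts $|G(\bsxi)|>0$, so $\Phi>0$ pointwise.

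\emph{Conclusion and main obstacle.} Compactness of $\bbS^{n-1}\times\overline{B(0,3)}$ and continuity now give $\min \Phi = C_1 > 0$. The only step needing care is the continuity argument, namely the discontinuity of the indicator cutoff and the singularity of the unit vector at $\by = \bsxi$. Both are handled because they occur only on sets of measure zero, away from the cutoff region.

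One could instead give an explicit lower bound by integrating over a fixed cone of directions around $\pm\bsomega$ inside $G(\bsxi)$. The compactness route is simpler, and $C_1$ depends only on $n$ and $p$.
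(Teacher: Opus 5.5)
Your proposal is correct and follows essentially the same route as the paper: continuity of $\Phi$ by dominated convergence on a compact set, then positivity by contradiction, since $\Phi(\omega,\xi)=0$ would force $B(0,1)\cap\{|y-\xi|\ge 1/8\}$ to lie (up to a null set) in the hyperplane $\{y:\omega\cdot y=\omega\cdot\xi\}$. The only difference is cosmetic. You rule this out via the measure bound $|B(0,1)\setminus \overline{B(\xi,1/8)}|\ge (1-8^{-n})|B(0,1)|>0$, whereas the paper exhibits an explicit ball inside the region; your ``almost everywhere'' formulation is also slightly more careful than the paper's pointwise phrasing.
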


\begin{proof}
    By dominated convergence, $\Phi$ is a continuous function on a compact set, hence attains its minimum. $\Phi$ is nonnegative, so we argue by contradiction; assume that its minimum is $0$ attained for some $(\bsomega,\bsxi)$. Then 
    \begin{equation*}
        |\bsomega \cdot (\bsxi - \by)| = 0 \qquad \forall \by \in B(0,1) \cap S_{\bsxi},
    \end{equation*}
    where $S_{\bsxi} := \{ \by \in \bbR^n \, : \, |\by-\bsxi| \geq 1/8\}$. Therefore $B(0,1) \cap S_{\bsxi}$ must be contained in an $n-1$-dimensional hyperplane.
    However, $B(0,1) \cap S_{\bsxi}$ contains a Euclidean ball for all $\bsxi \in \overline{B(0,3)}$ (for instance, $B(0,1/8) \subset B(0,1) \cap S_{\bsxi}$ whenever $|\bsxi| > 1/4$, and $B(\frac{7}{8} \ba, \frac{1}{8}) \subset B(0,1) \cap S_{\bsxi}$ whenever $|\bsxi| \leq 1/4$, for any fixed unit vector $\ba$). This is a contradiction, and therefore $\Phi > 0$ on $\bbS^{n-1} \times \overline{B(0,3)}$.
\end{proof}

\end{document}